\tikzset{node distance=3cm, auto}
\newtheorem{theorem}{Theorem}[section]
\newtheorem{proposition}[theorem]{Proposition}
\newtheorem{corollary}[theorem]{Corollary}
\theoremstyle{definition}
\newtheorem{definition}[theorem]{Definition}
\numberwithin{equation}{section}
\def\N{\mathbb{N}}
\def\R{\mathbb{R}}
\def\C{\mathbb{C}}
\def\F{\mathcal{F}}
\def\G{\mathcal{G}}
\def\H{\mathcal{H}}
\def\I{\mathcal{I}}
\def\J{\mathcal{J}}
\def\K{\mathcal{K}}
\def\L{\mathcal{L}}
\def\Nu{\mathcal{N}}
\def\Hv{\H_{v}}
\def\HvKp{\H_{v\K_p}}
\def\HvWp{\H_{v\K_{wp}}}
\def\HvKup{\H_{v\K_{\text{u}p}}}
\def\QN{\mathcal{QN}}
\def\Gv{\G_{v}}
\def\At{\mathrm{At}}
\def\conv{\mathrm{conv}}
\def\lin{\mathrm{lin}}
\def\abco{\mathrm{abco}}
\def\rank{\mathrm{rank}}
\begin{document}

\title[Weighted holomorphic mappings associated with $p$-compact type sets]{Weighted holomorphic mappings associated \\ with $p$-compact type sets}

\author[M. G. Cabrera-Padilla]{M. G. Cabrera-Padilla}
\address[M. G. Cabrera-Padilla]{Departamento de Matem\'aticas, Universidad de Almer\'ia, Ctra. de Sacramento s/n, 04120 La Ca\~{n}ada de San Urbano, Almer\'ia, Spain.}
\email{m\_gador@hotmail.com}

\author[A. Jim{\'e}nez-Vargas]{A. Jim{\'e}nez-Vargas}
\address[A. Jim{\'e}nez-Vargas]{Departamento de Matem\'aticas, Universidad de Almer\'ia, Ctra. de Sacramento s/n, 04120 La Ca\~{n}ada de San Urbano, Almer\'ia, Spain.}
\email{ajimenez@ual.es}

\author[A. Keten \c Copur]{A. Keten \c Copur}
\address[A. Keten \c Copur]{Department of Mathematics and Computer Science, Faculty of Sciences, Necmettin Erbakan University, Konya, 42090, T\"{u}rkiye.}
\email{aketen@erbakan.edu.tr}

\thanks{Research partially supported by Junta de Andaluc\'ia (grant FQM194) and by Ministerio de Ciencia e Innovaci\'{o}n (grant PID2021-122126NB-C31 funded by MICIU/AEI/10.13039/501100011033 and by ERDF/EU) for the first two authors.}

\date{\today}

\subjclass[2020]{46E50, 46T25, 47B07, 47B10}

\keywords{Weighted holomorphic function, vector-valued holomorphic mapping, $p$-compact set, $p$-compact operator.}

\begin{abstract}
Given an open subset $U$ of a complex Banach space $E$, a weight $v$ on $U$, and a complex Banach space $F$, let $\H^\infty_v(U,F)$ denote the Banach space of all weighted holomorphic mappings $f\colon U\to F$, under the weighted supremum norm $\left\|f\right\|_v:=\sup\left\{v(x)\left\|f(x)\right\|\colon x\in U\right\}$. In this paper, we introduce and study the classes of weighted holomorphic mappings $\H^\infty_{v\K_{p}}(U,F)$ (resp., $\H^\infty_{v\K_{wp}}(U,F)$ and $\H^\infty_{v\K_{up}}(U,F)$) for which the set $(vf)(U)$ is relatively $p$-compact (resp., relatively weakly $p$-compact and relatively unconditionally $p$-compact). We prove that these mapping classes are characterized by $p$-compact (resp., weakly $p$-compact and unconditionally $p$-compact) linear operators defined on a Banach predual space of $\H^\infty_v(U)$ by linearization. 
We show that $\H^\infty_{v\K_{p}}$ (resp., $\H^\infty_{v\K_{wp}}$ and $\H^\infty_{v\K_{up}}$) is a Banach ideal of weighted holomorphic mappings which is generated by composition with the ideal of $p$-compact (resp., weakly $p$-compact and unconditionally $p$-compact) linear operators and contains the Banach ideal of all right $p$-nuclear weighted holomorphic mappings. We also prove that these weighted holomorphic mappings can be factorized through a quotient space of $l_{p^*}$, and $f\in\H^\infty_{v\K_{p}}(U,F)$ (resp., $f\in \H^\infty_{v\K_{up}}(U,F))$ if and only if its transposition $f^t$ is quasi $p$-nuclear (resp., quasi unconditionally $p$-nuclear).
\end{abstract}

\maketitle


\section*{Introduction}\label{section 0}

Grothendieck \cite{g} in 1955 characterized a relatively compact set in Banach spaces by means of sequences converging to zero, showing that compactness is also a geometric property. Inspired by Grothendieck's characterization, Sinha and Karn \cite{SinKar-02} defined the concept of a relatively (weakly) $p$-compact set, and, in parallel, a (weakly) $p$-compact linear operator. Kim \cite{Kim-14} introduced the notion of a relatively unconditionally $p$-compact set as a weaker version of a relatively $p$-compact set and a stronger version of a relatively compact set, and, in parallel, the concept of an unconditionally $p$-compact linear operator. The works \cite{SinKar-02, Kim-14} have inspired many papers exploring various aspects of linear operators associated with these sets (see \cite{DelPinSer-10, GalLasTur-12, Kim-17, Kim-19, Kim-20, LasTur-12, SinKar-08}), as well as Lipschitz operators (see \cite{AchDahTur-19, Ket-Inal}).

When $E$ and $F$ are Banach spaces and $U$ is an open subset on $E$,  linearization results assist to relate a class of holomorphic mappings defined on $U$ and valued in $F$ to the space of bounded linear operators from a certain Banach space $G(U)$ into $F$. This relation effectively connects a holomorphic mapping with a corresponding linear operator via linearization, thus allowing many properties of the linear case to be treated in the holomorphic case.

The study of weighted Banach spaces of holomorphic mappings and operators between them is a topic with a long tradition that continues to attract the attention of researchers today. The interested reader can find complete information on this important function space in the survey \cite{Bon-22} by Bonet and the references therein.

The weighted spaces of holomorphic mappings defined on an open subset of the finite dimensional space $\mathbb{C}^N$ ($N\in {\mathbb{N}}$) have been investigated in \cite{BieSum-93, BieSum-93-2, BonDomLin-01, Rub-Shie-70} while the infinite dimensional case has been addressed by \cite{Gar-Mae-Rue-00, GupBaw-16, Jorda-13}.

For this latter case, the research on holomorphic mappings with a relatively compact range was started in \cite{Muj-91} and continued in \cite{JimRuiSep-23}. A study on holomorphic mappings with a relatively $p$-compact range was conducted in \cite{Jim-23}. The weighted holomorphic mappings with a relatively compact $v$-range were briefly discussed in \cite{GupBaw-16}.

In this study, we consider classes of weighted holomorphic mappings associated to various types of $p$-compact sets, inspired by the studies \cite{GalLasTur-12, Kim-17, Kim-20, SinKar-02} and \cite{Jim-23, JimRuiSep-23} on different versions of bounded linear operators and holomorphic mappings, respectively.

This work consists of two parts divided in some subsections and is organized as follows. In Section \ref{section 1}, we provide the necessary notation and preliminary information for the study. In Section \ref{section 2}, we introduce and study classes of weighted holomorphic mappings with relatively (resp., relatively weakly, relatively unconditionally) $p$-compact $v$-range. 

To study the Banach ideal structure of these classes, we release and examine the ideal structure of weighted holomorphic mappings. We obtain important characterizations between the introduced classes of mappings and their linearizations. Using these characterizations, we derive factorization results for such classes and explore various characterizations and results connecting these mapping classes with their transpositions. 

We also deal with finite-dimensional $v$-rank and approximable weighted holomorphic mappings and show that a $p$-approximable (resp., $wp$-approximable, $up$-approximable) weighted holomorphic mapping has a relatively (resp., relatively weakly, relatively unconditionally) $p$-compact $v$-range. Finally, we present the class of right $p$-nuclear weighted holomorphic mappings and discuss the relationship between this class and its linearization. We also show that this class belongs to the class of weighted holomorphic maps with a relatively $p$-compact $v$-range.


\section{Notation and preliminaries}\label{section 1}

We will fix some notation and gather some necessary facts and concepts on both theory of weighted holomorphic mappings and theory of $p$-compact operators that will be used without any express mention along the paper.

We refer to Bierstedt and Summers \cite{BieSum-93}, Bonet, Domanski and Lindstr\"om \cite{BonDomLin-01,BonDomLin-99} and with Taskinen \cite{BonDomLinTas-98}, Gupta and Baweja \cite{GupBaw-16} and Mujica \cite{Muj-91} for the former theory, and to Delgado, Pi\~neiro and Serrano \cite{DelPinSer-10}, Galicer, Lassalle and Turco \cite{GalLasTur-12, LasTur-12}, Kim \cite{Kim-14,Kim-17,Kim-19,Kim-20} and Sinha and Karn \cite{SinKar-02,SinKar-08} for the latter theory. For the theory of operator ideals, we recommend the book \cite{Pie-80} by Pietsch.

\subsection{Notation}

As usual, $\mathbb{K}$ denotes the field $\mathbb{R}$ or $\mathbb{C}$. Let $E$ and $F$ be complex Banach spaces and let $U$ be an open subset of $E$.  Let $\H(U,F)$ be the space of all holomorphic mappings from $U$ into $F$. A \textit{weight} $v$ on $U$ is a (strictly) positive continuous function.

The \textit{space of weighted holomorphic mappings} $\H_v^\infty(U,F)$ is the Banach space of all mappings $f\in\H(U,F)$ such that   
$$
\left\|f\right\|_v:=\sup\left\{v(x)\left\|f(x)\right\|\colon x\in U\right\}<\infty ,
$$ 
endowed with the \textit{weighted supremum norm} $\left\|\cdot\right\|_v$. It is usual to write $\H^\infty_v(U)$ instead of $\H^\infty_v(U,\C)$. Moreover, $\H^\infty_{v\K}(U,F)$ and $\H^\infty_{v\K_{w}}(U,F)$ stand for the spaces of mappings $f\in\H(U,F)$ such that $vf$ has relatively compact range and relatively weakly compact range in $F$, respectively.

We denote by $\L(E,F)$ the Banach space of all bounded linear operators from $E$ into $F$, equipped with the operator canonical norm, and by $\F(E,F)$, $\K(E,F)$ and $\K_{w}(E,F)$ the subspaces of $\L(E,F)$ consisting of all finite-rank operators, compact operators and weakly compact operators from $E$ into $F$, respectively. As usual, $E^{*}$ and $B_E$ represent the topological dual space and the closed unit ball of $E$, respectively. Given a set $A\subseteq E$, $\overline{\lin}(A)$ and $\overline{\abco}(A)$ stand for the norm closed linear hull and the norm closed absolutely convex hull of $A$ in $E$. Given two normed spaces $(X,\left\|\cdot\right\|_X)$ and $(Y,\left\|\cdot\right\|_Y)$, the inequality $(X,\left\|\cdot\right\|_X)\leq (Y,\left\|\cdot\right\|_Y)$ will indicate that $X\subseteq Y$ and $\left\|x\right\|_Y\leq\left\|x\right\|_X$ for all $x\in X$.  

\subsection{Preliminaries on weighted holomorphic mappings}

Let $U$ be an open set of a complex Banach space $E$ and let $v$ be a weight on $U$. The key tool in our study is the space $\G_v^\infty(U)$, a Banach predual space of the space $\H_v^\infty(U)$. 

Following \cite{BieSum-93, GupBaw-16}, $\G_v^\infty(U)$ is the space of all linear functionals on $\H_v^\infty(U)$ whose restriction to $B_{\H_v^\infty(U)}$ is continuous for the compact-open topology. In fact, $\G_v^\infty(U)$ is a closed subspace of $\H_v^\infty(U)^*$, and the evaluation mapping $J_v\colon\H_v^\infty(U)\to\G_v^\infty(U)^*$, given by $J_v(f)(\phi)=\phi(f)$ for $\phi\in\G_v^\infty(U)$ and $f\in\H_v^\infty(U)$, is an isometric isomorphism.

For each $x\in U$, the \emph{evaluation functional} $\delta_x\colon\H^\infty_v(U)\to\mathbb{C}$, defined by $\delta_x(f)=f(x)$ for $f\in\H^\infty_v(U)$, is in $\G^\infty_v(U)$, and the map $\Delta_v\colon U\to\G_v^\infty(U)$ given by $\Delta_v(x)=\delta_x$ is in $\H_v^\infty(U,\G_v^\infty(U))$ with $\left\|\Delta_v\right\|_v\leq 1$.

By \emph{an atom of $\G^\infty_v(U)$} we mean an element of $\G^\infty_v(U)$ of the form $v(x)\delta_x$ for $x\in U$. The set of all atoms in $\G^\infty_v(U)$ will be denoted here by $\At_{\G^\infty_v(U)}$. It is known that $B_{\G^\infty_v(U)}$ coincides with $\overline{\abco}(\At_{\G^\infty_v(U)})\subseteq\H^\infty_v(U)^*$ and, consequently, $\G_v^\infty(U)$ with $\overline{\lin}(\At_{\G^\infty_v(U)})\subseteq\H_v^\infty(U)^*$. 

The space $\G_v^\infty(U)$ enjoys \emph{an universal property}: for every complex Banach space $F$ and every mapping $f\in\H_v^\infty(U,F)$, there exists a unique operator $T_f\in\L(\G_v^\infty(U),F)$ such that $T_f\circ\Delta_v=f$. Furthermore, $\left\|T_f\right\|=\left\|f\right\|_v$. As a consequence, the correspondence $f\mapsto T_f$ is an isometric isomorphism from $\H_v^\infty(U,F)$ onto $\L(\G_v^\infty(U),F)$ (resp., from $\H^\infty_{v\K}(U,F)$ onto $\K(\G_v^\infty(U),F)$, from $\H^\infty_{v\K_{w}}(U,F)$ onto $\K_w(\G_v^\infty(U),F))$. 

For each $f\in\H^\infty_v(U,F)$, the mapping $f^t\colon F^*\to\H^\infty_v(U)$, defined by $f^t(y^*)=y^*\circ f$ for all $y^*\in F^*$, is in $\L(F^*,\H^\infty_v(U))$ with $||f^t||=\left\|f\right\|_v$ and $f^t=J_v^{-1}\circ(T_f)^*$, where $(T_f)^*\colon F^*\to\G^\infty_v(U)^*$ is the adjoint operator of $T_f$.

\subsection{Preliminaries on $p$-compact operators}

Given a Banach space $E$ and $p\in [1,\infty)$, consider the Banach spaces of \textit{$p$-summable sequences, 
weakly $p$-summable sequences and unconditionally $p$-summable sequences} in $E$ defined, respectively, by
\begin{align*}
\ell_p(E)&=\left\{(x_n)\in E^\N\colon \left\|(x_n)\right\|_p:=\left(\sum_{n=1}^\infty\left\|x_n\right\|^p\right)^{\frac{1}{p}}<\infty\right\},\\
\ell^w_p(E)&=\left\{(x_n)\in E^\N\colon \left\|(x_n)\right\|^w_p:=\sup_{x^*\in B_{E^*}}\left\|(x^*(x_n))\right\|_p<\infty\right\},\\
\ell^u_p(E)&=\left\{(x_n)\in \ell^w_p(E)\colon \lim_{m\to\infty}\left\|(x_n)_{n\geq m}\right\|^w_p=0\right\}.
\end{align*}
In the case $E=\mathbb{K}$, we will write $\ell_p$, 
$\ell^w_p$ and $\ell^u_p$. The following relations are known:
$$
\left(\ell_p(E),\left\|\cdot\right\|_p\right)\leq\left(\ell^u_p(E),\left\|\cdot\right\|^w_p\right)\leq\left(\ell^w_p(E),\left\|\cdot\right\|^w_p\right).
$$
We also deal with the Banach spaces of \textit{norm null sequences} and \textit{weakly null sequences} in $E$,
\begin{align*}
c_0(E)&=\left\{(x_n)\in E^\N\colon \lim_{n\to\infty} \left\|x_n\right\|=0\right\},\\
c^w_0(E)&=\left\{(x_n)\in E^\N\colon \lim_{n\to\infty}\left|x^*(x_n)\right|=0,\; \forall x^*\in E^*\right\},
\end{align*}
equipped with the \textit{supremum norm}
$$
\left\|(x_n)\right\|_\infty=\sup_{n\in\N}\left\|x_n\right\|.
$$
For $p\in (1,\infty)$, the \textit{$p$-convex hull} of a sequence $(x_n)\in\ell_p(E)$ is defined by
$$
p\text{-}\conv(x_n)=\left\{\sum_{n=1}^\infty a_nx_n\colon (a_n)\in B_{\ell_{p^*}}\right\},
$$
where $p^*$ denotes the \textit{conjugate index of $p$}. Similarly, we set 
\begin{align*}
1\text{-}\conv(x_n)&=\left\{\sum_{n=1}^\infty a_nx_n\colon (a_n)\in B_{c_0}\right\},\ \ (x_n)\in \ell_1(E),\\
\infty\text{-}\conv(x_n)&=\left\{\sum_{n=1}^\infty a_nx_n\colon (a_n)\in B_{\ell_1}\right\},\ \ (x_n)\in c_0(E).
\end{align*}

Given $p\in [1,\infty]$, a set $K\subseteq E$ is said to be \textit{relatively $p$-compact (resp., relatively weakly $p$-compact, relatively unconditionally $p$-compact)} if there exists a sequence $(x_n)\in\ell_p(E)$ (resp., $(x_n)\in\ell^w_p(E)$, $(x_n)\in\ell^u_p(E)$) for $p\in [1,\infty)$, or $(x_n)\in c_0(E)$ (resp., $(x_n)\in c_0^w(E)$, $(x_n)\in c_0(E)$) for $p=\infty$, such that $K\subseteq p\text{-}\conv(x_n)$. 

Given Banach spaces $E,F$ and $p\in[1,\infty]$, an operator $T\in\L(E,F)$ is said to be \textit{$p$-compact (resp., weakly $p$-compact, unconditionally $p$-compact)} if $T(B_E)$ is a relatively $p$-compact (resp., relatively weakly $p$-compact, relatively unconditionally $p$-compact) subset of $F$. The space of all $p$-compact (resp., weakly $p$-compact, unconditionally $p$-compact) operators from $E$ into $F$ is denoted by $\K_p(E,F)$ (resp., $\K_{wp}(E,F)$, $\K_{up}(E,F)$). 

The sequence $(x_n)$ in the definition of a relatively $p$-compact set $K\subseteq E$ is not unique but, following to Lassalle and Turco \cite{LasTur-12} (see also \cite{DelPinSer-10}), a \textit{measure of the size of the $p$-compactness of $K$} comes given by 
$$
m_{\K_p}(K)=\left\{
\begin{array}{ll}
\inf\left\{\left\|(x_n)\right\|_p\colon (x_n)\in\ell_p(E), K\subseteq p\text{-}\conv(x_n)\right\} & \text{if}\; 1\leq p<\infty, \\
\inf\left\{\left\|(x_n)\right\|_p\colon (x_n)\in c_0(E), K\subseteq p\text{-}\conv(x_n)\right\} & \text{if}\; p=\infty .
\end{array}\right.
$$
Similarly, we can define 
$$
m_{\K_{wp}}(K)=\left\{
\begin{array}{ll}
\inf\left\{\left\|(x_n)\right\|^w_p\colon (x_n)\in\ell^w_p(E), K\subseteq p\text{-}\conv(x_n)\right\} & \text{if}\; 1\leq p<\infty, \\
\inf\left\{\left\|(x_n)\right\|_p\colon (x_n)\in c^w_0(E), K\subseteq p\text{-}\conv(x_n)\right\} & \text{if}\; p=\infty ,
\end{array}\right.
$$
and 
$$
m_{\K_{up}}(K)=\left\{
\begin{array}{ll}
\inf\left\{\left\|(x_n)\right\|^w_p\colon (x_n)\in\ell^u_p(E), K\subseteq p\text{-}\conv(x_n)\right\} & \text{if}\; 1\leq p<\infty, \\
\inf\left\{\left\|(x_n)\right\|_p\colon (x_n)\in c_0(E), K\subseteq p\text{-}\conv(x_n)\right\} & \text{if}\; p=\infty ,
\end{array}\right.
$$
whenever $K\subseteq E$ is relatively weakly $p$-compact or relatively unconditionally $p$-compact, respectively.

Considering for $i=p,wp,up$ and $p\in [1,\infty]$, the norm
$$
\left\|T\right\|_{\K_i}=m_{\K_i}(T(B_E))\qquad (T\in\K_i(E,F)),
$$
it is known that $[\K_i,\left\|\cdot\right\|_{\K_i}]$ is a Banach operator ideal (see \cite[Theorems 4.1 and 4.2]{SinKar-02}, \cite[Proposition 3.15]{DelPinSer-10}, \cite[Theorem 2.1 and Remark 2.3]{Kim-20} and \cite[Theorem 2.1]{Kim-14}), and the following relations hold whenever $1<p\leq\infty$:
\begin{align*}
[\K_p,\left\|\cdot\right\|_{\K_p}]&\leq[\K_{up},\left\|\cdot\right\|_{\K_{up}}]\leq [\K_{u\infty},\left\|\cdot\right\|_{\K_{u\infty}}]=[\K,\left\|\cdot\right\|]=[\K_\infty,\left\|\cdot\right\|_{\K_\infty}],\\
[\K_{up},\left\|\cdot\right\|_{\K_{up}}]&\leq[\K_{wp},\left\|\cdot\right\|_{\K_{wp}}]\leq [\K_w,\left\|\cdot\right\|].
\end{align*}

With these concepts, we are ready to present the classes of weighted holomorphic mappings we will be dealing with. 


\section{Results}\label{section 2}

From now on, unless otherwise stated, $E$ and $F$ will denote complex Banach spaces, $U$ will be an open subset of $E$, $v$ will be a weight on $U$ and $p\in\left[1,\infty\right]$. 

We will study different classes of weighted holomorphic mappings $f\colon U\to F$ whose $v$-range, that is, the set
$$
(vf)(U)=\left\{v(x)f(x)\colon x\in U\right\},
$$
satisfies some type of $p$-compactness. To be more precise, we will focus on the following spaces.

\begin{definition}
We define
\begin{align*}
\H_{v\K_p}^\infty(U,F)&=\left\{f\in\H_{v}^{\infty}(U,F)\colon (vf)(U) \text{ is a relatively $p$-compact subset of }F\right\},\\
\H_{v\K_{wp}}^\infty(U,F)&=\left\{f\in\H_{v}^{\infty}(U,F)\colon (vf)(U) \text{ is a relatively weakly $p$-compact subset of }F\right\},\\
\H_{v\K_{up}}^\infty(U,F)&=\left\{f\in \H_{v}^{\infty}(U,F)\colon (vf)(U) \text{ is a relatively unconditionally $p$-compact subset of }F\right\}.
\end{align*}
For $\I=\K_p,\K_{wp},\K_{up}$ and $f\in\H_{v\I}^\infty(U,F)$, we set 
$$
\left\|f\right\|_{\H_{v\I}^\infty}=m_\I((vf)(U)).
$$
We will say that $(vf)(U)$ \emph{has the $\I$-property} if $f\in\H_{v\I}^\infty(U,F)$ for $\I=\K_p,\K_{wp},\K_{up}$.
\end{definition}


Our approach in this paper covers some of these spaces because in view of \cite[p. 20]{SinKar-02} and \cite[p. 135]{Kim-14}, both relatively $\infty$-compact sets and relatively unconditionally $\infty$-compact sets in a Banach space are precisely the relatively compact sets, and thus we have the following result.

\begin{proposition}\label{new}
$(\H_{v\K_\infty}^\infty(U,F),\left\|\cdot\right\|_{\H_{v\K_\infty}^\infty})=(\H_{v\K_{u\infty}}^\infty(U,F),\left\|\cdot\right\|_{\H_{v\K_{u\infty}}^\infty})=(\H_{v\K}^\infty(U,F),\|\cdot\|_v)$. $\hfill\Box$
\end{proposition}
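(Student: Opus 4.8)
The plan is to show the three spaces coincide as sets and that the norms agree, by invoking the already-stated fact that relatively $\infty$-compact sets and relatively unconditionally $\infty$-compact sets in a Banach space are exactly the relatively compact sets. First I would unwind the definitions: a mapping $f\in\H_v^\infty(U,F)$ lies in $\H_{v\K_\infty}^\infty(U,F)$ (resp. $\H_{v\K_{u\infty}}^\infty(U,F)$) precisely when $(vf)(U)$ is relatively $\infty$-compact (resp. relatively unconditionally $\infty$-compact) in $F$, and it lies in $\H_{v\K}^\infty(U,F)$ precisely when $(vf)(U)$ is relatively compact in $F$. By the cited equivalence of these three types of subsets (from \cite[p.~20]{SinKar-02} and \cite[p.~135]{Kim-14}), the three membership conditions are the same, so the underlying sets of mappings coincide.

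Next I would check the norms. For $\I=\K_\infty$ and $\I=\K_{u\infty}$ the value $\left\|f\right\|_{\H_{v\I}^\infty}=m_\I((vf)(U))$ is, by definition, an infimum of $\left\|(x_n)\right\|_\infty$ over sequences $(x_n)\in c_0(F)$ with $(vf)(U)\subseteq\infty\text{-}\conv(x_n)$ (note that for $p=\infty$ the defining sequence space is $c_0(F)$ in both the $\K_\infty$ and $\K_{u\infty}$ cases, so even the measures $m_{\K_\infty}$ and $m_{\K_{u\infty}}$ are literally the same functional). Hence $\left\|f\right\|_{\H_{v\K_\infty}^\infty}=\left\|f\right\|_{\H_{v\K_{u\infty}}^\infty}$ is immediate. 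It remains to identify this common quantity with $\left\|f\right\|_v=\sup\{v(x)\left\|f(x)\right\|\colon x\in U\}$. One inequality is easy: since $(vf)(U)$ is contained in the balanced convex hull generated by any admissible $(x_n)$, every point $v(x)f(x)$ is an $\ell_1$-combination of the $x_n$'s of coefficients summing to at most $1$, so $v(x)\left\|f(x)\right\|\le\sup_n\left\|x_n\right\|=\left\|(x_n)\right\|_\infty$, giving $\left\|f\right\|_v\le m_{\K_\infty}((vf)(U))$.

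For the reverse inequality, $m_{\K_\infty}((vf)(U))\le\left\|f\right\|_v$, I would use that a relatively compact set $K$ in a Banach space, Grothendieck's classical result, sits inside $\infty\text{-}\conv(x_n)$ for a suitable null sequence $(x_n)$; the quantitative version I want is that $K$ can be covered with control $\left\|(x_n)\right\|_\infty$ arbitrarily close to (in fact, equal to, or within any $\varepsilon$ of) the quantity $\sup_{x\in K}\left\|x\right\|$. The cleanest route is to apply the universal property: the linearization $T_f\in\L(\G_v^\infty(U),F)$ satisfies $\left\|T_f\right\|=\left\|f\right\|_v$, and $T_f$ is compact with $(vf)(U)\subseteq T_f(B_{\G_v^\infty(U)})$ since $B_{\G_v^\infty(U)}=\overline{\abco}(\At_{\G_v^\infty(U)})$ and $T_f(v(x)\delta_x)=v(x)f(x)$. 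Then the operator-ideal identity $[\K_{u\infty},\left\|\cdot\right\|_{\K_{u\infty}}]=[\K,\left\|\cdot\right\|]$ from the displayed relations gives $m_{\K_\infty}(T_f(B_{\G_v^\infty(U)}))=\left\|T_f\right\|_{\K_\infty}=\left\|T_f\right\|=\left\|f\right\|_v$, and since $(vf)(U)\subseteq T_f(B_{\G_v^\infty(U)})$ and $m_{\K_\infty}$ is monotone under inclusion, $m_{\K_\infty}((vf)(U))\le\left\|f\right\|_v$. Combined with the previous paragraph this yields equality of all three norms.

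The only mild subtlety, and the step I would treat most carefully, is the monotonicity $m_{\K_\infty}(K_1)\le m_{\K_\infty}(K_2)$ when $K_1\subseteq K_2$: this is clear from the definition since any sequence witnessing the $\infty$-compactness of $K_2$ also witnesses it for $K_1$. There is really no serious obstacle here; the proposition is essentially a bookkeeping consequence of the cited set-level coincidences together with the isometric linearization $\H_v^\infty(U,F)\cong\L(\G_v^\infty(U),F)$ (and its restrictions to the compact case) already recorded in Section~\ref{section 1}. An alternative, equally valid, is to avoid linearization entirely and cite directly the known fact that for a relatively compact $K$ one has $m_{\K_\infty}(K)=\sup_{x\in K}\left\|x\right\|$ (the quantitative form of Grothendieck's lemma), from which the norm equality is immediate; I would probably present the linearization argument since the predual $\G_v^\infty(U)$ is already the workhorse of the paper.
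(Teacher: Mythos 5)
Your proposal is correct and matches the paper's (unwritten) argument: the paper states the proposition with no proof, treating it as immediate from the cited facts that relatively $\infty$-compact and relatively unconditionally $\infty$-compact sets are exactly the relatively compact sets, which is precisely your set-level step. Your additional verification of the norm identity $m_{\K_\infty}((vf)(U))=\left\|f\right\|_v$ via the linearization $T_f$ and the identity $[\K_\infty,\left\|\cdot\right\|_{\K_\infty}]=[\K,\left\|\cdot\right\|]$ is a sound fleshing-out of what the paper leaves implicit, and needs no changes.
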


Considering the relations of inclusion between the three types of relatively $p$-compact sets considered, we easily get the following. 

\begin{proposition}
$[\HvKp^\infty,\left\|\cdot\right\|_{\H_{v\K_p}^\infty}]\leq [\HvKup^\infty,\left\|\cdot\right\|_{\H_{v\K_{up}}^\infty}]\leq [\HvWp^\infty,\left\|\cdot\right\|_{\H_{v\K_{wp}}^\infty}]$ for any $p\in [1,\infty]$; and $[\H_{v\K_p}^\infty,\left\|\cdot\right\|_{\H_{v\K_p}^\infty}]\leq [\H_{v\K_q}^\infty,\left\|\cdot\right\|_{\H_{v\K_q}^\infty}]$ and $
[\H_{v\K_{wp}}^\infty,\left\|\cdot\right\|_{\H_{v\K_{wp}}^\infty}]\leq [\H_{v\K_{wq}}^\infty,\left\|\cdot\right\|_{\H_{v\K_{wq}}^\infty}]$ if $1\leq p\leq q\leq\infty$. 
$\hfill\Box$
\end{proposition}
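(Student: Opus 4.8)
The plan is to reduce everything to the known inclusion relations between the three scales of $p$-compact sets and, for the monotonicity in $p$, the elementary containments between $p\text{-}\conv$ hulls. The first assertion follows by applying, at the level of the single set $(vf)(U)$, the set-theoretic chain: every relatively $p$-compact set is relatively unconditionally $p$-compact, and every relatively unconditionally $p$-compact set is relatively weakly $p$-compact. Concretely, if $(vf)(U)\subseteq p\text{-}\conv(x_n)$ with $(x_n)\in\ell_p(F)$, then since $(\ell_p(F),\|\cdot\|_p)\leq(\ell^u_p(F),\|\cdot\|^w_p)$ we have $(x_n)\in\ell^u_p(F)$ with $\|(x_n)\|^w_p\leq\|(x_n)\|_p$, and the \emph{same} $p$-convex hull witnesses unconditional $p$-compactness; taking infima over admissible sequences gives $m_{\K_{up}}((vf)(U))\leq m_{\K_p}((vf)(U))$, i.e. $f\in\HvKup^\infty(U,F)$ with $\|f\|_{\H_{v\K_{up}}^\infty}\leq\|f\|_{\H_{v\K_p}^\infty}$. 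The step from $up$ to $wp$ is identical, using $(\ell^u_p(F),\|\cdot\|^w_p)\leq(\ell^w_p(F),\|\cdot\|^w_p)$. For $p=\infty$ one invokes Proposition \ref{new}, which collapses the first two spaces to $\H^\infty_{v\K}(U,F)$, and the $\infty$-case of the third scale is handled the same way via $c_0(F)\subseteq c^w_0(F)$ with equal supremum norms.

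For the second and third assertions (monotonicity in the summability index $p\leq q$), I would argue via the inclusion of convex hulls. If $1\leq p\leq q<\infty$ and $(x_n)\in\ell_p(F)$ with $(vf)(U)\subseteq p\text{-}\conv(x_n)$, set $\lambda=\|(x_n)\|_p$ and consider the rescaled sequence; the point is that $B_{\ell_{q^*}}\subseteq B_{\ell_{p^*}}$ (since $q^*\leq p^*$) forces $q\text{-}\conv(x_n)\subseteq p\text{-}\conv(x_n)$ — wait, that inclusion goes the wrong way, so instead one uses that $\ell_p(F)\subseteq\ell_q(F)$ with $\|(x_n)\|_q\leq\|(x_n)\|_p$, and that for a \emph{fixed} sequence $p\text{-}\conv(x_n)\subseteq q\text{-}\conv(x_n)$ because $B_{\ell_{p^*}}\subseteq B_{\ell_{q^*}}$. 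Hence $(vf)(U)\subseteq p\text{-}\conv(x_n)\subseteq q\text{-}\conv(x_n)$ with $(x_n)\in\ell_q(F)$ and $\|(x_n)\|_q\leq\|(x_n)\|_p$, so $m_{\K_q}((vf)(U))\leq m_{\K_p}((vf)(U))$. For the endpoint $q=\infty$ one uses $\ell_p(F)\subseteq c_0(F)$ with $\|(x_n)\|_\infty\leq\|(x_n)\|_p$ and $p\text{-}\conv(x_n)\subseteq\infty\text{-}\conv(x_n)$, since $B_{\ell_{p^*}}\subseteq B_{\ell_1}$. The weak version is verbatim the same with $\ell^w_p(F)$ and the norm $\|\cdot\|^w_p$ in place of $\ell_p(F)$ and $\|\cdot\|_p$, and with $c^w_0(F)$ at the endpoint. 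In all cases one also checks the trivial fact that the ambient spaces satisfy $\H^\infty_{v\K_p}(U,F)\subseteq\H^\infty_{v\K_q}(U,F)\subseteq\H^\infty_v(U,F)$, which is immediate from the inclusions just used together with $\|\cdot\|_v=\|f\|_{\H^\infty_{v\K_\infty}}$ being dominated by every $m_{\K_i}$.

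I do not anticipate a genuine obstacle here: the statement is exactly the ``lift to the single range set'' of the operator-ideal inclusions recorded just before Section \ref{section 2}, since $f\in\H^\infty_{v\I}(U,F)$ is by definition a property of the one set $(vf)(U)\subseteq F$, and the measures $m_{\K_i}$ are defined by the identical infima used to define the operator-ideal norms $\|\cdot\|_{\K_i}$. The only mild care needed is keeping the direction of the hull inclusions straight (the conjugate index reverses inequalities, so $p\leq q$ gives $p^*\geq q^*$ and therefore $B_{\ell_{p^*}}\subseteq B_{\ell_{q^*}}$, hence the smaller-index convex hull is \emph{contained} in the larger-index one) and treating the $p=\infty$ or $q=\infty$ endpoints separately through Proposition \ref{new} and the $c_0$/$c^w_0$ definitions. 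Because all of this is routine, I would present it compactly, perhaps remarking that it parallels the proof of the displayed operator-ideal relations cited from \cite{SinKar-02,DelPinSer-10,Kim-14,Kim-20}.
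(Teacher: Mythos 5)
Your first chain of inequalities is argued correctly and along exactly the route the paper intends: for a fixed index $p$ the witnessing sequence and the hull $p\text{-}\conv(x_n)$ are left unchanged, and the relations $(\ell_p(F),\|\cdot\|_p)\leq(\ell^u_p(F),\|\cdot\|^w_p)\leq(\ell^w_p(F),\|\cdot\|^w_p)$ give simultaneously the containments $\HvKp^\infty\subseteq\HvKup^\infty\subseteq\HvWp^\infty$ and the inequalities between the measures $m_{\K_p}$, $m_{\K_{up}}$, $m_{\K_{wp}}$ of the single set $(vf)(U)$; the case $p=\infty$ via Proposition \ref{new} is also fine.

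The monotonicity part, however, contains a genuine gap. For $p\leq q$ one has $p^*\geq q^*$, hence $B_{\ell_{q^*}}\subseteq B_{\ell_{p^*}}$ and therefore $q\text{-}\conv(x_n)\subseteq p\text{-}\conv(x_n)$ for a \emph{fixed} sequence --- this is precisely the inclusion you first wrote down and then discarded; the ``fix'' you adopt, namely $B_{\ell_{p^*}}\subseteq B_{\ell_{q^*}}$ and $p\text{-}\conv(x_n)\subseteq q\text{-}\conv(x_n)$, is false (for $p=1$, $q=2$ the vector $(1,\dots,1,0,\dots)$ lies in $B_{c_0}$ but not in $B_{\ell_2}$), and similarly $B_{\ell_{p^*}}\not\subseteq B_{\ell_1}$ at the endpoint $q=\infty$. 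So the same-sequence argument cannot yield $m_{\K_q}((vf)(U))\leq m_{\K_p}((vf)(U))$. For the scale $\K_p$ the assertion is nevertheless true and can be repaired by a rescaling plus H\"older argument: discarding the zero terms, set $z_n=\|(x_m)\|_p^{1-p/q}\,\|x_n\|^{(p/q)-1}x_n$; then $(z_n)\in\ell_q(F)$ with $\|(z_n)\|_q=\|(x_n)\|_p$, and H\"older with exponents $p^*/q^*$ and its conjugate shows that every $\sum_n a_nx_n$ with $(a_n)\in B_{\ell_{p^*}}$ rewrites as $\sum_n b_nz_n$ with $(b_n)\in B_{\ell_{q^*}}$, whence $p\text{-}\conv(x_n)\subseteq q\text{-}\conv(z_n)$ and the measure inequality follows (for $q=\infty$ use instead that the $p$-convex hull of an $\ell_p$-sequence is relatively compact, not a hull inclusion). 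Be warned that the weak scale is \emph{not} ``verbatim the same'': the rescaling uses the norms $\|x_n\|$, which are not controlled by $\|\cdot\|^w_p$, and no analogue of this trick is available. In fact, for $1<p<q<\infty$ the ball $B_{\ell_{p^*}}$ is weakly $p$-compact in $\ell_{p^*}$ (take $x_n=e_n$), while a containment $B_{\ell_{p^*}}\subseteq q\text{-}\conv(z_n)$ with $(z_n)\in\ell^w_q(\ell_{p^*})$ would realize $\ell_{p^*}$ as a quotient of $\ell_{q^*}$ and hence embed $\ell_p$ isomorphically into $\ell_q$, which is impossible; so the $\K_{wp}$-monotonicity cannot be obtained by any argument of the kind you propose and needs a separate, carefully referenced justification. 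As written, your proof establishes the first chain but not the statements for $p\leq q$.
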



\subsection{Banach ideal property}

Our next goal is to study the Banach ideal structure of $[\H^\infty_{v\I},\left\|\cdot\right\|_{\H_{v\I}^\infty}]$ for $\I=\K_p,\K_{wp},\K_{up}$. Influenced by the concept of Banach operator ideal (see \cite{Pie-80}), the following type of ideals was considered in \cite[Definition 2.3]{CabJimRui-23} for $v=1_U$ (the function constantly $1$ on $U$). 

\begin{definition}\label{def-weighted holomorphic ideal} 
A \textit{normed (Banach) ideal of weighted holomorphic mappings} (or simply, a \textit{normed (Banach) weighted holomorphic ideal}) is an assignment $[\I^{\Hv^\infty},\left\|\cdot\right\|_{\I^{\Hv^\infty}}]$ which associates with every pair $(U,F)$, where $U$ is an open subset of a complex Banach space $E$ and $F$ is a complex Banach space, a subset $\I^{\Hv^\infty}(U,F)$ of $\Hv^\infty(U,F)$ and a function $\|\cdot\|_{\I^{\Hv^\infty}}\colon\I^{\Hv^\infty}(U,F)\to\R$ such that the following conditions are satisfied
\begin{enumerate}
\item[(P1)] $\left(\I^{\Hv^\infty}(U,F),\|\cdot\|_{\I^{\Hv^\infty}}\right)$ is a normed (Banach) space with $\|f\|_v\leq \|f\|_{\I^{\Hv^\infty}}$ if $f\in\I^{\Hv^\infty}(U,F)$, 
\item[(P2)] For $h\in\Hv^\infty(U)$ and $y\in F$, the map $h\cdot y\colon x\mapsto h(x)y$ from $U$ to $F$ is in $\I^{\Hv^\infty}(U,F)$ with $\|h\cdot y\|_{\I^{\Hv^\infty}}=\|h\|_v||y||$,
\item[(P3)] \textit{The ideal property}: if $V$ is an open subset of $E$ such that $V\subseteq U$, $h\in\H(V,U)$ with $c_v(h):=\sup_{x\in V}(v(x)/v(h(x)))<\infty$, $f\in\I^{\Hv^\infty}(U,F)$ and $S\in\L(F,G_0)$ where $G_0$ is a complex Banach space, then $S\circ f\circ h\in\I^{\Hv^\infty}(V,G_0)$ with $\|S\circ f\circ h\|_{\I^{\Hv^\infty}}\leq\left\|S\right\|\|f\|_{\I^{\Hv^\infty}}c_v(h)$.

\end{enumerate}

We must point out that the condition imposed on $h$ in (P3) appears usually in the study of composition operators between spaces of weighted holomorphic mappings (see, for example, \cite[Theorem 4]{BonDomLin-99} and \cite[Proposition 2.1]{BonDomLinTas-98}).

Given two normed weighted holomorphic ideals $[\I^{\Hv^\infty},\left\|\cdot\right\|_{\I^{\Hv^\infty}}]$ and $[\J^{\Hv^\infty},\left\|\cdot\right\|_{\J^{\Hv^\infty}}]$, we will write 
$$
[\I^{\Hv^\infty},\left\|\cdot\right\|_{\I^{\Hv^\infty}}]\leq [\J^{\Hv^\infty},\left\|\cdot\right\|_{\J^{\Hv^\infty}}]
$$
to indicate that for any complex Banach space $E$, any open subset $U$ of $E$ and any complex Banach space $F$, one has $\I^{\Hv^\infty}(U,F)\subseteq\J^{\Hv^\infty}(U,F)$ and $\left\|f\right\|_{\J^{\Hv^\infty}}\leq \left\|f\right\|_{\I^{\Hv^\infty}}$ for all $f\in\I^{\Hv^\infty}(U,F)$. 
\end{definition}

To construct new ideals of weighted holomorphic mappings, we introduce in this setting the composition method with an operator ideal. This has been a standard method to yield new ideals of function spaces in different contexts as, for example, spaces of Lipschitz functions \cite{AchDahTur-19}, spaces of polynomials and holomorphic functions \cite{AroBotPelRue-10}, spaces of polynomial and multilinear operators \cite{BotPelRue-07}, and spaces of bounded holomorphic functions \cite{CabJimRui-23}, among others. 

\begin{definition}\label{composition ideal}
Given a normed operator ideal $[\I,\|\cdot\|_\I]$, a mapping $f\in\Hv^\infty(U,F)$ \textit{belongs to the composition ideal $\I\circ\Hv^\infty$}, and we write $f\in\I\circ\Hv^\infty(U,F)$, if there are a complex Banach space $G$, an operator $T\in\I(G,F)$ and a mapping $g\in\Hv^\infty(U,G)$ such that $f=T\circ g$, that is, the following diagram commutes
$$
\begin{tikzpicture}
  \node (U) {$U$};
  \node (F) [right of=U] {$F$};
  \node (G) [below of=U] {$G$};
  \draw[->] (U) to node {$f$} (F);
  \draw[->] (U) to node [swap] {$g$} (G);
  \draw[->] (G) to node [swap] {$T$} (F);
\end{tikzpicture}
$$
If $f\in\I\circ\Hv^\infty(U,F)$, we denote $\|f\|_{\I\circ\Hv^\infty}=\inf\left\{\|T\|_\I\|g\|_v\right\}$, where the infimum is extended over all factorizations of $f$ as above.
\end{definition}

It is easy to show that $\I\circ\Hv^\infty$ is in fact a weighted holomorphic ideal.

\begin{proposition}\label{ideal comp}
Let $[\I,\|\cdot\|_\I]$ be a normed operator ideal. Then $[\I\circ\Hv^\infty,\|\cdot\|_{\I\circ\Hv^\infty}]$ is a normed weighted holomorphic ideal.
\end{proposition}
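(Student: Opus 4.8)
The goal is to verify that $[\I\circ\Hv^\infty,\|\cdot\|_{\I\circ\Hv^\infty}]$ satisfies the three axioms (P1), (P2), (P3) of Definition \ref{def-weighted holomorphic ideal}. My plan is to check these in order, with the bulk of the work going into (P1), specifically the verification that $\|\cdot\|_{\I\circ\Hv^\infty}$ is a genuine norm (not merely a seminorm) and that the resulting space is complete. The other two axioms follow quickly from the factorization definition together with the corresponding properties of the operator ideal $[\I,\|\cdot\|_\I]$.

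For (P1), I would first check that $\I\circ\Hv^\infty(U,F)$ is a linear subspace of $\Hv^\infty(U,F)$: given $f_1=T_1\circ g_1$ and $f_2=T_2\circ g_2$ with $T_i\in\I(G_i,F)$, $g_i\in\Hv^\infty(U,G_i)$, set $G=G_1\oplus_\infty G_2$ (or $\oplus_1$; the choice only affects constants), let $g=(g_1,g_2)\in\Hv^\infty(U,G)$ and $T(y_1,y_2)=T_1y_1+T_2y_2\in\I(G,F)$ by the ideal property of $\I$, so $f_1+f_2=T\circ g$; scalar multiples are trivial. The triangle inequality for $\|\cdot\|_{\I\circ\Hv^\infty}$ follows by combining near-optimal factorizations of $f_1$ and $f_2$ and estimating $\|T\|_\I\|g\|_v$ in terms of $\|T_i\|_\I\|g_i\|_v$. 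The inequality $\|f\|_v\le\|f\|_{\I\circ\Hv^\infty}$ — which in particular forces $\|\cdot\|_{\I\circ\Hv^\infty}$ to be a norm rather than a seminorm — comes from the factorization $f=T\circ g$: we have $\|f\|_v=\sup_{x\in U}v(x)\|T(g(x))\|\le\|T\|\,\sup_{x\in U}v(x)\|g(x)\|\le\|T\|_\I\|g\|_v$, using $\|T\|\le\|T\|_\I$ for normed operator ideals, and taking the infimum over factorizations gives the claim. For completeness, I would use the standard criterion that a normed space is Banach if every absolutely convergent series converges: given $\sum_n\|f_n\|_{\I\circ\Hv^\infty}<\infty$, pick factorizations $f_n=T_n\circ g_n$ with $\|T_n\|_\I\|g_n\|_v\le\|f_n\|_{\I\circ\Hv^\infty}+2^{-n}$, rescale so that (say) $\|g_n\|_v\le 2^{-n}$ while $\sum_n\|T_n\|_\I<\infty$, form $G=\left(\bigoplus_n G_n\right)_{c_0}$, the diagonal-type map $g=(g_n)_n\colon U\to G$ (which lies in $\Hv^\infty(U,G)$ since $\|g_n\|_v\to 0$), and $T\colon G\to F$, $T((y_n)_n)=\sum_n T_ny_n$, which belongs to $\I(G,F)$ because $\I$ is a Banach operator ideal and the series of the $T_n$ converges absolutely in $\I$; then $T\circ g=\sum_n f_n$ with the right norm control.

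For (P2), given $h\in\Hv^\infty(U)$ and $y\in F$, factor $h\cdot y$ as $T\circ g$ with $G=\mathbb{C}$, $g=h\in\Hv^\infty(U)$ and $T\colon\mathbb{C}\to F$, $T(\lambda)=\lambda y$; then $T\in\I(\mathbb{C},F)$ with $\|T\|_\I=\|y\|$ (every operator ideal contains the rank-one operators with $\|\cdot\|_\I$ equal to the operator norm), so $\|h\cdot y\|_{\I\circ\Hv^\infty}\le\|h\|_v\|y\|$; the reverse inequality is immediate from $\|f\|_v\le\|f\|_{\I\circ\Hv^\infty}$ and $\|h\cdot y\|_v=\|h\|_v\|y\|$. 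For (P3), if $f=T\circ g$ is a near-optimal factorization, then $S\circ f\circ h=(S\circ T)\circ(g\circ h)$; here $S\circ T\in\I(G,G_0)$ with $\|S\circ T\|_\I\le\|S\|\,\|T\|_\I$ by the ideal property of $\I$, and $g\circ h\in\Hv^\infty(V,G)$ with $\|g\circ h\|_v\le c_v(h)\|g\|_v$ — this last estimate is exactly the reason the hypothesis $c_v(h)<\infty$ is imposed, since $v(x)\|g(h(x))\|=\frac{v(x)}{v(h(x))}\,v(h(x))\|g(h(x))\|\le c_v(h)\|g\|_v$ for $x\in V$. Taking the infimum over factorizations of $f$ yields $\|S\circ f\circ h\|_{\I\circ\Hv^\infty}\le\|S\|\,\|f\|_{\I\circ\Hv^\infty}c_v(h)$.

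The main obstacle is the completeness argument in (P1): one must handle the non-uniqueness of the intermediate space $G$ and assemble a single factorization of the sum of a series from infinitely many individual factorizations, which requires care in choosing the $c_0$- (or $\ell_1$-) direct sum so that the diagonal map stays weighted-holomorphic and the assembled operator stays in the Banach ideal $\I$ with controlled norm. Everything else is a routine transcription of the corresponding facts for composition ideals in the bounded-holomorphic setting (cf. \cite{CabJimRui-23}), adapted by inserting the weight $v$ and the constant $c_v(h)$ where appropriate.
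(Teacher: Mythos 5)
Your verification of (P1) (linear subspace via the $\oplus_\infty$ sum, triangle inequality from near-optimal factorizations, and $\|f\|_v\le\|f\|_{\I\circ\Hv^\infty}$ forcing the norm property), (P2) (rank-one factorization $h\cdot y=M_y\circ h$ with $\|M_y\|_\I=\|y\|$), and (P3) (the estimate $\|g\circ h\|_v\le c_v(h)\|g\|_v$ and the ideal property of $\I$) is exactly the paper's argument, and it is all that the statement requires: the proposition only asserts that a \emph{normed} operator ideal produces a \emph{normed} weighted holomorphic ideal.

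The completeness excursion you add is therefore outside the statement (it would need $[\I,\|\cdot\|_\I]$ Banach, which is not assumed here), and as written it has a genuine flaw: since the product $\|T_n\|_\I\|g_n\|_v$ is invariant under rescaling and is pinned near $a_n:=\|f_n\|_{\I\circ\Hv^\infty}+2^{-n}$, demanding $\|g_n\|_v\le 2^{-n}$ forces $\|T_n\|_\I\ge a_n2^n$, so $\sum_n\|T_n\|_\I<\infty$ cannot be guaranteed. The standard repair is to choose $\varepsilon_n\to 0$ with $\sum_n a_n/\varepsilon_n<\infty$ (e.g.\ $\varepsilon_n=\bigl(\sum_{k\ge n}a_k\bigr)^{1/2}$) and normalize $\|g_n\|_v\le\varepsilon_n$, $\|T_n\|_\I\le a_n/\varepsilon_n$, after which your $c_0$-sum construction goes through. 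The paper avoids this altogether: completeness in the Banach case is obtained afterwards from the linearization theorem (Theorem \ref{ideal}), via the isometry $f\mapsto T_f$ onto $\I(\G_v^\infty(U),F)$, in the corollary following it.
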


\begin{proof}
Let $f\in\I\circ\Hv^\infty(U,F)$ and assume that $f=T\circ g$ for some complex Banach space $G$, an operator $T\in\I(G,F)$ and a mapping $g\in\Hv^\infty(U,G)$.

(P1): Therefore, for all $x\in U$, we have 
$$
v(x)\left\|f(x)\right\|=v(x)\left\|T(g(x))\right\|\leq v(x)\left\|T\right\|\left\|g(x)\right\|\leq \left\|T\right\|_\I\left\|g\right\|_v,
$$
and so $f\in\Hv^\infty(U,F)$ with $\left\|f\right\|_v\leq\left\|T\right\|_\I\left\|g\right\|_v$. Taking the infimum over all such factorizations of $f$ yields $\left\|f\right\|_v\leq\|f\|_{\I\circ\Hv^\infty}$. This implies that $f=0$ if $\left\|f\right\|_{\I\circ\Hv^\infty}=0$.

If $\lambda\in\C$, then $\lambda f=\lambda T\circ g$, hence $\lambda f\in\I\circ\Hv^\infty(U,F)$ with $\|\lambda f\|_{\I\circ\Hv^\infty}\leq\left\|\lambda T\right\|_\I\left\|g\right\|_v=\left|\lambda\right|\left\|T\right\|_\I\left\|g\right\|_v$ and taking the infimum over all the factorizations of $f$ yields $\|\lambda f\|_{\I\circ\Hv^\infty}\leq\left|\lambda\right|\|f\|_{\I\circ\Hv^\infty}$. Conversely, if $\lambda\neq 0$, this implies that $\|f\|_{\I\circ\Hv^\infty}=\|\lambda^{-1}(\lambda f))\|_{\I\circ\Hv^\infty}\leq\left|\lambda\right|^{-1}\|\lambda f\|_{\I\circ\Hv^\infty}$ and thus $\left|\lambda\right|
\|f\|_{\I\circ\Hv^\infty}\leq\|\lambda f\|_{\I\circ\Hv^\infty}$, while if $\lambda=0$, it is clear that $\|\lambda f\|_{\I\circ\Hv^\infty}=0=\left|\lambda\right|\|f\|_{\I\circ\Hv^\infty}$. 

If $f_1,f_2\in\I\circ\Hv^\infty(U,F)$, given $\varepsilon>0$, for each $i=1,2$ we can find a complex Banach space $G_i$, an operator $T_i\in\I(G_i,F)$ and a mapping $g_i\in\Hv^\infty(U,G_i)$ with $\left\|g_i\right\|_v=1$ and $\left\|T_i\right\|_\I\leq\|f_i\|_{\I\circ\Hv^\infty}+\varepsilon/2$ such that $f_i=T_i\circ g_i$. Consider the Banach space $G=G_1\oplus_\infty G_2$ and define the mappings $T\colon G\to F$ and $g\colon U\to G$ by $T(y_1,y_2)=T_1(y_1)+T_2(y_2)$ for all $(y_1,y_2)\in G$ and $g(x)=(g_1(x),g_2(x))$ for all $x\in U$, respectively. An easy calculation shows that $T\in\I(G,F)$ with $\left\|T\right\|_\I\leq\left\|T_1\right\|_\I+\left\|T_2\right\|_\I$ and $g\in\Hv^\infty(U,G)$ with $\left\|g\right\|_v\leq 1$. Clearly, $T\circ g=f_1+f_2$, and so $f_1+f_2\in \I\circ\Hv^\infty(U,F)$ with 
$$
\|f_1+f_2\|_{\I\circ\Hv^\infty}\leq\left\|T\right\|_\I\left\|g\right\|_v\leq\left\|T_1\right\|_\I+\left\|T_2\right\|_\I\leq\|f_1\|_{\I\circ\Hv^\infty}+\|f_2\|_{\I\circ\Hv^\infty}+\varepsilon .
$$
The arbitrariness of $\varepsilon>0$ gives $\|f_1+f_2\|_{\I\circ\Hv^\infty}\leq\|f_1\|_{\I\circ\Hv^\infty}+\|f_2\|_{\I\circ\Hv^\infty}$. So we have proved that $(\I\circ\Hv^\infty(U,F),\|\cdot\|_{\I\circ\Hv^\infty})$ is a normed space.

(P2): Take $h$ and $y$ as in (P2) of Definition \ref{def-weighted holomorphic ideal}, note that $h\cdot y=M_y\circ h$, where $M_y\in\F(\mathbb{C},F)\subseteq\I(\mathbb{C},F)$ is the operator defined by $M_y(\lambda)=\lambda y$ for all $\lambda\in\mathbb{C}$, and so $h\cdot y\in\I\circ\Hv^\infty(U,F)$ with $\|h\cdot y\|_{\I\circ\Hv^\infty}\leq\left\|M_y\right\|_\I\left\|h\right\|_v=\left\|M_y\right\|\left\|h\right\|_v=\left\|y\right\|\left\|h\right\|_v$ and, conversely, $\left\|h\right\|_v\left\|y\right\|=\left\|h\cdot y\right\|_v\leq\|h\cdot y\|_{\I\circ\Hv^\infty}$ by the inequality in (P1). 

(P3): Take $f\in\I\circ\Hv^\infty(U,F)$, and $h$ and $S$ as in (P3) of Definition \ref{def-weighted holomorphic ideal}. Hence $S\circ f\circ h=(S\circ T)\circ (g\circ h)$, where $S\circ T\in\I(G,G_0)$ by the ideal property of $\I$, and $g\circ h\in\H^\infty_v(V,G)$ with $\left\|g\circ h\right\|_v\leq\left\|g\right\|_vc_v(h)$ since 
$$
v(x)\left\|(g\circ h)(x)\right\|=\frac{v(x)}{v(h(x)}v(h(x))\left\|g(h(x))\right\|\leq c_v(h)\left\|g\right\|_v
$$
for all $x\in V$. Therefore $S\circ f\circ h\in \I\circ \Hv^\infty(V,G_0)$ with $\|S\circ f\circ h\|_{\I\circ\Hv^\infty}\leq \left\|S\circ T\right\|_\I\left\|g\circ h\right\|_v\leq\left\|S\right\|\left\|T\right\|_\I\left\|g\right\|_v c_v(h)$. Taking the infimum over all the factorizations of $f$ gives $\left\|S\circ f\circ h\right\|_{\I\circ\Hv^\infty}\leq \left\|S\right\|\left\|f\right\|_{\I\circ\Hv^\infty}c_v(h)$.
\end{proof}


\subsection{Linearization}

We now study the linearization of the composition ideal $\I\circ\Hv^\infty$.

\begin{theorem}\label{ideal}
Let $[\I,\left\|\cdot\right\|_\I]$ be a normed operator ideal and $f\in\Hv^\infty(U,F)$. The following conditions are equivalent: 
\begin{enumerate}
\item $f$ belongs to $\I\circ\Hv^{\infty}(U,F)$.
\item $T_f$ belongs to $\I(\G_v^\infty(U),F)$. 
\end{enumerate}
In this case, $\left\|f\right\|_{\I\circ\Hv^\infty}=||T_f||_\I$. 
Further, the mapping $f\mapsto T_f$ is an isometric isomorphism from $(\I\circ\Hv^\infty(U,F),\left\|\cdot\right\|_{\I\circ\Hv^\infty})$ onto $(\I(\G_v^\infty(U),F),\left\|\cdot\right\|_\I)$.
\end{theorem}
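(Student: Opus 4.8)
The plan is to exploit the universal property of $\G_v^\infty(U)$ together with the fact, recalled in the preliminaries, that $f\mapsto T_f$ is already an isometric isomorphism from $\Hv^\infty(U,F)$ onto $\L(\G_v^\infty(U),F)$. So the only new content is to match up the two sub-collections and check that the norms agree.

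First I would prove $(1)\Rightarrow(2)$ together with the inequality $\|T_f\|_\I\le\|f\|_{\I\circ\Hv^\infty}$. Given a factorization $f=T\circ g$ with $T\in\I(G,F)$ and $g\in\Hv^\infty(U,G)$, apply the universal property to $g$ to obtain $T_g\in\L(\G_v^\infty(U),G)$ with $T_g\circ\Delta_v=g$ and $\|T_g\|=\|g\|_v$. Then $(T\circ T_g)\circ\Delta_v=T\circ g=f$, so by uniqueness in the universal property $T_f=T\circ T_g$. Since $\I$ is an operator ideal and $T\in\I(G,F)$, we get $T_f=T\circ T_g\in\I(\G_v^\infty(U),F)$ with $\|T_f\|_\I\le\|T\|_\I\|T_g\|=\|T\|_\I\|g\|_v$. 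Taking the infimum over all factorizations of $f$ gives $\|T_f\|_\I\le\|f\|_{\I\circ\Hv^\infty}$.

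Next I would prove $(2)\Rightarrow(1)$ together with the reverse inequality $\|f\|_{\I\circ\Hv^\infty}\le\|T_f\|_\I$. Suppose $T_f\in\I(\G_v^\infty(U),F)$. Use the canonical factorization $f=T_f\circ\Delta_v$, where $\Delta_v\in\Hv^\infty(U,\G_v^\infty(U))$ with $\|\Delta_v\|_v\le 1$; this is precisely a factorization of the type in Definition~\ref{composition ideal} with $G=\G_v^\infty(U)$, $T=T_f\in\I(\G_v^\infty(U),F)$ and $g=\Delta_v$. Hence $f\in\I\circ\Hv^\infty(U,F)$ with $\|f\|_{\I\circ\Hv^\infty}\le\|T_f\|_\I\|\Delta_v\|_v\le\|T_f\|_\I$. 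Combining the two inequalities yields $\|f\|_{\I\circ\Hv^\infty}=\|T_f\|_\I$.

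Finally, for the last assertion: the map $f\mapsto T_f$ is linear and, by the preliminaries, a bijection from $\Hv^\infty(U,F)$ onto $\L(\G_v^\infty(U),F)$; by the equivalence $(1)\Leftrightarrow(2)$ just established it restricts to a bijection from $\I\circ\Hv^\infty(U,F)$ onto $\I(\G_v^\infty(U),F)$, and by the norm identity $\|f\|_{\I\circ\Hv^\infty}=\|T_f\|_\I$ this restriction is isometric. I do not anticipate a serious obstacle here; the one point that needs a little care is invoking the uniqueness clause of the universal property to identify $T_f$ with $T\circ T_g$ in the $(1)\Rightarrow(2)$ direction, since that is what licenses transferring the ideal membership from $T$ to $T_f$.
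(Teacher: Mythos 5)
Your proposal is correct and follows essentially the same route as the paper: identify $T_f=T\circ T_g$ via the universal property to get $(1)\Rightarrow(2)$ with $\|T_f\|_\I\le\|f\|_{\I\circ\Hv^\infty}$, use the canonical factorization $f=T_f\circ\Delta_v$ with $\|\Delta_v\|_v\le 1$ for the converse and the reverse inequality, and deduce surjectivity of $f\mapsto T_f$ from the linear-case isomorphism. No gaps.
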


\begin{proof}
$(i)\Rightarrow(ii)$: If $f\in\I\circ\Hv^{\infty}(U,F)$, one can write $f=T\circ g$ for some complex Banach space $G$, $T\in\I(G,F)$ and $g\in\Hv^{\infty}(U,G)$. Since $f=(T\circ T_g)\circ\Delta_v$ and $T\circ T_g\in\L(\Gv^\infty(U),F)$, it follows that $T_f=T\circ T_g$, and thus $T_f\in\I(\Gv^\infty(U),F)$ by the ideal property of $\I$. Further,  
$$
\left\|T_f\right\|_\I=\left\|T\circ T_g\right\|_\I\leq\left\|T\right\|_\I\left\|T_g\right\|=\left\|T\right\|_\I\left\|g\right\|_v, 
$$
and taking the infimum over all factorizations of $f$ as above yields $||T_f||_\I\leq\left\|f\right\|_{\I\circ\Hv^{\infty}}$. 

$(ii)\Rightarrow(i)$: Assume that $T_f\in\I(\Gv^\infty(U),F)$. Since $f=T_f\circ\Delta_v$ and $\Delta_v\in\H^\infty_v(U,\Gv^\infty(U))$, it follows that $f\in\I\circ\Hv^{\infty}(U,F)$ with 
$$
\left\|f\right\|_{\I\circ\Hv^{\infty}}\leq\left\|T_f\right\|_\I\left\|\Delta_v\right\|_v\leq\left\|T_f\right\|_\I. 
$$

For the last assertion, it is sufficient to show that the map $f\mapsto T_f$ in the statement is surjective. For it, 
take $T\in\I(\G_v^\infty(U),F)$, hence $T=T_f$ for some $f\in\H_v^\infty(U,F)$, and then $f\in\I\circ\Hv^{\infty}(U,F)$ by proved above. 
\end{proof}

The last assertion in the statement of Theorem \ref{ideal} gives the following result.

\begin{corollary}
If $[\I,\left\|\cdot\right\|_\I]$ is a Banach operator ideal, then $[\I\circ\Hv^{\infty}, \left\|\cdot\right\|_{\I\circ\Hv^\infty}]$ is a Banach weighted holomorphic ideal.$\hfill\qed$
\end{corollary}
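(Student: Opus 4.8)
The corollary follows almost immediately from the isometric isomorphism established in Theorem~\ref{ideal}, together with the fact already recorded in Proposition~\ref{ideal comp} that $[\I\circ\Hv^\infty,\|\cdot\|_{\I\circ\Hv^\infty}]$ is always a normed weighted holomorphic ideal whenever $[\I,\|\cdot\|_\I]$ is a normed operator ideal. Since all the algebraic and ideal-theoretic axioms (P1)--(P3) are independent of completeness and hence hold in the present setting, the only thing left to verify is that, for each pair $(U,F)$, the normed space $(\I\circ\Hv^\infty(U,F),\|\cdot\|_{\I\circ\Hv^\infty})$ is complete.

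The plan is to transport completeness across the linearization. By the last assertion of Theorem~\ref{ideal}, the correspondence $f\mapsto T_f$ is an isometric isomorphism from $(\I\circ\Hv^\infty(U,F),\|\cdot\|_{\I\circ\Hv^\infty})$ onto $(\I(\Gv^\infty(U),F),\|\cdot\|_\I)$. When $[\I,\|\cdot\|_\I]$ is a \emph{Banach} operator ideal, the component space $(\I(\Gv^\infty(U),F),\|\cdot\|_\I)$ is, by definition of a Banach operator ideal, a Banach space. An isometric isomorphism preserves completeness, so $(\I\circ\Hv^\infty(U,F),\|\cdot\|_{\I\circ\Hv^\infty})$ is a Banach space. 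Combining this with the normed-weighted-holomorphic-ideal structure from Proposition~\ref{ideal comp} (whose proof of (P1) already gives the inequality $\|f\|_v\le\|f\|_{\I\circ\Hv^\infty}$, and whose proofs of (P2) and (P3) are unaffected), we conclude that $[\I\circ\Hv^\infty,\|\cdot\|_{\I\circ\Hv^\infty}]$ satisfies the stronger form of (P1) required of a Banach weighted holomorphic ideal, hence is one.

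Concretely, I would write: let $(f_n)$ be a Cauchy sequence in $(\I\circ\Hv^\infty(U,F),\|\cdot\|_{\I\circ\Hv^\infty})$; then $(T_{f_n})$ is Cauchy in $(\I(\Gv^\infty(U),F),\|\cdot\|_\I)$ by the isometry, so it converges to some $T\in\I(\Gv^\infty(U),F)$; by surjectivity of $f\mapsto T_f$ there is $f\in\I\circ\Hv^\infty(U,F)$ with $T_f=T$, and $\|f_n-f\|_{\I\circ\Hv^\infty}=\|T_{f_n}-T\|_\I\to 0$. This shows the space is complete, and the remaining axioms are inherited verbatim from Proposition~\ref{ideal comp}.

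There is no real obstacle here; the corollary is a formal consequence of two previously proved statements, and the only minor point to be careful about is to invoke the correct direction of Theorem~\ref{ideal}, namely that $f\mapsto T_f$ is \emph{onto} $\I(\Gv^\infty(U),F)$, since it is precisely surjectivity that allows one to pull a limiting operator back to a limiting weighted holomorphic mapping. Accordingly, the proof can be left essentially to the reader, as the author has done with the terminal $\hfill\qed$.
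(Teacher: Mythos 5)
Your proposal is correct and follows exactly the paper's route: the paper also deduces the corollary from the last assertion of Theorem~\ref{ideal} (the isometric isomorphism $f\mapsto T_f$ onto $\I(\Gv^\infty(U),F)$, which transfers completeness) combined with Proposition~\ref{ideal comp} for the normed-ideal axioms. Your explicit Cauchy-sequence argument merely spells out what the paper leaves implicit.
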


We analyse the relationship of a map $f\in\H_{v\I}^\infty(U,F)$ with its linearization $T_f\in\I(\Gv^\infty(U),F)$. 

\begin{theorem}\label{linear}
Let $p\in [1,\infty]$ and $f\in\Hv^\infty(U,F)$. For the Banach operator ideal $[\I,\left\|\cdot\right\|_\I]$ with $\I=\K_p,\K_{wp},\K_{up}$, the following assertions are equivalent:
\begin{enumerate}
\item $f$ belongs to $\H_{v\I}^\infty(U,F)$.
\item $T_f$ belongs to $\I(\Gv^\infty(U),F)$.
\end{enumerate}
In this case, $\left\|f\right\|_{\H_{v\I}^\infty}=\left\|T_f\right\|_I$. Furthermore, the correspondence $f\mapsto T_f$ is an isometric isomorphism from $(\H_{v\I}^\infty(U,F),\left\|\cdot\right\|_{\H_{v\I}^\infty})$ onto $(\I(\Gv^\infty(U),F),\left\|\cdot\right\|_I)$. As a consequence, $(\H_{v\I}^\infty(U,F),\left\|\cdot\right\|_{\H_{v\I}^\infty})$ is a Banach space.
\end{theorem}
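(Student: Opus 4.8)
The plan is to exploit the linearization $T_f\in\L(\G_v^\infty(U),F)$ together with the identification $B_{\G_v^\infty(U)}=\overline{\abco}(\At_{\G_v^\infty(U)})$. The starting observation is that, since $T_f\circ\Delta_v=f$ and $\Delta_v(x)=\delta_x$, we have $T_f(v(x)\delta_x)=v(x)T_f(\delta_x)=v(x)f(x)$ for every $x\in U$, so that
$$
T_f\bigl(\At_{\G_v^\infty(U)}\bigr)=(vf)(U),\qquad \At_{\G_v^\infty(U)}\subseteq B_{\G_v^\infty(U)},
$$
the last inclusion because $\left\|\Delta_v\right\|_v\leq 1$. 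The argument then rests on one standard fact about the measures $m_\I$, which I would recall as a lemma (it is Mazur's theorem for $\I=\K_\infty$ and is available in the literature on $p$-compactness, e.g. \cite{DelPinSer-10,LasTur-12,Kim-14}): for $\I=\K_p,\K_{wp},\K_{up}$ and a subset $K$ of a Banach space $F$, $K$ has the $\I$-property if and only if $\overline{\abco}(K)$ does, and then $m_\I(\overline{\abco}(K))=m_\I(K)$; moreover any subset of a set with the $\I$-property again has the $\I$-property with no larger measure.

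With this in hand, the implication $(ii)\Rightarrow(i)$ is immediate: if $T_f\in\I(\G_v^\infty(U),F)$, then $T_f(B_{\G_v^\infty(U)})$ has the $\I$-property with $m_\I(T_f(B_{\G_v^\infty(U)}))=\left\|T_f\right\|_\I$, and since $(vf)(U)=T_f(\At_{\G_v^\infty(U)})\subseteq T_f(B_{\G_v^\infty(U)})$, the set $(vf)(U)$ has the $\I$-property with $\left\|f\right\|_{\H_{v\I}^\infty}=m_\I((vf)(U))\leq\left\|T_f\right\|_\I$, i.e. $f\in\H_{v\I}^\infty(U,F)$. For $(i)\Rightarrow(ii)$, assume $f\in\H_{v\I}^\infty(U,F)$; using that $T_f$ is linear and continuous and that $T_f(\abco(A))=\abco(T_f(A))$, one gets $T_f(B_{\G_v^\infty(U)})=T_f(\overline{\abco}(\At_{\G_v^\infty(U)}))\subseteq\overline{\abco}(T_f(\At_{\G_v^\infty(U)}))=\overline{\abco}((vf)(U))$. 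By the lemma the right-hand set has the $\I$-property with measure $m_\I((vf)(U))=\left\|f\right\|_{\H_{v\I}^\infty}$, hence so does its subset $T_f(B_{\G_v^\infty(U)})$, and $\left\|T_f\right\|_\I\leq\left\|f\right\|_{\H_{v\I}^\infty}$; thus $T_f\in\I(\G_v^\infty(U),F)$. The two inequalities together yield $\left\|f\right\|_{\H_{v\I}^\infty}=\left\|T_f\right\|_\I$.

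For the final assertions I would invoke the universal property of $\G_v^\infty(U)$: $f\mapsto T_f$ is already an isometric isomorphism from $\H_v^\infty(U,F)$ onto $\L(\G_v^\infty(U),F)$, and the equivalence just proved shows it restricts to a norm-preserving bijection of $\H_{v\I}^\infty(U,F)$ onto $\I(\G_v^\infty(U),F)$ — surjectivity holds because any $T\in\I(\G_v^\infty(U),F)\subseteq\L(\G_v^\infty(U),F)$ is of the form $T_f$ for some $f\in\H_v^\infty(U,F)$, and then $f\in\H_{v\I}^\infty(U,F)$ by $(ii)\Rightarrow(i)$. Since $[\I,\left\|\cdot\right\|_\I]$ is a Banach operator ideal, $(\I(\G_v^\infty(U),F),\left\|\cdot\right\|_\I)$ is complete, hence so is $(\H_{v\I}^\infty(U,F),\left\|\cdot\right\|_{\H_{v\I}^\infty})$ via the isometric isomorphism.

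The parts I expect to be routine are the inclusion $T_f(\overline{\abco}(A))\subseteq\overline{\abco}(T_f(A))$ and the monotonicity of $m_\I$ under passing to subsets; the one genuine ingredient — and the step I would be most careful about — is the lemma asserting that the $\I$-property and the value of $m_\I$ are unchanged under taking the closed absolutely convex hull, since for $p<\infty$ (and especially $p=1$, where the $p$-convex hull of an $\ell_p$-sequence need not be closed) this is not a triviality and must be imported from the $p$-compactness literature rather than proved from scratch.
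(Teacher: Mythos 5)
Your proposal is correct and follows essentially the same route as the paper: the same chain of inclusions $(vf)(U)=T_f(\At_{\G_v^\infty(U)})\subseteq T_f(B_{\G_v^\infty(U)})\subseteq\overline{\abco}((vf)(U))$, the same imported fact (from \cite{Kim-14,LasTur-12}) that the $\I$-property and $m_\I$ are preserved under closed absolutely convex hulls, and the same deduction of the final assertions from the universal property of $\G_v^\infty(U)$ and completeness of the operator ideal. No gaps; your closing remark correctly identifies the one nontrivial external ingredient.
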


\begin{proof}
We know the following relations:
\begin{align*}
(vf)(U)=T_f(\At_{\G^\infty_v(U)})&\subseteq T_f(B_{\Gv^\infty(U)})=T_f(\overline{\abco}(\At_{\G^\infty_v(U)}))\\
						 &\subseteq \overline{\abco}(T_f(\At_{\G^\infty_v(U)}))=\overline{\abco}((vf)(U)).
\end{align*}
Assume $\I=\K_p,\K_{wp},\K_{up}$. 

$(i)\Rightarrow (ii)$: If $f\in\H_{v\I}^\infty(U,F)$, then $(vf)(U)$ has the $\I$-property. It follows that $\overline{\abco}((vf)(U))$ enjoys the $\I$-property with $m_\I(\overline{\abco}((vf)(U)))=m_\I((vf)(U))$ (see \cite{Kim-14,LasTur-12}). Now, the second inclusion above implies that $T_f(B_{\Gv^\infty(U)})$ has the $\I$-property, that is, $T_f\in\I(\Gv^\infty(U),F)$, with 
$$
\left\|T_f\right\|_\I=m_{\I}(T_f(B_{\Gv^\infty(U)}))\leq m_\I(\overline{\abco}((vf)(U)))=m_\I((vf)(U))=\left\|f\right\|_{\H_{v\I}^\infty},
$$

$(ii)\Rightarrow (i)$: If $T_f\in\I(\Gv^\infty(U),F)$, then $T_f(B_{\Gv^\infty(U)})$ has the $\I$-property. The first inclusion above yields that $(vf)(U)$ has also this property, that is, $f\in\H_{v\I}^\infty(U,F)$, with 
$$
\left\|f\right\|_{\H_{v\I}^\infty}=m_\I((vf)(U))\leq m_\I(T_f(B_{\Gv^\infty(U)}))=\left\|T_f\right\|_\I.
$$
The other assertions of the statement follow immediately.
\end{proof}

In view of Theorem \ref{linear}, Theorem \ref{ideal} and Proposition \ref{ideal comp} yield the following. 

\begin{corollary}\label{proposition: Banach ideal}
$[\H_{v\I}^\infty,\left\|\cdot\right\|_{\H_{v\I}^\infty}]=[\I\circ\H_v^\infty,\left\|\cdot\right\|_{\I\circ\H_v^\infty}]$ and, in particular, $[\H_{v\I}^\infty,\left\|\cdot\right\|_{\H_{v\I}^\infty}]$ is a Banach weighted holomorphic ideal for $\I=\K_p,\K_{wp},\K_{up}$. $\hfill\qed$
\end{corollary}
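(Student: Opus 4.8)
The statement to prove is Corollary \ref{proposition: Banach ideal}, which asserts that $[\H_{v\I}^\infty,\left\|\cdot\right\|_{\H_{v\I}^\infty}]=[\I\circ\H_v^\infty,\left\|\cdot\right\|_{\I\circ\H_v^\infty}]$ for $\I=\K_p,\K_{wp},\K_{up}$, and hence that the left-hand side is a Banach weighted holomorphic ideal. The plan is simply to chain together the two linearization theorems already established. By Theorem \ref{ideal}, the map $f\mapsto T_f$ is an isometric isomorphism from $(\I\circ\Hv^\infty(U,F),\left\|\cdot\right\|_{\I\circ\Hv^\infty})$ onto $(\I(\Gv^\infty(U),F),\left\|\cdot\right\|_\I)$; by Theorem \ref{linear}, the same map $f\mapsto T_f$ is an isometric isomorphism from $(\H_{v\I}^\infty(U,F),\left\|\cdot\right\|_{\H_{v\I}^\infty})$ onto the very same space $(\I(\Gv^\infty(U),F),\left\|\cdot\right\|_\I)$ (here one uses that $\K_p$, $\K_{wp}$, $\K_{up}$ are Banach operator ideals, which is recorded in the preliminaries). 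Since both identifications are implemented by the identical assignment $f\mapsto T_f$, composing one with the inverse of the other shows that for every pair $(U,F)$ the sets $\H_{v\I}^\infty(U,F)$ and $\I\circ\Hv^\infty(U,F)$ coincide as subsets of $\Hv^\infty(U,F)$, and that $\left\|f\right\|_{\H_{v\I}^\infty}=\left\|T_f\right\|_\I=\left\|f\right\|_{\I\circ\Hv^\infty}$ for every $f$ in this common set. That establishes the equality of the two normed weighted holomorphic ideals.

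For the ``in particular'' clause, I would invoke Proposition \ref{ideal comp} together with the Corollary immediately preceding Theorem \ref{linear}: since $[\I,\left\|\cdot\right\|_\I]$ is a Banach operator ideal for $\I=\K_p,\K_{wp},\K_{up}$, that Corollary already gives that $[\I\circ\Hv^\infty,\left\|\cdot\right\|_{\I\circ\Hv^\infty}]$ is a Banach weighted holomorphic ideal, and the equality just proved transports this property verbatim to $[\H_{v\I}^\infty,\left\|\cdot\right\|_{\H_{v\I}^\infty}]$. Alternatively, one can note that Theorem \ref{linear} itself records that $(\H_{v\I}^\infty(U,F),\left\|\cdot\right\|_{\H_{v\I}^\infty})$ is a Banach space and that the three properties (P1)--(P3) of Definition \ref{def-weighted holomorphic ideal} are inherited through the isometric identification with $[\I\circ\Hv^\infty,\left\|\cdot\right\|_{\I\circ\Hv^\infty}]$, which is a normed weighted holomorphic ideal by Proposition \ref{ideal comp}.

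There is essentially no obstacle here: the corollary is a formal consequence of the two preceding theorems, and the only point requiring the slightest care is to observe that it is literally the same linearization map $f\mapsto T_f$ appearing in both Theorem \ref{ideal} and Theorem \ref{linear} — this is what forces the two ideals to agree not merely up to isomorphism but as the same assignment with the same norm. Consequently I expect the proof to be a one- or two-line deduction, which is presumably why the authors state it as a corollary with the $\hfill\qed$ mark inline rather than giving a separate proof.

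\begin{proof}
By Theorem \ref{ideal}, the assignment $f\mapsto T_f$ is an isometric isomorphism from $(\I\circ\Hv^\infty(U,F),\left\|\cdot\right\|_{\I\circ\Hv^\infty})$ onto $(\I(\Gv^\infty(U),F),\left\|\cdot\right\|_\I)$, while by Theorem \ref{linear} (applicable since $[\K_p,\left\|\cdot\right\|_{\K_p}]$, $[\K_{wp},\left\|\cdot\right\|_{\K_{wp}}]$ and $[\K_{up},\left\|\cdot\right\|_{\K_{up}}]$ are Banach operator ideals) the same assignment $f\mapsto T_f$ is an isometric isomorphism from $(\H_{v\I}^\infty(U,F),\left\|\cdot\right\|_{\H_{v\I}^\infty})$ onto $(\I(\Gv^\infty(U),F),\left\|\cdot\right\|_\I)$. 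Since both identifications are realised by the identical map $f\mapsto T_f$, it follows that, for every complex Banach space $E$, every open subset $U$ of $E$ and every complex Banach space $F$, one has $\H_{v\I}^\infty(U,F)=\I\circ\Hv^\infty(U,F)$ as subsets of $\Hv^\infty(U,F)$, and $\left\|f\right\|_{\H_{v\I}^\infty}=\left\|T_f\right\|_\I=\left\|f\right\|_{\I\circ\Hv^\infty}$ for all $f$ in this common space. Hence $[\H_{v\I}^\infty,\left\|\cdot\right\|_{\H_{v\I}^\infty}]=[\I\circ\H_v^\infty,\left\|\cdot\right\|_{\I\circ\H_v^\infty}]$. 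By Proposition \ref{ideal comp}, $[\I\circ\Hv^\infty,\left\|\cdot\right\|_{\I\circ\Hv^\infty}]$ is a normed weighted holomorphic ideal, and it is a Banach one because $[\I,\left\|\cdot\right\|_\I]$ is a Banach operator ideal (as observed in the Corollary following Theorem \ref{ideal}); consequently $[\H_{v\I}^\infty,\left\|\cdot\right\|_{\H_{v\I}^\infty}]$ is a Banach weighted holomorphic ideal for $\I=\K_p,\K_{wp},\K_{up}$.
\end{proof}
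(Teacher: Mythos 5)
Your proof is correct and follows exactly the paper's intended route: the corollary is stated there without a separate proof precisely because it is the immediate combination of Theorem \ref{ideal} and Theorem \ref{linear} (both implemented by the same linearization $f\mapsto T_f$ onto $\I(\G_v^\infty(U),F)$, giving equality of the two ideals with identical norms), together with Proposition \ref{ideal comp} and the corollary following Theorem \ref{ideal} for the Banach weighted holomorphic ideal property. No gaps.
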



\subsection{Factorization}

We have justly proved that the weighted holomorphic mappings of the space $\H_{v\I}^\infty$ for $\I=\K_p,\K_{wp},\K_{up}$ can be factorized in terms of an operator $T$ of the corresponding Banach operator ideal $\I$ and a mapping $g$ of the space $\H_v^\infty$.

Using Theorem \ref{linear}, we can improve this factorization by getting $g$ to belong to the space $\H_{v\I}^\infty$. To achieve this goal, we will apply some known factorizations \cite{SinKar-02,Kim-20,Kim-17} of the members of the Banach operator ideal $\I=\K_p,\K_{wp},\K_{up}$. 

\begin{corollary}\label{fact}
Let $p\in [1,\infty)$ and $f\in\Hv^\infty(U,F)$. For the Banach operator ideal $[\I,\left\|\cdot\right\|_\I]$ with $\I=\K_p,\K_{wp},\K_{up}$, the following conditions are equivalent:
\begin{enumerate}
	\item $f\in\H_{v\I}^\infty(U,F)$.
	\item There exist a quotient space $G$ of $\ell_{p^*}$ (of $c_0$ if $p=1$), an operator $S\in\I(G,F)$ and a mapping $g\in\H_{v\mathcal{A}}^\infty(U,G)$ such that $f=S\circ g$, where $\mathcal{A}=\K_{wp}$ if $\I=\K_{p}$, and $\mathcal{A}=\I$ if $\I=\K_{wp},\K_{up}$.
\end{enumerate}
In this case, $\left\|f\right\|_{\H_{v\I}^\infty}=\inf\left\{\left\|S\right\|_\I\left\|g\right\|_{\H_{v\mathcal{A}}^\infty}\right\}$, where the infimum is taken over all such factorizations.
\end{corollary}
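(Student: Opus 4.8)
The plan is to prove the equivalence by chaining the linearization result (Theorem \ref{linear}) with known factorizations of the linear operators in $\K_p$, $\K_{wp}$, $\K_{up}$ through a quotient of $\ell_{p^*}$ (or $c_0$ when $p=1$), and then to lift the inner linear factor back to a weighted holomorphic mapping via the universal property of $\Gv^\infty(U)$.

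For $(i)\Rightarrow(ii)$: assume $f\in\H_{v\I}^\infty(U,F)$. By Theorem \ref{linear}, $T_f\in\I(\Gv^\infty(U),F)$ with $\|T_f\|_\I=\|f\|_{\H_{v\I}^\infty}$. The known factorization theorems for these ideals (\cite[Theorem 3.2 and its proof]{SinKar-02} for $\K_p$, and the analogues \cite{Kim-17,Kim-20} for $\K_{wp}$ and $\K_{up}$) give a quotient space $G$ of $\ell_{p^*}$ (of $c_0$ if $p=1$), operators $S\in\I(G,F)$ and $R\in\L(\Gv^\infty(U),G)$ such that $T_f=S\circ R$, with $R$ lying in the appropriate class $\mathcal{A}$ (namely $R\in\K_{wp}$ when $\I=\K_p$, since a $p$-compact operator factors as a weakly $p$-compact operator followed by a norm-to-norm step into $F$; and $R\in\I$ when $\I=\K_{wp}$ or $\K_{up}$), and the norms controlled so that $\|S\|_\I\|R\|_{\mathcal{A}}$ can be taken arbitrarily close to $\|T_f\|_\I$. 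Now set $g=R\circ\Delta_v\in\Hv^\infty(U,G)$. Then $T_g=R$ by the universal property, so by Theorem \ref{linear} (applied with ideal $\mathcal{A}$) we get $g\in\H_{v\mathcal{A}}^\infty(U,G)$ with $\|g\|_{\H_{v\mathcal{A}}^\infty}=\|R\|_{\mathcal{A}}$, and $f=T_f\circ\Delta_v=S\circ R\circ\Delta_v=S\circ g$. Taking infima over factorizations gives $\inf\{\|S\|_\I\|g\|_{\H_{v\mathcal{A}}^\infty}\}\le\|f\|_{\H_{v\I}^\infty}$.

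For $(ii)\Rightarrow(i)$: given $f=S\circ g$ with $S\in\I(G,F)$ and $g\in\H_{v\mathcal{A}}^\infty(U,G)$, first note $g\in\Hv^\infty(U,G)$ and $f\in\Hv^\infty(U,F)$. We have $T_f=S\circ T_g$ (from $f=S\circ g=S\circ T_g\circ\Delta_v$ and uniqueness in the universal property), and since $\mathcal{A}\subseteq\I$ in all three cases (with $\|\cdot\|_\I\le\|\cdot\|_{\mathcal{A}}$, using $\K_{wp}\subseteq\K_p$... — here one must be careful: actually the inclusion needed is that $S\circ T_g$ with $S\in\I$ lands in $\I$, which is immediate from the ideal property of $\I$ alone, regardless of $\mathcal{A}$). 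Thus $T_f\in\I(\Gv^\infty(U),F)$ with $\|T_f\|_\I\le\|S\|_\I\|T_g\|=\|S\|_\I\|g\|_v\le\|S\|_\I\|g\|_{\H_{v\mathcal{A}}^\infty}$, and Theorem \ref{linear} gives $f\in\H_{v\I}^\infty(U,F)$ with $\|f\|_{\H_{v\I}^\infty}=\|T_f\|_\I\le\|S\|_\I\|g\|_{\H_{v\mathcal{A}}^\infty}$; taking the infimum yields the reverse inequality $\|f\|_{\H_{v\I}^\infty}\le\inf\{\|S\|_\I\|g\|_{\H_{v\mathcal{A}}^\infty}\}$, completing the proof.

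The main obstacle is the bookkeeping in $(i)\Rightarrow(ii)$: one must cite and correctly apply the precise linear factorization theorems for $\K_p$, $\K_{wp}$, $\K_{up}$ through a quotient of $\ell_{p^*}$ (resp.\ $c_0$), verify that the inner factor $R$ genuinely belongs to the claimed class $\mathcal{A}$ (the subtle point being the $\I=\K_p$ case, where $R$ is only weakly $p$-compact rather than $p$-compact — this reflects the standard phenomenon that a $p$-compact operator factors with a weakly $p$-compact ``first half''), and track the norm estimates through the $\varepsilon$-approximations so that the infimum identity, not just an inequality, is obtained. Everything else is a routine transfer between $f$ and $T_f$ using Theorem \ref{linear} and the universal property of $\Gv^\infty(U)$.
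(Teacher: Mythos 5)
Your proposal is correct and follows essentially the same route as the paper: linearize via Theorem \ref{linear}, invoke the linear factorization theorems of Sinha--Karn and Kim for $\K_p$, $\K_{wp}$, $\K_{up}$ through a quotient of $\ell_{p^*}$ (or $c_0$), pull the inner factor back to a mapping $g$ with $T_g=R$, and handle the converse and the norm identity through the ideal property of $\I$ and the isometry $\|T_g\|\leq\|T_g\|_{\mathcal{A}}=\|g\|_{\H_{v\mathcal{A}}^\infty}$ (your parenthetical self-correction in $(ii)\Rightarrow(i)$ lands on exactly the argument the paper uses). The only cosmetic difference is that the paper additionally observes $(vf)(U)=S((vg)(U))$ to see the $\I$-property of the $v$-range directly, which your operator-level argument makes unnecessary.
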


\begin{proof}
$(i)\Rightarrow (ii)$: Assume that $f\in\H_{v\I}^\infty(U,F)$. Then $T_f\in\I(\Gv^\infty(U),F)$ with $\left\|f\right\|_{\H_{v\I}^\infty}=\left\|T_f\right\|_I$ by Theorem \ref{linear}. By \cite[Theorem 3.2]{SinKar-02} for $\I=\K_p$, \cite[Proposition 2.4]{Kim-20} for $\I=\K_{wp}$, and \cite[Theorem 2.2]{Kim-17} for $\I=\K_{up}$, given $\varepsilon>0$, there exist a quotient space $G$ of $\ell_{p^*}$ (of $c_0$ if $p=1$), an operator $S\in\I(G,F)$ and a mapping $T\in\mathcal{A}(\Gv^\infty(U),G)$ such that $T_f=S\circ T$ and $\left\|S\right\|_\I\left\|T\right\|_{\mathcal{A}}\leq \left\|T_f\right\|_\I+\varepsilon$. By Theorem \ref{linear} again, there is a mapping $g\in\H_{v\mathcal{A}}^\infty(U,G)$ such that $T=T_g$ and $\left\|g\right\|_{\H_{v{\mathcal{A}}}^\infty}=\left\|T\right\|_{\mathcal{A}}$. Hence $f=T_f\circ\Delta_v=S\circ T\circ\Delta_v=S\circ T_g\circ\Delta_v=S\circ g$ and 
$$
\left\|S\right\|_\I\left\|g\right\|_{\H_{v{\mathcal{A}}}^\infty}=\left\|S\right\|_\I\left\|T\right\|_{\mathcal{A}}\leq\left\|T_f\right\|_\I+\varepsilon=\left\|f\right\|_{\H_{v\I}^\infty}+\varepsilon .
$$
We are done, after passing with $\varepsilon$ to zero.

$(ii)\Rightarrow (i)$: Assume that $f=S\circ g$ with $S$ and $g$ being as in (ii). Since $(vf)(U)=S((vg)(U))$ and $(vg)(U)$ is norm bounded, it follows that $(vf)(U)$ has the $\I$-property, that is, $f\in\H_{v\I}^\infty(U,F)$. Also, $T_f=S\circ T_g$. Moreover, by Theorem \ref{linear}, Theorem \ref{ideal}, Definition \ref{composition ideal}, 
$$
\left\|f\right\|_{\H_{v\I}^\infty}=\|T_f\|_\I=\|S\circ T_g\|_\I\leq \|S\|_{\I}\|T_g\| \leq \|S\|_{\I}\|T_g\|_{\mathcal{A}}  =\left\|S\right\|_\I\left\|g\right\|_{\H_{v\mathcal{A}}^\infty},
$$
and thus $\left\|f\right\|_{\H_{v\I}^\infty}\leq\inf\left\{\left\|S\right\|_\I\left\|g\right\|_{\H_{v\mathcal{A}}^\infty}\right\}$ by taking the infimum over all factorizations of $f$ as above.
\end{proof}

We now present both factorizations for $p$-compact weighted holomorphic maps which should be compared to those established in \cite[Proposition 2.9]{GalLasTur-12} and \cite[Theorem 2.3]{Kim-17} for $p$-compact linear operators.

\begin{corollary}\label{corespcoc}
Let $p\in [1,\infty)$ and $f\in\Hv^\infty(U,F)$. The following conditions are equivalent: 
\begin{enumerate}
\item $f\in\H_{v\K_p}^\infty(U,F)$. 
\item There exist a closed subspace $M$ in $\ell_{p^{*}}$ ($c_0$ instead of $\ell_{p^{*}}$ if $p=1$), a separable Banach space $G$, an operator $T$ in $\K_p(\ell_{p^{*}}/M,G)$, a mapping $g$ in $\H_{v\K}^\infty(U,\ell_{p^{*}}/M)$ and an operator $S$ in $\K(G,F)$ such that $f=S\circ T\circ g$. 
\end{enumerate}
In this case, $\left\|f\right\|_{\H_{v\K_p}^\infty}=\inf\{||S||\left\|T\right\|_{\K_p}\|g\|_v\}$, where the infimum is extended over all factorizations of $f$ as above. 
\end{corollary}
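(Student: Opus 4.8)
The plan is to reduce everything to the linearization $T_f$ via Theorem \ref{linear} and then feed $T_f$ into the known ``double'' factorization of $p$-compact linear operators through a quotient of $\ell_{p^*}$, which is \cite[Proposition 2.9]{GalLasTur-12} (equivalently \cite[Theorem 2.3]{Kim-17}).

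For the implication $(i)\Rightarrow(ii)$, I would start from $f\in\H_{v\K_p}^\infty(U,F)$ and use Theorem \ref{linear} to obtain $T_f\in\K_p(\Gv^\infty(U),F)$ with $\|T_f\|_{\K_p}=\|f\|_{\H_{v\K_p}^\infty}$. Applying the linear factorization of $p$-compact operators, for $\varepsilon>0$ there are a closed subspace $M$ of $\ell_{p^*}$ ($c_0$ if $p=1$), a separable Banach space $G$, a compact operator $R\colon\Gv^\infty(U)\to\ell_{p^*}/M$, an operator $T\in\K_p(\ell_{p^*}/M,G)$ and a compact operator $S\colon G\to F$ with $T_f=S\circ T\circ R$ and $\|S\|\,\|T\|_{\K_p}\,\|R\|\le\|T_f\|_{\K_p}+\varepsilon$. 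Setting $g:=R\circ\Delta_v$, the universal property of $\Gv^\infty(U)$ forces $T_g=R$, so the compact-case linearization recalled in Section \ref{section 1} gives $g\in\H_{v\K}^\infty(U,\ell_{p^*}/M)$ with $\|g\|_v=\|R\|$; then $f=T_f\circ\Delta_v=S\circ T\circ R\circ\Delta_v=S\circ T\circ g$, and the $\varepsilon$-estimate yields $\inf\{\|S\|\,\|T\|_{\K_p}\,\|g\|_v\}\le\|f\|_{\H_{v\K_p}^\infty}$ after letting $\varepsilon\to 0$.

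For $(ii)\Rightarrow(i)$, from $f=S\circ T\circ g$ I would observe $(vf)(U)=S(T((vg)(U)))$; since $(vg)(U)$ is relatively compact, hence norm bounded, $T((vg)(U))$ lies in a scalar multiple of $T(B_{\ell_{p^*}/M})$ and is therefore relatively $p$-compact, and the image of a relatively $p$-compact set under the bounded operator $S$ is again relatively $p$-compact, so $f\in\H_{v\K_p}^\infty(U,F)$. For the norm bound I would use $T_f=S\circ T\circ T_g$ with $\|T_g\|=\|g\|_v$, the ideal inequality $\|S\circ T\circ T_g\|_{\K_p}\le\|S\|\,\|T\|_{\K_p}\,\|T_g\|$ and Theorem \ref{linear} to get $\|f\|_{\H_{v\K_p}^\infty}\le\|S\|\,\|T\|_{\K_p}\,\|g\|_v$, and then take the infimum over all factorizations; combining the two inequalities gives the claimed norm identity.

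I do not expect a serious obstacle: the statement is essentially a transcription, through the isometric correspondence $f\leftrightarrow T_f$, of the linear double factorization. The one point that needs care is quoting that linear result in exactly the right shape — with \emph{both} outer factors compact and the intermediate space $G$ separable — and checking that the elementary stability of relatively $p$-compact sets under scalar multiples and bounded-operator images, together with the identity $\|g\|_v=\|T_g\|$, are all already recorded in Section \ref{section 1}, which they are.
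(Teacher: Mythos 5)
Your proposal is correct and follows essentially the same route as the paper: reduce to $T_f$ via Theorem \ref{linear}, invoke the linear double factorization of \cite[Proposition 2.9]{GalLasTur-12} and pull $R$ back to $g\in\H_{v\K}^\infty(U,\ell_{p^*}/M)$ through the compact-case linearization, and for the converse use the ideal property of $\K_p$ together with the isometry $\|f\|_{\H_{v\K_p}^\infty}=\|T_f\|_{\K_p}$. Your extra direct check that $(vf)(U)=S(T((vg)(U)))$ is relatively $p$-compact is a harmless variant of the paper's appeal to Corollary \ref{proposition: Banach ideal}, not a different argument.
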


\begin{proof}
$(i)\Rightarrow (ii)$: Suppose that $f\in\H^\infty_{v\K_p}(U,F)$. By Theorem \ref{linear}, $T_f\in\K_p(\G_v^\infty(U),F)$ with $\left\|T_f\right\|_{\K_p}=\left\|f\right\|_{\H_{v\K_p}^\infty}$. Applying \cite[Proposition 2.9]{GalLasTur-12}, for each $\varepsilon>0$, there exist a closed subspace $M\subseteq\ell_{p^*}$ ($c_0$ instead of $\ell_{p^*}$ if $p=1$), a separable Banach space $G$, an operator $T\in\K_p(\ell_{p^*}/M,G)$, an operator $S\in\K(G,F)$ and an operator $R\in\K(\Gv^\infty(U),\ell_{p^*}/M)$ such that $T_f=S\circ T\circ R$ with $\left\|S\right\|\left\|T\right\|_{\K_p}\left\|R\right\|\leq \left\|T_f\right\|_{\K_p}+\varepsilon$. Moreover, $R=T_g$ with $\left\|g\right\|_v=\left\|R\right\|$ for some $g\in\H_{v\K}^\infty(U,\ell_{p^{*}}/M)$. Thus, we obtain  
$$
f=T_f\circ\Delta_v=S\circ T\circ R\circ\Delta_v=S\circ T\circ T_g\circ\Delta_v=S\circ T\circ g,
$$
with 
$$
\left\|S\right\|\left\|T\right\|_{\K_p}\left\|g\right\|_v=\left\|S\right\|\left\|T\right\|_{\K_p}\left\|R\right\|\leq \left\|T_f\right\|_{\K_p}+\varepsilon=\left\|f\right\|_{\H_{v\K_p}^\infty}+\varepsilon.
$$
Since $\varepsilon$ was arbitrary, we deduce that $\left\|S\right\|\left\|T\right\|_{\K_p}\left\|g\right\|_v\leq \left\|f\right\|_{\H_{v\K_p}^\infty}$.

$(ii)\Rightarrow (i)$: Assume that $f=S\circ T\circ g$ is a factorization as in $(ii)$. Since $S\circ T\in\K_p(\ell_{p^*}/M,F)$ by the ideal property of $\K_p$, Corollary \ref{proposition: Banach ideal} yields that $f\in\H^\infty_{v\K_p}(U,F)$ with 
$$
\left\|f\right\|_{\H_{v\K_p}^\infty}\leq \left\|S\circ T\right\|_{\K_p}\left\|g\right\|_v\leq \left\|S\right\|\left\|T\right\|_{\K_p}\left\|g\right\|_v,
$$ 
and taking infimum over all such factorizations of $f$, we have $\left\|f\right\|_{\H_{v\K_p}^\infty}\leq\inf\{\left\|S\right\|\left\|T\right\|_{\K_p}\left\|g\right\|_v\}$. 
\end{proof}

The next result --which can be proven similarly to Corollary \ref{corespcoc} using \cite[Theorem 2.3]{Kim-17}-- is a factorization theorem stronger than Corollary \ref{fact} for $\I=\K_p$ and Corollary \ref{corespcoc}.

\begin{corollary}\label{corespcocKp}
	Let $p\in [1,\infty)$ and $f\in\Hv^\infty(U,F)$. The following conditions are equivalent: 
	\begin{enumerate}
		\item $f\in\H_{v\K_p}^\infty(U,F)$. 
		\item There exist a quotient space $Z$ of $\ell_{p^{*}}$ ($c_0$ instead of $\ell_{p^{*}}$ if $p=1$), an operator $S$ in $\K_p(Z,F)$, a mapping $g$ in $\H_{v\K_{up}}^\infty(U,Z)$ such that $f=S\circ g$. 
	\end{enumerate}
	In this case, $\left\|f\right\|_{\H_{v\K_{up}}^\infty}=\inf\{\left\|S\right\|_{\K_p}\|g\|_{\H_{v\K_{up}}^\infty}\}$, where the infimum is taken over all possible $S'$s and $g$'s. $\hfill\qed$
\end{corollary}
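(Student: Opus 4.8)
The plan is to transfer the statement through the linearization isomorphism of Theorem \ref{linear} and then invoke the corresponding factorization theorem for $p$-compact linear operators, exactly as in the proof of Corollary \ref{corespcoc}. First I would prove $(i)\Rightarrow(ii)$. Assume $f\in\H_{v\K_p}^\infty(U,F)$; by Theorem \ref{linear} its linearization $T_f$ lies in $\K_p(\Gv^\infty(U),F)$ with $\|T_f\|_{\K_p}=\|f\|_{\H_{v\K_p}^\infty}$. Now apply \cite[Theorem 2.3]{Kim-17}, which says that a $p$-compact operator factors as $T_f=S\circ R$ through a quotient space $Z$ of $\ell_{p^*}$ (of $c_0$ if $p=1$), where $S\in\K_p(Z,F)$, $R\in\K_{up}(\Gv^\infty(U),Z)$, and the factorization can be chosen so that $\|S\|_{\K_p}\|R\|_{\K_{up}}\le\|T_f\|_{\K_p}+\varepsilon$ for any prescribed $\varepsilon>0$. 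Using Theorem \ref{linear} in the reverse direction (applied to the ideal $\K_{up}$), there is a mapping $g\in\H_{v\K_{up}}^\infty(U,Z)$ with $T_g=R$ and $\|g\|_{\H_{v\K_{up}}^\infty}=\|R\|_{\K_{up}}$. Then $f=T_f\circ\Delta_v=S\circ R\circ\Delta_v=S\circ T_g\circ\Delta_v=S\circ g$, and $\|S\|_{\K_p}\|g\|_{\H_{v\K_{up}}^\infty}\le\|T_f\|_{\K_p}+\varepsilon=\|f\|_{\H_{v\K_p}^\infty}+\varepsilon$; letting $\varepsilon\to0$ gives the stated inequality for the infimum.

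For $(ii)\Rightarrow(i)$, suppose $f=S\circ g$ with $Z$ a quotient of $\ell_{p^*}$, $S\in\K_p(Z,F)$ and $g\in\H_{v\K_{up}}^\infty(U,Z)$. Since $\H_{v\K_{up}}^\infty\subseteq\H_v^\infty$ and $S$ is a bounded linear operator, Corollary \ref{proposition: Banach ideal} (identifying $\H_{v\K_p}^\infty$ with the composition ideal $\K_p\circ\H_v^\infty$) immediately gives $f\in\H_{v\K_p}^\infty(U,F)$, because $(vf)(U)=S((vg)(U))$ and $(vg)(U)$ is norm bounded, so its image under a $p$-compact operator is relatively $p$-compact. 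At the level of linearizations $T_f=S\circ T_g$, and using Theorem \ref{linear}, Theorem \ref{ideal} and the inclusion $[\K_p,\|\cdot\|_{\K_p}]\le[\K_{up},\|\cdot\|_{\K_{up}}]$ from Section \ref{section 1}, one estimates
$$
\|f\|_{\H_{v\K_p}^\infty}=\|T_f\|_{\K_p}=\|S\circ T_g\|_{\K_p}\le\|S\|_{\K_p}\|T_g\|\le\|S\|_{\K_p}\|T_g\|_{\K_{up}}=\|S\|_{\K_p}\|g\|_{\H_{v\K_{up}}^\infty},
$$
and taking the infimum over all admissible factorizations yields $\|f\|_{\H_{v\K_p}^\infty}\le\inf\{\|S\|_{\K_p}\|g\|_{\H_{v\K_{up}}^\infty}\}$. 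Combined with the reverse inequality from part $(i)$ this proves the norm identity (with the understanding that the left-hand side of the displayed identity in the statement should read $\|f\|_{\H_{v\K_p}^\infty}$).

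The only genuinely delicate point is making sure the linear factorization theorem cited from \cite{Kim-17} really delivers the \emph{inner} operator $R$ in the class $\K_{up}$ rather than merely compact (which is what Corollary \ref{corespcoc} used via \cite[Proposition 2.9]{GalLasTur-12}); this is precisely the improvement that makes the present corollary stronger than Corollary \ref{fact} for $\I=\K_p$ and than Corollary \ref{corespcoc}. Everything else is a routine transport along the isometric isomorphism $f\mapsto T_f$, together with the operator-ideal inclusion $\K_p\le\K_{up}$ needed to absorb the norm $\|T_g\|$ into $\|T_g\|_{\K_{up}}$ in the estimate above. No new ideas beyond those already deployed in the proofs of Corollaries \ref{fact} and \ref{corespcoc} are required.
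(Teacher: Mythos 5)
Your proposal is correct and follows exactly the route the paper intends: the paper gives no separate proof, only the remark that the result is proven as Corollary \ref{corespcoc} but invoking \cite[Theorem 2.3]{Kim-17} in place of \cite[Proposition 2.9]{GalLasTur-12}, which is precisely your argument (linearize via Theorem \ref{linear}, factor $T_f=S\circ R$ with $R\in\K_{up}(\Gv^\infty(U),Z)$, de-linearize $R=T_g$, and run the ideal-property estimate for the converse). Your observation that the displayed norm identity should read $\left\|f\right\|_{\H_{v\K_p}^\infty}$ rather than $\left\|f\right\|_{\H_{v\K_{up}}^\infty}$ correctly identifies a typo in the statement.
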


The celebrated Davis--Figiel--Johnson--Pe\l czynski theorem \cite{DFJP-74} asserts that any weakly compact linear operator factors through a reflexive Banach space. Since $[\K_{wp},\left\|\cdot\right\|_{\K_{wp}}]\leq [\K_w,\left\|\cdot\right\|]$ whenever $p\in (1,\infty)$, the cited theorem and Corollary \ref{fact} yields immediately the following. 

\begin{corollary}
Let $p\in (1,\infty)$ and $f\in\Hv^\infty(U,F)$. The following are equivalent:
\begin{enumerate}
	\item $f\in\HvWp^\infty(U,F)$.
	\item There are a quotient space $G$ of $\ell_{p^*}$, a reflexive Banach space $H$, operators $T\in\L(G,H)$ and $Q\in\L(H,F)$ and a mapping $g\in\H_{v\K_{wp}}^{\infty}(U,G)$ such that $f=Q\circ T \circ g$. 
\end{enumerate} 
In this case, we have 
$$
\inf\{\left\|Q\circ T\right\|\left\|g\right\|_{\H_{v\K_{wp}}^\infty}\}\leq\left\|f\right\|_{\H_{v\K_{wp}}^\infty}\leq\inf\{\left\|Q\right\|\left\|T\right\|\left\|g\right\|_{\H_{v\K_{wp}}^\infty}\},
$$
where both infimums are taken over all such factorizations of $f$ as in (ii). $\hfill\Box$
\end{corollary}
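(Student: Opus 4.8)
The plan is to combine the Davis–Figiel–Johnson–Pełczynski factorization with the already-established factorization in Corollary \ref{fact}, applied to the case $\I=\K_{wp}$.

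\medskip

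\textbf{Sketch of proof.} The implication $(i)\Rightarrow (ii)$ proceeds as follows. Assume $f\in\HvWp^\infty(U,F)$. By Corollary \ref{fact} applied with $\I=\K_{wp}$ (so that $\mathcal{A}=\K_{wp}$ as well), given $\varepsilon>0$ there exist a quotient space $G$ of $\ell_{p^*}$, an operator $S\in\K_{wp}(G,F)$, and a mapping $g\in\H_{v\K_{wp}}^\infty(U,G)$ with $f=S\circ g$ and $\left\|S\right\|_{\K_{wp}}\left\|g\right\|_{\H_{v\K_{wp}}^\infty}\leq\left\|f\right\|_{\HvWp^\infty}+\varepsilon$. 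Since $p\in (1,\infty)$, we have $[\K_{wp},\left\|\cdot\right\|_{\K_{wp}}]\leq[\K_w,\left\|\cdot\right\|]$, so $S$ is a weakly compact operator with $\left\|S\right\|\leq\left\|S\right\|_{\K_{wp}}$. By the DFJP theorem \cite{DFJP-74}, $S$ factors as $S=Q\circ T$ with $H$ reflexive, $T\in\L(G,H)$ and $Q\in\L(H,F)$; moreover the DFJP construction can be arranged so that $\left\|Q\right\|\left\|T\right\|$ is controlled by (a constant multiple of, or after rescaling exactly by) $\left\|S\right\|$ — and in any case $\left\|Q\circ T\right\|=\left\|S\right\|\leq\left\|S\right\|_{\K_{wp}}$. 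Setting $f=Q\circ T\circ g$ gives the desired factorization, with the estimate $\left\|Q\circ T\right\|\left\|g\right\|_{\H_{v\K_{wp}}^\infty}=\left\|S\right\|\left\|g\right\|_{\H_{v\K_{wp}}^\infty}\leq\left\|S\right\|_{\K_{wp}}\left\|g\right\|_{\H_{v\K_{wp}}^\infty}\leq\left\|f\right\|_{\HvWp^\infty}+\varepsilon$; letting $\varepsilon\to 0$ yields $\inf\{\left\|Q\circ T\right\|\left\|g\right\|_{\H_{v\K_{wp}}^\infty}\}\leq\left\|f\right\|_{\HvWp^\infty}$.

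\medskip

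For $(ii)\Rightarrow (i)$: suppose $f=Q\circ T\circ g$ with data as in (ii). Then $Q\circ T\in\L(H,F)$ factors through the reflexive space $H$, hence is weakly compact, i.e. $Q\circ T\in\K_w(G,F)$. Since $g\in\H_{v\K_{wp}}^\infty(U,G)\subseteq\Hv^\infty(U,G)$ and $\K_w$ is an operator ideal, by the ideal property of the Banach weighted holomorphic ideal $[\H_{v\K_w}^\infty,\left\|\cdot\right\|_v]$ (equivalently by Corollary \ref{proposition: Banach ideal} and Theorem \ref{ideal}, noting $T_f=(Q\circ T)\circ T_g$ is weakly compact because $H$ is reflexive) we get that $f\in\H_{v\K_w}^\infty(U,F)$. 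But more precisely, composing the weakly $p$-compact map $g$ — whose linearization $T_g$ lies in $\K_{wp}$ — with the bounded operator $Q\circ T$ keeps us in $\K_{wp}$: indeed $T_f=(Q\circ T)\circ T_g$ and $\K_{wp}$ is an ideal, so $T_f\in\K_{wp}(\Gv^\infty(U),F)$, whence $f\in\HvWp^\infty(U,F)$ by Theorem \ref{linear}, with $\left\|f\right\|_{\HvWp^\infty}=\left\|T_f\right\|_{\K_{wp}}\leq\left\|Q\circ T\right\|\left\|T_g\right\|_{\K_{wp}}=\left\|Q\circ T\right\|\left\|g\right\|_{\H_{v\K_{wp}}^\infty}\leq\left\|Q\right\|\left\|T\right\|\left\|g\right\|_{\H_{v\K_{wp}}^\infty}$. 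Taking the infimum over all factorizations gives $\left\|f\right\|_{\HvWp^\infty}\leq\inf\{\left\|Q\right\|\left\|T\right\|\left\|g\right\|_{\H_{v\K_{wp}}^\infty}\}$, completing the norm estimates.

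\medskip

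\textbf{Main obstacle.} The delicate point is the norm bookkeeping in the $(i)\Rightarrow(ii)$ direction: the classical DFJP factorization of a weakly compact operator $S$ does not literally preserve the norm (one obtains $\left\|Q\right\|\left\|T\right\|$ comparable to $\left\|S\right\|$, not equal), which is why the statement only claims the two-sided inequality sandwiching $\left\|f\right\|_{\HvWp^\infty}$ rather than an isometric identity. One must be careful to invoke the version of DFJP (or a rescaling of $T$ and $Q$) that yields $\left\|Q\circ T\right\|\leq\left\|S\right\|$ for the lower bound, while the crude bound $\left\|Q\right\|\left\|T\right\|$ feeds the upper bound; the two different products $\left\|Q\circ T\right\|$ versus $\left\|Q\right\|\left\|T\right\|$ are exactly what produces the gap in the final displayed inequalities. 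Everything else is a routine transcription of Corollary \ref{fact} and the ideal property.
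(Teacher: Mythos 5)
Your proposal is correct and follows essentially the same route as the paper, which obtains this corollary precisely by combining Corollary \ref{fact} (with $\I=\mathcal{A}=\K_{wp}$) with the Davis--Figiel--Johnson--Pe\l czynski factorization of the weakly compact operator $S$, using $\left\|Q\circ T\right\|=\left\|S\right\|\leq\left\|S\right\|_{\K_{wp}}$ for the lower estimate and the ideal property of $\K_{wp}$ together with Theorem \ref{linear} for the converse and the upper estimate. Your remark that the DFJP norm control is not actually needed (only $\left\|Q\circ T\right\|=\left\|S\right\|$ enters) is exactly the point that makes the two-sided inequality, rather than an isometry, the right statement.
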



\subsection{Transposition}

The Banach operator ideals $\K_p$, $\K_{wp}$ and $\K_{up}$ are associated by duality with the ideals of quasi $p$-nuclear operators, quasi weakly $p$-nuclear operators and quasi unconditionally $p$-nuclear operators, respectively. 

According to \cite{PerPie-69,Kim-20,Kim-14}, for every $p\in [1,\infty)$, an operator $T\in\L(E,F)$ is said to be \textit{quasi $p$-nuclear (resp., quasi weakly $p$-nuclear, quasi unconditionally $p$-nuclear)} if there is a sequence $(x_n^{*})\in\ell_p(E^{*})$ (resp., $(x_n^{*})\in\ell_p^w(E^*)$, $(x_n^{*})\in\ell_p^u(E^*)$) such that $\left\|T(x)\right\|\leq \left\|(x_n^{*}(x))\right\|_p$ for every $x\in E$.

The space of all quasi $p$-nuclear (resp., quasi weakly $p$-nuclear, quasi unconditionally $p$-nuclear) operators from $E$ into $F$ is denoted by $\QN_p(E,F)$ (resp., $\QN_{wp}(E,F)$, $\QN_{up}(E,F)$). 

It is known that $[\QN_p,\left\|\cdot\right\|_{\QN_p}]$, equipped with the norm 
$$
\left\|T\right\|_{\QN_p}=\inf\left\{\|(x_n^{*})\|_p\colon ||T(x)||\leq \left\|(x_n^{*}(x))\right\|_p,\; \forall x\in E\right\},
$$
is a Banach operator ideal. Similarly, $[\QN_{wp},\left\|\cdot\right\|_{\QN_{wp}}]$ and $[\QN_{up},\left\|\cdot\right\|_{\QN_{wp}}]$, endowed with the norm 
$$
\left\|T\right\|_{\QN_{wp}}=\inf\left\{\|(x_n^{*})\|^w_p\colon ||T(x)||\leq \left\|(x_n^{*}(x))\right\|_p,\; \forall x\in E\right\},
$$
are Banach operator ideals. 

By \cite[Proposition 3.8]{DelPinSer-10} and \cite[Corollary 2.7]{GalLasTur-12} for $\I=\K_p$ and $i=p$, and \cite[Theorem 5.6]{Kim-17} for $\I=\K_{up}$ and $i=up$, an operator $T\in\I(E,F)$ if and only if its adjoint $T^*\in\QN_i(F^*,E^*)$, in whose case $\left\|T\right\|_{\I}=\left\|T^*\right\|_{\QN_i}$. 

A weighted holomorphic version of these results can be stated as follows.

\begin{corollary}\label{fact-2}
Let $p\in (1,\infty)$ and $f\in\Hv^\infty(U,F)$. For the Banach operator ideal $[\I,\left\|\cdot\right\|_\I]$ with $\I=\K_p$ (resp., $\K_{up}$) and $i=p$ (resp., $up$), the following conditions are equivalent:
\begin{enumerate}
	\item $f\in\H_{v\I}^\infty(U,F)$.
	\item $f^t\in\QN_i(F^*,\Hv^\infty(U))$. 
\end{enumerate}
In this case, $\left\|f\right\|_{\H_{v\I}^\infty}=\left\|f^t\right\|_{\QN_i}$.
\end{corollary}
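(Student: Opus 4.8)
The plan is to reduce everything to the linear duality $\I \leftrightarrow \QN_i$ for operators, using the linearization machinery already set up. The bridge is the identity $f^t = J_v^{-1}\circ (T_f)^*$ recorded in the preliminaries, together with the fact that $J_v\colon \Hv^\infty(U)\to\Gv^\infty(U)^*$ is an isometric isomorphism.

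First I would invoke Theorem \ref{linear}: for $\I=\K_p$ or $\I=\K_{up}$, one has $f\in\H_{v\I}^\infty(U,F)$ if and only if $T_f\in\I(\Gv^\infty(U),F)$, and in that case $\|f\|_{\H_{v\I}^\infty}=\|T_f\|_\I$. Next, apply the cited linear duality (\cite[Proposition 3.8]{DelPinSer-10} and \cite[Corollary 2.7]{GalLasTur-12} for $\I=\K_p$, $i=p$; \cite[Theorem 5.6]{Kim-17} for $\I=\K_{up}$, $i=up$): $T_f\in\I(\Gv^\infty(U),F)$ if and only if its adjoint $(T_f)^*\in\QN_i(F^*,\Gv^\infty(U)^*)$, with $\|T_f\|_\I=\|(T_f)^*\|_{\QN_i}$. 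So far this gives $f\in\H_{v\I}^\infty(U,F)\iff (T_f)^*\in\QN_i(F^*,\Gv^\infty(U)^*)$ with matching norms.

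It remains to transfer the last condition across the isometric isomorphism $J_v^{-1}\colon\Gv^\infty(U)^*\to\Hv^\infty(U)$. Here I would use that $[\QN_i,\left\|\cdot\right\|_{\QN_i}]$ is a Banach operator ideal: if $(T_f)^*\in\QN_i(F^*,\Gv^\infty(U)^*)$, then $f^t=J_v^{-1}\circ(T_f)^*\in\QN_i(F^*,\Hv^\infty(U))$ with $\|f^t\|_{\QN_i}\leq\|J_v^{-1}\|\,\|(T_f)^*\|_{\QN_i}=\|(T_f)^*\|_{\QN_i}$; conversely, since $(T_f)^*=J_v\circ f^t$, the same ideal property gives $\|(T_f)^*\|_{\QN_i}\leq\|J_v\|\,\|f^t\|_{\QN_i}=\|f^t\|_{\QN_i}$. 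Hence $(T_f)^*\in\QN_i(F^*,\Gv^\infty(U)^*)$ if and only if $f^t\in\QN_i(F^*,\Hv^\infty(U))$, and $\|(T_f)^*\|_{\QN_i}=\|f^t\|_{\QN_i}$. Chaining the three equalities yields $\|f\|_{\H_{v\I}^\infty}=\|T_f\|_\I=\|(T_f)^*\|_{\QN_i}=\|f^t\|_{\QN_i}$, which completes the proof.

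I do not expect a genuine obstacle here; the argument is a bookkeeping composition of Theorem \ref{linear}, a known linear duality result, and the ideal property of $\QN_i$ under composition with an isometric isomorphism. The only points requiring a little care are verifying that the identity $f^t=J_v^{-1}\circ(T_f)^*$ is applicable as stated (it is part of the preliminaries) and that the hypothesis $p\in(1,\infty)$ is exactly what the cited linear duality theorems require; one should also double-check that $f^t$ indeed lands in $\QN_i(F^*,\Hv^\infty(U))$ rather than merely in $\L(F^*,\Hv^\infty(U))$, which is automatic once membership of $(T_f)^*$ in $\QN_i$ is known.
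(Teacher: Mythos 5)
Your proposal is correct and follows essentially the same route as the paper: Theorem \ref{linear} to pass from $f$ to $T_f$, the cited linear duality between $\I$ and $\QN_i$ to pass to $(T_f)^*$, and the identity $f^t=J_v^{-1}\circ(T_f)^*$ to conclude. The paper merely states the last transfer as an equivalence, whereas you spell out the two-sided norm estimate via the ideal property of $\QN_i$ and the isometry of $J_v$; this is just a more explicit rendering of the same step.
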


\begin{proof}
Applying Theorem \ref{linear} and the results preceding to this corollary, we have
\begin{align*}
f\in\H^\infty_{v\I}(U,F)&\Leftrightarrow T_f\in\I(\G_v^\infty(U),F)\\
                             &\Leftrightarrow (T_f)^*\in\QN_i(F^*,\G_v^\infty(U)^*)\\
                             &\Leftrightarrow f^t=J_v^{-1}\circ(T_f)^*\in\QN_i(F^*,\H^\infty(U)).
\end{align*}
Moreover, $\left\|f\right\|_{\H_{v\I}^\infty}=\left\|T_f\right\|_{\I}=\left\|(T_f)^*\right\|_{\QN_i}=\left\|f^t\right\|_{\QN_i}$. 
\end{proof}

In \cite[Theorem 3.7 (c)]{Kim-19}, Kim proved that the adjoint of a weakly $p$-compact operator $T$ is quasi weakly $p$-nuclear and $\left\|T^*\right\|_{\QN_{wp}}\leq\left\|T\right\|_{\K_{wp}}$. Using this fact, we obtain the following with a proof similar to that of Corollary \ref{fact-2}.

\begin{corollary}
Let $p\in (1,\infty)$. If $f\in\HvWp^\infty(U,F)$, then $f^t\in\QN_{wp}(F^*,\Hv^\infty(U))$ and $\|f^t\|_{\QN_{wp}}\leq\left\|f\right\|_{\H_{v\K_{wp}}^\infty}$. $\hfill\Box$
\end{corollary}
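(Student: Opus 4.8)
The plan is to follow the template of the proof of Corollary~\ref{fact-2}, replacing the two-sided duality between $\K_p$ (resp.\ $\K_{up}$) and $\QN_p$ (resp.\ $\QN_{up}$) by the one-sided statement available for $\K_{wp}$. First I would use Theorem~\ref{linear} to pass from $f\in\HvWp^\infty(U,F)$ to its linearization, obtaining $T_f\in\K_{wp}(\Gv^\infty(U),F)$ with $\|T_f\|_{\K_{wp}}=\|f\|_{\H_{v\K_{wp}}^\infty}$.

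Next I would apply \cite[Theorem~3.7(c)]{Kim-19} to $T_f$: since $T_f$ is weakly $p$-compact, its adjoint $(T_f)^*\colon F^*\to\Gv^\infty(U)^*$ is quasi weakly $p$-nuclear with $\|(T_f)^*\|_{\QN_{wp}}\leq\|T_f\|_{\K_{wp}}$.

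Finally I would transport this back to $f^t$ using the identity $f^t=J_v^{-1}\circ(T_f)^*$ recorded in the preliminaries, where $J_v\colon\Hv^\infty(U)\to\Gv^\infty(U)^*$ is an isometric isomorphism, so $\|J_v^{-1}\|=1$. Since $[\QN_{wp},\|\cdot\|_{\QN_{wp}}]$ is a Banach operator ideal and $J_v^{-1}\in\L(\Gv^\infty(U)^*,\Hv^\infty(U))$, the ideal property gives $f^t\in\QN_{wp}(F^*,\Hv^\infty(U))$ with
$$
\|f^t\|_{\QN_{wp}}\leq\|J_v^{-1}\|\,\|(T_f)^*\|_{\QN_{wp}}=\|(T_f)^*\|_{\QN_{wp}}\leq\|T_f\|_{\K_{wp}}=\|f\|_{\H_{v\K_{wp}}^\infty},
$$
which is the desired conclusion.

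This argument is entirely routine; the only point worth emphasizing — and the reason the statement asserts an inequality rather than the equality of Corollary~\ref{fact-2} — is that no converse to \cite[Theorem~3.7(c)]{Kim-19} is available (quasi weak $p$-nuclearity of $(T_f)^*$ does not force weak $p$-compactness of $T_f$), so the reverse estimate $\|f\|_{\H_{v\K_{wp}}^\infty}\leq\|f^t\|_{\QN_{wp}}$ cannot be obtained by this method. Hence there is no genuine obstacle here, only this asymmetry to keep in mind.
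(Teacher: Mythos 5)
Your proposal is correct and follows essentially the same route the paper intends: the paper explicitly states that this corollary is obtained from \cite[Theorem 3.7 (c)]{Kim-19} ``with a proof similar to that of Corollary \ref{fact-2}'', i.e.\ linearize via Theorem \ref{linear}, pass to the adjoint $(T_f)^*$, and return to $f^t=J_v^{-1}\circ (T_f)^*$ using that $J_v$ is an isometric isomorphism and $[\QN_{wp},\left\|\cdot\right\|_{\QN_{wp}}]$ is a Banach operator ideal. Your closing remark about the one-sidedness of Kim's result, explaining why only the inequality $\|f^t\|_{\QN_{wp}}\leq\left\|f\right\|_{\H_{v\K_{wp}}^\infty}$ is asserted, matches the paper's formulation as well.
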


Given $p\in [1,\infty)$, let us recall (see \cite{Pie-80}) that an operator $T\in\L(E,F)$ is \textit{p-summing} if there exists a constant $C\geq 0$ such that
$$
\left(\sum_{i=1}^n||T(x_i)||^p\right)^{\frac{1}{p}}\leq C\sup_{x^{*}\in B_{E^{*}}}\left(\sum_{i=1}^n\left|x^{*}(x_i)\right|^p\right)^{\frac{1}{p}}
$$
for all $n\in\mathbb{N}$ and $x_1,\ldots, x_n\in E$. The infimum of such constants $C$ is denoted by $\pi_p(T)$ and the linear space of all $p$-summing operators from $E$ into $F$ by $\Pi_p(E,F)$. The pair $[\Pi_p,\pi_p]$ is a Banach operator ideal.

Based on \cite[Proposition 3.13]{DelPinSer-10}, we propose a weighted holomorphic variant of this result.

\begin{corollary}\label{prop-25}
Let $f\in\Hv^\infty(U,F)$ and $g\in\H_{v\K}^\infty(U,F^*)$. Assume that $T_f\in\Pi_p(\Gv^\infty(U),F)$ with $p\in [1,\infty)$. Then $f^t\circ g\in\H^\infty_{v\K_p}(U,\Hv^\infty(U))$ with $\left\|f^t\circ g\right\|_{\H_{v\K_p}^\infty}\leq\pi_p(T_f)\left\|g\right\|_v$.  
\end{corollary}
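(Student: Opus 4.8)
The plan is to reduce the statement to its linear counterpart, \cite[Proposition 3.13]{DelPinSer-10}, via the linearization machinery developed in the excerpt, and then transport the resulting operator-level factorization back to the weighted holomorphic setting using Theorem \ref{linear}. The key observation is that $f^t\circ g$ is a weighted holomorphic map from $U$ into $\Hv^\infty(U)$, so by Theorem \ref{linear} (applied with $\I=\K_p$) it suffices to show that its linearization $T_{f^t\circ g}\in\K_p(\Gv^\infty(U),\Hv^\infty(U))$ with $\left\|T_{f^t\circ g}\right\|_{\K_p}\leq\pi_p(T_f)\left\|g\right\|_v$.

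First I would identify $T_{f^t\circ g}$ in terms of the data. Since $g\in\H_{v\K}^\infty(U,F^*)$, its linearization $T_g\in\K(\Gv^\infty(U),F^*)$ with $\left\|T_g\right\|=\left\|g\right\|_v$. The map $f^t\colon F^*\to\Hv^\infty(U)$ is bounded linear, so $f^t\circ T_g\in\L(\Gv^\infty(U),\Hv^\infty(U))$, and because $(f^t\circ T_g)\circ\Delta_v = f^t\circ g$, uniqueness in the universal property of $\Gv^\infty(U)$ gives $T_{f^t\circ g}=f^t\circ T_g$. Now recall from the Preliminaries that $f^t = J_v^{-1}\circ(T_f)^*$, where $(T_f)^*\colon F^*\to\Gv^\infty(U)^*$ is the adjoint of $T_f$ and $J_v\colon\Hv^\infty(U)\to\Gv^\infty(U)^*$ is the isometric isomorphism; hence $T_{f^t\circ g}=J_v^{-1}\circ(T_f)^*\circ T_g$.

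Next I would invoke \cite[Proposition 3.13]{DelPinSer-10}: if $T_f\in\Pi_p(\Gv^\infty(U),F)$ and $T_g\in\K(\Gv^\infty(U),F^*)$ is any compact operator, then the composition $(T_f)^*\circ T_g\colon\Gv^\infty(U)\to\Gv^\infty(U)^*$ is $p$-compact, with $\left\|(T_f)^*\circ T_g\right\|_{\K_p}\leq\pi_p(T_f)\left\|T_g\right\|$. (This is exactly the linear statement: composing the adjoint of a $p$-summing operator with a compact operator produces a $p$-compact operator, with the natural norm estimate.) Since $J_v^{-1}$ is an isometric isomorphism and $\K_p$ is an operator ideal, $T_{f^t\circ g}=J_v^{-1}\circ\bigl((T_f)^*\circ T_g\bigr)$ is $p$-compact with $\left\|T_{f^t\circ g}\right\|_{\K_p}=\left\|(T_f)^*\circ T_g\right\|_{\K_p}\leq\pi_p(T_f)\left\|T_g\right\|=\pi_p(T_f)\left\|g\right\|_v$. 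Applying Theorem \ref{linear} in the direction $(ii)\Rightarrow(i)$ then yields $f^t\circ g\in\H^\infty_{v\K_p}(U,\Hv^\infty(U))$ with $\left\|f^t\circ g\right\|_{\H_{v\K_p}^\infty}=\left\|T_{f^t\circ g}\right\|_{\K_p}\leq\pi_p(T_f)\left\|g\right\|_v$, which is the claim.

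The only step requiring genuine care is the first one, namely verifying the identity $T_{f^t\circ g}=f^t\circ T_g$ and then rewriting it as $J_v^{-1}\circ(T_f)^*\circ T_g$; once this is in place the rest is an immediate transcription of the cited linear result through the isometries. I expect no real obstacle, since the bridge $f^t=J_v^{-1}\circ(T_f)^*$ is already recorded in the Preliminaries; one should simply be attentive that \cite[Proposition 3.13]{DelPinSer-10} is applied to $(T_f)^*\circ T_g$ with the compact factor on the right, matching the hypothesis $g\in\H_{v\K}^\infty(U,F^*)$.
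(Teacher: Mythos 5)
Your proposal is correct and follows essentially the same route as the paper: identify $T_{f^t\circ g}=f^t\circ T_g=J_v^{-1}\circ(T_f)^*\circ T_g$, apply \cite[Proposition 3.13]{DelPinSer-10} together with the ideal property of $\K_p$ and the isometry $J_v$, and transfer back via Theorem \ref{linear}. The only cosmetic difference is that the paper names the map $h$ with $T_h=f^t\circ T_g$ and then checks $h=f^t\circ g$, whereas you identify $T_{f^t\circ g}$ directly from the universal property; the content is the same.
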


\begin{proof}
Observe that $T_g\in\K(\G_v^\infty(U),F^*)$ with $||T_g||=\left\|g\right\|_v$. Hence $(T_f)^*\circ T_g\in\K_p(\Gv^\infty(U),\Gv^\infty(U)^*)$ with $\left\|(T_f)^*\circ T_g\right\|_{\K_p}\leq \pi_p(T_f)||T_g||$ by \cite[Proposition 3.13]{DelPinSer-10}. From the equality $f^t\circ T_g=J_v^{-1}\circ (T_f)^*\circ T_g$, we infer that $f^t\circ T_g\in\K_p(\Gv^\infty(U),\Hv^\infty(U))$ with $\left\|f^t\circ T_g\right\|_{\K_p}=\left\|(T_f)^*\circ T_g\right\|_{\K_p}$ by the ideal property of $\K_p$. Applying Theorem \ref{linear}, there exists $h\in\H^\infty_{v\K_p}(U,\Hv^\infty(U))$ with $\left\|h\right\|_{\H_{v\K_p}^\infty}=\left\|T_h\right\|_{\K_p}$ such that $f^t\circ T_g=T_h$. Hence $f^t\circ g=h$ and thus $f^t\circ g\in\H^\infty_{v\K_p}(U,\Hv^\infty(U))$ with $\left\|f^t\circ g\right\|_{\H_{v\K_p}^\infty}=\left\|T_h\right\|_{\K_p}=\left\|(T_f)^*\circ T_g\right\|_{\K_p}\leq\pi_p(T_f)\left\|T_g\right\|=\pi_p(T_f)\left\|g\right\|_v$.
\end{proof}

A proof similar to that of Corollary \ref{prop-25} but applying \cite[Proposition 5.4]{SinKar-02} instead of \cite[Proposition 3.13]{DelPinSer-10} provides another result closely related.

\begin{proposition}
Let $f\in\Hv^\infty(U,F)$ and $g\in\HvWp^\infty(U,F^*)$. Assume that $(T_f)^*\in\Pi_p(F^*,\Gv^\infty(U)^*)$ with $p\in [1,\infty)$. Then $f^t\circ g\in\H^\infty_{v\K_p}(U,\Hv^\infty(U))$ with $\left\|f^t\circ g\right\|_{\H_{v\K_p}^\infty}\leq \pi_p((T_f)^*)\left\|g\right\|_{\H_{v\K_{wp}}^\infty}$.  $\hfill\qed$
\end{proposition}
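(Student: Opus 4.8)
The plan is to transcribe the proof of Corollary~\ref{prop-25}, replacing the compactness of the linearization $T_g$ by weak $p$-compactness and the input \cite[Proposition 3.13]{DelPinSer-10} by \cite[Proposition 5.4]{SinKar-02}. First I would linearize $g$: since $g\in\HvWp^\infty(U,F^*)$, Theorem~\ref{linear} applied with $\I=\K_{wp}$ gives $T_g\in\K_{wp}(\Gv^\infty(U),F^*)$ with $\|T_g\|_{\K_{wp}}=\|g\|_{\H_{v\K_{wp}}^\infty}$. Next, composing the $p$-summing operator $(T_f)^*\in\Pi_p(F^*,\Gv^\infty(U)^*)$ with the weakly $p$-compact operator $T_g$, the cited result of Sinha and Karn yields $(T_f)^*\circ T_g\in\K_p(\Gv^\infty(U),\Gv^\infty(U)^*)$ with $\|(T_f)^*\circ T_g\|_{\K_p}\leq\pi_p((T_f)^*)\|T_g\|_{\K_{wp}}$.

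Then I would transfer this to an $\Hv^\infty(U)$-valued statement exactly as in Corollary~\ref{prop-25}. Using the identity $f^t=J_v^{-1}\circ(T_f)^*$ from the preliminaries and the fact that $J_v^{-1}\colon\Gv^\infty(U)^*\to\Hv^\infty(U)$ is an isometric isomorphism, the ideal property of $\K_p$ gives $f^t\circ T_g=J_v^{-1}\circ(T_f)^*\circ T_g\in\K_p(\Gv^\infty(U),\Hv^\infty(U))$ with $\|f^t\circ T_g\|_{\K_p}=\|(T_f)^*\circ T_g\|_{\K_p}$. By Theorem~\ref{linear} again, there is $h\in\H_{v\K_p}^\infty(U,\Hv^\infty(U))$ with $T_h=f^t\circ T_g$ and $\|h\|_{\H_{v\K_p}^\infty}=\|T_h\|_{\K_p}$. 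Since $g=T_g\circ\Delta_v$, composing with $\Delta_v$ gives $f^t\circ g=f^t\circ T_g\circ\Delta_v=T_h\circ\Delta_v=h$, so $f^t\circ g\in\H_{v\K_p}^\infty(U,\Hv^\infty(U))$; chaining the identities and the estimate above yields
$$
\|f^t\circ g\|_{\H_{v\K_p}^\infty}=\|T_h\|_{\K_p}=\|(T_f)^*\circ T_g\|_{\K_p}\leq\pi_p((T_f)^*)\|T_g\|_{\K_{wp}}=\pi_p((T_f)^*)\|g\|_{\H_{v\K_{wp}}^\infty},
$$
which is the asserted bound.

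The only point requiring care is to confirm that \cite[Proposition 5.4]{SinKar-02} applies in precisely the form needed here: that the composition of a $p$-summing operator on the left with a weakly $p$-compact operator on the right is $p$-compact, with $\K_p$-norm dominated by $\pi_p$ of the first factor times the weakly-$p$-compact norm of the second. This is consistent with the Pietsch-type description of $p$-summing operators as exactly those carrying weakly $p$-summable sequences to $p$-summable ones (so that a weakly $p$-compact generating set is sent to a $p$-compact one with the stated control of the measures $m_{\K_p}$ and $m_{\K_{wp}}$). Once that bookkeeping is checked against the quoted proposition, the rest is a routine adaptation of the argument for Corollary~\ref{prop-25}, and I do not anticipate any substantive obstacle.
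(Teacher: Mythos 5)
Your proof is correct and coincides with the paper's argument: the authors state that the result follows by the same scheme as Corollary \ref{prop-25}, replacing \cite[Proposition 3.13]{DelPinSer-10} by \cite[Proposition 5.4]{SinKar-02}, and their (omitted) proof linearizes $g$ via Theorem \ref{linear} with $\I=\K_{wp}$, applies the Sinha--Karn result to $(T_f)^*\circ T_g$, and transfers back through $f^t=J_v^{-1}\circ (T_f)^*$ exactly as you do.
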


As we have recalled, $p$-compact operators are characterized as those operators $T$ for which $T^*$ factor through a subspace of $\ell_p$. We now obtain a similar factorization for the transpose of a map in $\HvKp^\infty(U,F)$.

\begin{corollary}
Let $p\in [1,\infty)$ and $f\in\Hv^\infty(U,F)$. The following conditions are equivalent:
\begin{enumerate}
	\item $f\in\H_{v\K_p}^\infty(U,F)$. 
	\item There exist a closed subspace $M\subseteq\ell_p$ and $R\in\QN_p(F^{*},M)$, $S\in\L(M,\Hv^\infty(U))$ such that $f^t=S\circ R$. 
\end{enumerate}
\end{corollary}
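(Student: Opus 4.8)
The plan is to move the whole question to the linearization $T_f\in\L(\Gv^\infty(U),F)$, invoke the known duality between $p$-compact and quasi $p$-nuclear operators, carry the conclusion back to $f^t$ through the isometric isomorphism $J_v\colon\Hv^\infty(U)\to\Gv^\infty(U)^*$ by means of the identity $f^t=J_v^{-1}\circ(T_f)^*$, and finally unfold the definition of ``quasi $p$-nuclear'' into an explicit factorization through a closed subspace of $\ell_p$.

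First I would reduce condition $(i)$ to the statement that $f^t\in\QN_p(F^*,\Hv^\infty(U))$. Indeed, by Theorem \ref{linear}, $f\in\HvKp^\infty(U,F)$ if and only if $T_f\in\K_p(\Gv^\infty(U),F)$; by \cite[Proposition 3.8]{DelPinSer-10} and \cite[Corollary 2.7]{GalLasTur-12} this holds if and only if $(T_f)^*\in\QN_p(F^*,\Gv^\infty(U)^*)$; and since $J_v$ is an isometric isomorphism, $\QN_p$ is an operator ideal, and $(T_f)^*=J_v\circ f^t$, this is in turn equivalent to $f^t\in\QN_p(F^*,\Hv^\infty(U))$. (This is exactly Corollary \ref{fact-2}, now read also for $p=1$.) Hence it suffices to prove that $f^t\in\QN_p(F^*,\Hv^\infty(U))$ if and only if $f^t$ admits a factorization $f^t=S\circ R$ with $M\subseteq\ell_p$ a closed subspace, $R\in\QN_p(F^*,M)$ and $S\in\L(M,\Hv^\infty(U))$.

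For the forward implication I would argue as in the Persson--Pietsch factorization of quasi $p$-nuclear operators \cite{PerPie-69}. If $f^t\in\QN_p(F^*,\Hv^\infty(U))$, I pick $(\phi_n)\in\ell_p(F^{**})$ with $\|f^t(y^*)\|_v\le\|(\phi_n(y^*))_n\|_p$ for all $y^*\in F^*$, set $\Phi\colon F^*\to\ell_p$, $\Phi(y^*)=(\phi_n(y^*))_n$, let $M:=\overline{\Phi(F^*)}\subseteq\ell_p$, and let $R$ be $\Phi$ with codomain $M$; then $\|R(y^*)\|=\|(\phi_n(y^*))_n\|_p$, so $R\in\QN_p(F^*,M)$ with defining sequence $(\phi_n)$. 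The displayed inequality makes the assignment $\Phi(y^*)\mapsto f^t(y^*)$ well defined and contractive on the dense subspace $\Phi(F^*)$ of $M$, hence it extends uniquely to $S\in\L(M,\Hv^\infty(U))$ with $\|S\|\le 1$, and $S\circ R=f^t$. The converse is immediate: if $f^t=S\circ R$ with $R\in\QN_p$ and $S$ bounded, then $f^t\in\QN_p(F^*,\Hv^\infty(U))$ because $\QN_p$ is a Banach operator ideal, and the reduction above returns $f\in\HvKp^\infty(U,F)$. A routine bookkeeping of norms through these steps additionally yields $\|f\|_{\HvKp^\infty}=\inf\{\|S\|\,\|R\|_{\QN_p}\}$, the infimum taken over all such factorizations.

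I do not expect a genuine obstacle here, since the argument merely transcribes the linear fact that the adjoint of a $p$-compact operator factors through a subspace of $\ell_p$, the only new ingredient being the dictionary $f^t=J_v^{-1}\circ(T_f)^*$ that lets the linear machinery act on $T_f$. The one point that needs a little care is the case $p=1$: one must check that the duality $\K_1$--$\QN_1$ (hence the validity of Corollary \ref{fact-2} beyond $(1,\infty)$) is indeed available from \cite{DelPinSer-10} via the Grothendieck-type characterization; should one prefer to avoid this, the $p=1$ instance of the reduction in the second paragraph can be proved directly.
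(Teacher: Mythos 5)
Your argument is correct, and it reaches the same factorization, but it routes the linear input differently from the paper. The paper's proof is a two-line transfer: by Theorem \ref{linear}, $f\in\HvKp^\infty(U,F)$ iff $T_f\in\K_p(\Gv^\infty(U),F)$, and then it invokes \cite[Proposition 3.10]{DelPinSer-10} --- which already states that an operator is $p$-compact iff its adjoint factors through a closed subspace $M\subseteq\ell_p$ as (quasi $p$-nuclear) followed by (bounded) --- applied to $(T_f)^*$, composing with $J_v^{-1}$ (resp.\ $J_v$) to move between $\Gv^\infty(U)^*$ and $\Hv^\infty(U)$; both implications come from that single citation. You instead pass through the duality $T_f\in\K_p\Leftrightarrow(T_f)^*\in\QN_p$ (\cite[Proposition 3.8]{DelPinSer-10}, i.e.\ the content of Corollary \ref{fact-2}), identify this with $f^t\in\QN_p(F^*,\Hv^\infty(U))$ via $f^t=J_v^{-1}\circ(T_f)^*$, and then reprove the Persson--Pietsch-type factorization of the quasi $p$-nuclear operator $f^t$ by hand (defining $\Phi$, taking $M=\overline{\Phi(F^*)}$, and extending the contraction from the dense range); the converse via the ideal property of $\QN_p$ is also fine. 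What your route buys is self-containedness of the factorization step and, as a by-product, the isometric formula $\|f\|_{\H_{v\K_p}^\infty}=\inf\{\|S\|\,\|R\|_{\QN_p}\}$, which the statement does not ask for; what the paper's route buys is brevity and, incidentally, no dependence on the $p=1$ case of the duality, since \cite[Proposition 3.10]{DelPinSer-10} is quoted directly for all $p\in[1,\infty)$. Your flag about $p=1$ is the right instinct: Corollary \ref{fact-2} is stated in the paper only for $p\in(1,\infty)$, but the underlying equivalence in \cite{DelPinSer-10} does cover $p=1$, so your reduction goes through; alternatively your hands-on factorization argument itself works verbatim for $p=1$, as you note.
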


\begin{proof}
$(i)\Rightarrow (ii)$: If $f\in\H^\infty_{v\K_p}(U,F)$, then $T_f\in\K_p(\Gv^\infty(U),F)$ by Theorem \ref{linear}. By \cite[Proposition 3.10]{DelPinSer-10}, there exist a closed subspace $M\subseteq\ell_{p}$ and operators $R\in\QN_p(F^*,M)$ and $S_0\in\L(M,\Gv^\infty(U)^*)$ such that $(T_f)^*=S_0\circ R$. Taking $S=J_v^{-1}\circ S_0\in\L(M,\Hv^\infty(U))$, we have $f^t=S\circ R$.

$(ii)\Rightarrow (i)$: Assume that $f^t=S\circ R$ being $S$ and $R$ as in the statement. Then $(T_f)^*=J_v\circ f^t=J_v\circ S\circ R$, and thus $T_f\in\K_p(\Gv^\infty(U),F)$ by \cite[Proposition 3.10]{DelPinSer-10}. Hence $f\in\H^\infty_{v\K_p}(U,F)$ by Theorem \ref{linear}
\end{proof}


\subsection{Relations with $p$-approximable type weighted holomorphic mappings}

In similarity with the linear case, it seems natural to introduce the following classes of weighted holomorphic mappings which are related to the preceding ones. The particular case of bounded holomorphic mappings was dealed by Mujica in \cite{Muj-91}.

\begin{definition}
A mapping $f\in\Hv^\infty(U,F)$ is said to \textit{have finite-dimensional $v$-rank} if the linear hull of $(vf)(U)$ is a finite-dimensional subspace of $F$. In that case, we define the \textit{v-$\rank(f)$ of $f$} to be the dimension of this subspace. We denote by $\H_{v\F}^\infty(U,F)$ the space of all weighted holomorphic mappings from $U$ to $F$ that have finite-dimensional $v$-rank. 

A mapping $f\in\Hv^\infty(U,F)$ is said to be \textit{approximable} if there exists a sequence $(f_n)$ in $\H_{v\F}^\infty(U,F)$ such that $\left\|f_n-f\right\|_v\to 0$ as $n\to\infty$.

Given $p\in [1,\infty)$ and $\I=\K_p,\K_{wp},\K_{up}$, a mapping $f\in\H_{v\I}^\infty(U,F)$ is said to be \textit{$p$-approximable (resp., $wp$-approximable, $up$-approximable)} if there exists a sequence $(f_n)$ in $\H_{v\F}^\infty(U,F)$ such that $\left\|f_n-f\right\|_{H_{v\I}^{\infty}}\to 0$ as $n\to\infty$. 

We denote by $\H^\infty_{v\overline{\F}}(U,F)$ (resp., $\H^\infty_{v\overline{\F}^\I}(U,F)$,  $\I=\K_p, \K_{wp}, \K_{up}$) the space of all approximable (resp., $p$-approximable, $wp$-approximable, $up$-approximable) weighted holomorphic mappings from $U$ to $F$.
\end{definition}

We now study the linearization and transposition of these mappings.

\begin{proposition}\label{prop-18:07}
Let $f\in\Hv^\infty(U,F)$. The following are equivalent: 
\begin{enumerate}
	\item $f\in\H_{v\F}^\infty(U,F)$. 
	\item $T_f\in\F(\G_v^\infty(U),F)$.
	\item $f^t\in\F(F^*,\H_v^\infty(U))$.
\end{enumerate}
In this case, $v$-$\rank(f)=\rank(T_f)$. Furthermore, the correspondence $f\mapsto T_f$ is an isometric isomorphism from $(\H_{v\F}^\infty(U,F),\left\|\cdot\right\|_v)$ onto $(\F(\Gv^\infty(U),F),\left\|\cdot\right\|)$.
\end{proposition}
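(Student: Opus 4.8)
The plan is to establish the cycle of implications $(i)\Rightarrow(ii)\Rightarrow(iii)\Rightarrow(i)$, together with the identity $v\text{-}\rank(f)=\rank(T_f)$ and the isometric-isomorphism statement, which will then follow almost for free from the universal property of $\G_v^\infty(U)$ already recorded in Section \ref{section 1}. The key structural fact I will lean on throughout is the chain of inclusions $(vf)(U)=T_f(\At_{\G^\infty_v(U)})\subseteq T_f(B_{\Gv^\infty(U)})\subseteq\overline{\abco}((vf)(U))$ established at the beginning of the proof of Theorem \ref{linear}: it shows that $\overline{\lin}((vf)(U))$ and $\overline{\lin}(T_f(B_{\Gv^\infty(U)}))$ coincide, and hence that the linear hull of $(vf)(U)$ is finite-dimensional if and only if the range of $T_f$ is finite-dimensional, with equal dimensions in that case. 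This simultaneously handles $(i)\Leftrightarrow(ii)$ and the rank equality.

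For $(ii)\Rightarrow(iii)$: if $T_f\in\F(\G_v^\infty(U),F)$, then its adjoint $(T_f)^*\in\F(F^*,\Gv^\infty(U)^*)$ since taking adjoints preserves the finite-rank ideal and $\rank((T_f)^*)=\rank(T_f)$. Now recall from the preliminaries that $f^t=J_v^{-1}\circ(T_f)^*$, where $J_v\colon\H_v^\infty(U)\to\G_v^\infty(U)^*$ is an isometric isomorphism; composing a finite-rank operator with an isomorphism keeps it finite-rank, so $f^t\in\F(F^*,\H_v^\infty(U))$. For $(iii)\Rightarrow(i)$ I would argue that $f^t\in\F(F^*,\H_v^\infty(U))$ forces $(vf)(U)$ to have finite-dimensional linear hull: for every $y^*\in B_{F^*}$ and $x\in U$ one has $\langle y^*, v(x)f(x)\rangle = v(x)(f^t(y^*))(x) = v(x)\delta_x(f^t(y^*)) = (v(x)\delta_x)(f^t(y^*))$, i.e. the functional on $\H_v^\infty(U)$ given by the atom $v(x)\delta_x$ evaluates $f^t(y^*)$. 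Since $f^t$ has finite rank, its image lies in a finite-dimensional subspace $Z\subseteq\H_v^\infty(U)$, so $f^t$ factors as $f^t = \iota\circ S$ with $S\in\L(F^*,Z)$; then $(T_f)^*=J_v\circ f^t$ also has finite rank, whence $T_f$ has finite rank, and by $(ii)\Rightarrow(i)$ (already proved) $f\in\H_{v\F}^\infty(U,F)$. Alternatively, and perhaps more cleanly, one closes the cycle by observing that the principle of local reflexivity / the elementary fact that $\rank(T)=\rank(T^*)$ for any bounded operator gives $T_f$ finite-rank directly from $(T_f)^*$ finite-rank.

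Finally, for the isometric-isomorphism assertion: the universal property already tells us $f\mapsto T_f$ is an isometric isomorphism from $(\H_v^\infty(U,F),\|\cdot\|_v)$ onto $(\L(\Gv^\infty(U),F),\|\cdot\|)$, so it only remains to check that this bijection carries $\H_{v\F}^\infty(U,F)$ exactly onto $\F(\Gv^\infty(U),F)$ — which is precisely $(i)\Leftrightarrow(ii)$ — and that the norms in play are the restrictions of $\|\cdot\|_v$ and $\|\cdot\|$ respectively, which is immediate since $\H_{v\F}^\infty$ carries the norm $\|\cdot\|_v$ by definition and $\F$ carries the operator norm. I expect the only mildly delicate point to be the implication $(iii)\Rightarrow(i)$ — making sure the duality bookkeeping with $J_v$ and the atoms $v(x)\delta_x$ is turned the right way — but since $\rank(T)=\rank(T^*)$ holds for arbitrary bounded operators between Banach spaces, this reduces to a one-line observation once $(i)\Leftrightarrow(ii)$ is in hand; no genuine obstacle is anticipated.
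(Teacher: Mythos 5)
Your proof is correct and takes essentially the same route as the paper: $(i)\Leftrightarrow(ii)$ and the rank equality are read off from the relation between $(vf)(U)$, the atoms, and $T_f(B_{\G_v^\infty(U)})$ (the paper phrases this via $\lin((vf)(U))=T_f(\lin(\At_{\G_v^\infty(U)}))$), while $(ii)\Leftrightarrow(iii)$ uses $f^t=J_v^{-1}\circ(T_f)^*$ together with the fact that finite rank passes to and from the adjoint, which is exactly the paper's appeal to $\F$ being a completely symmetric ideal and your $\rank(T)=\rank(T^*)$ observation. The final isometric-isomorphism claim is, as in the paper, just the restriction of the universal-property isometry $f\mapsto T_f$, so no changes are needed.
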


\begin{proof}
$(i)\Leftrightarrow (ii)$ and $v$-$\rank(f)=\rank(T_f)$ are deduced from the equality   
$$
\lin((vf)(U))=\lin(T_f(\At_{\G_v^\infty(U)})=T_f(\lin(\At_{\G_v^\infty(U)}))=T_f(\G_v^\infty(U)).
$$ 
$(ii)\Leftrightarrow (iii)$ follows from 
\begin{align*}
T_f\in\F(\G_v^\infty(U),F)&\Leftrightarrow (T_f)^*\in\F(F^*,\G_v^\infty(U)^*)\\
                          &\Leftrightarrow f^t=J_v^{-1}\circ(T_f)^*\in\F(F^*,\H^\infty(U))
\end{align*}
since $\F$ is a completely symmetric operator ideal \cite[4.4.7]{Pie-80}.  
\end{proof}

In view of Theorem \ref{linear}, Propositions \ref{ideal comp} and \ref{prop-18:07} give the following. 

\begin{corollary}\label{proposition: Banach ideal finito}
$[\H_{v\F}^\infty,\left\|\cdot\right\|_v]=[\F\circ\H_v^\infty,\left\|\cdot\right\|_{\F\circ\H_v^\infty}]$ and, in particular, $[\H_{v\F}^\infty,\left\|\cdot\right\|_v]$ is a normed weighted holomorphic ideal. $\hfill\qed$
\end{corollary}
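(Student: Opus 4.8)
The statement to prove is Corollary~\ref{proposition: Banach ideal finito}, namely that $[\H_{v\F}^\infty,\left\|\cdot\right\|_v]=[\F\circ\H_v^\infty,\left\|\cdot\right\|_{\F\circ\H_v^\infty}]$ and in particular $[\H_{v\F}^\infty,\left\|\cdot\right\|_v]$ is a normed weighted holomorphic ideal.

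The plan is to deduce this by combining Theorem~\ref{ideal} (applied to the operator ideal $\I=\F$) with the linearization result of Proposition~\ref{prop-18:07}, exactly parallel to how Corollary~\ref{proposition: Banach ideal} was derived from Theorem~\ref{ideal} and Theorem~\ref{linear}. First I would recall that $[\F,\|\cdot\|]$, the finite-rank operators with the operator norm, is a normed operator ideal; hence by Proposition~\ref{ideal comp} the pair $[\F\circ\H_v^\infty,\|\cdot\|_{\F\circ\H_v^\infty}]$ is a normed weighted holomorphic ideal. Next, Theorem~\ref{ideal} applied with $\I=\F$ says that for $f\in\H_v^\infty(U,F)$ one has $f\in\F\circ\H_v^\infty(U,F)$ if and only if $T_f\in\F(\G_v^\infty(U),F)$, and in that case $\|f\|_{\F\circ\H_v^\infty}=\|T_f\|$ (the operator norm, since the ideal norm on $\F$ is just $\|\cdot\|$).

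Then I would invoke Proposition~\ref{prop-18:07}: $f\in\H_{v\F}^\infty(U,F)$ if and only if $T_f\in\F(\G_v^\infty(U),F)$, and the map $f\mapsto T_f$ is an isometric isomorphism from $(\H_{v\F}^\infty(U,F),\|\cdot\|_v)$ onto $(\F(\G_v^\infty(U),F),\|\cdot\|)$, so in particular $\|f\|_v=\|T_f\|$. Chaining the two equivalences through the common condition ``$T_f\in\F(\G_v^\infty(U),F)$'' gives $\H_{v\F}^\infty(U,F)=\F\circ\H_v^\infty(U,F)$ as sets, and comparing the two norm identities $\|f\|_{\F\circ\H_v^\infty}=\|T_f\|=\|f\|_v$ shows the norms coincide. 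Since this holds for every triple $(E,U,F)$, we conclude $[\H_{v\F}^\infty,\|\cdot\|_v]=[\F\circ\H_v^\infty,\|\cdot\|_{\F\circ\H_v^\infty}]$, and the ``in particular'' clause follows because the right-hand side is a normed weighted holomorphic ideal by Proposition~\ref{ideal comp}.

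There is essentially no obstacle here beyond bookkeeping: the only small point worth checking explicitly is that the ideal norm $\|\cdot\|_\F$ on the operator ideal $\F$ is indeed the ordinary operator norm, so that the ``$\|f\|_{\I\circ\H_v^\infty}=\|T_f\|_\I$'' conclusion of Theorem~\ref{ideal} reads $\|f\|_{\F\circ\H_v^\infty}=\|T_f\|$ and matches the isometry in Proposition~\ref{prop-18:07}. With that observed, the corollary is immediate and the proof is a one-line citation of the two previous results, as indicated by the $\hfill\qed$ already placed after the statement.
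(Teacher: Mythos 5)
Your proposal is correct and follows essentially the same route as the paper: chain the linearization characterization of $\H_{v\F}^\infty$ (Proposition~\ref{prop-18:07}, with $\|f\|_v=\|T_f\|$) with the linearization of the composition ideal (Theorem~\ref{ideal} applied to the normed operator ideal $[\F,\|\cdot\|]$, giving $\|f\|_{\F\circ\H_v^\infty}=\|T_f\|$), and then invoke Proposition~\ref{ideal comp} for the ``in particular'' clause. Your explicit check that the ideal norm on $\F$ is the operator norm is the right bookkeeping point, and note that where the paper cites Theorem~\ref{linear} it is really Theorem~\ref{ideal} that does the work, exactly as you used it.
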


Corollary \ref{proposition: Banach ideal} shows that $[\H_{v\F}^\infty,\left\|\cdot\right\|_v]\leq [\H_{v\I}^\infty,\left\|\cdot\right\|_{\H_{v\I}^\infty}]$ for $\I= \K_p, \K_{wp}, \K_{up}$. Now, we give the following result.

\begin{corollary}
Let $p\in [1,\infty)$ and $\I= \K_p, \K_{wp}, \K_{up}$. Then $\H^\infty_{v\overline{\F}^\I}(U,F)\subseteq \H_{v\I}^\infty(U,F)$.
\end{corollary}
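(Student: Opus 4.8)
The plan is to reduce the statement to the completeness of $\H_{v\I}^\infty(U,F)$ for the norm $\|\cdot\|_{\H_{v\I}^\infty}$, which is part of Theorem \ref{linear}. In essence, the $\I$-approximable maps are by construction the $\|\cdot\|_{\H_{v\I}^\infty}$-closure of $\H_{v\F}^\infty(U,F)$, and such a closure cannot leave $\H_{v\I}^\infty(U,F)$ precisely because this space is a Banach space for that norm.

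Concretely, I would fix $f\in\H^\infty_{v\overline{\F}^\I}(U,F)$ and take a sequence $(f_n)$ in $\H_{v\F}^\infty(U,F)$ with $\|f_n-f\|_{\H_{v\I}^\infty}\to 0$. By the inclusion $[\H_{v\F}^\infty,\|\cdot\|_v]\leq[\H_{v\I}^\infty,\|\cdot\|_{\H_{v\I}^\infty}]$ recalled just before the statement (a consequence of $\F\subseteq\I$ together with Corollaries \ref{proposition: Banach ideal} and \ref{proposition: Banach ideal finito}), each $f_n$ lies in $\H_{v\I}^\infty(U,F)$, so $(f_n)$ is a $\|\cdot\|_{\H_{v\I}^\infty}$-Cauchy sequence in $\H_{v\I}^\infty(U,F)$; by the completeness asserted in Theorem \ref{linear} it converges in $\|\cdot\|_{\H_{v\I}^\infty}$ to some $g\in\H_{v\I}^\infty(U,F)$. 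Since condition (P1) in Corollary \ref{proposition: Banach ideal} gives $\|h\|_v\leq\|h\|_{\H_{v\I}^\infty}$ for every $h\in\H_{v\I}^\infty(U,F)$, convergence in $\|\cdot\|_{\H_{v\I}^\infty}$ forces convergence in $\|\cdot\|_v$, so $f_n\to g$ in $\|\cdot\|_v$ while also $f_n\to f$ in $\|\cdot\|_v$; uniqueness of limits yields $f=g\in\H_{v\I}^\infty(U,F)$, the desired inclusion. Equivalently, one may linearize: by Proposition \ref{prop-18:07} the operators $T_{f_n}$ are finite-rank on $\Gv^\infty(U)$, by the isometry in Theorem \ref{linear} the sequence $(T_{f_n})$ is $\|\cdot\|_\I$-Cauchy in the Banach operator ideal $\I(\Gv^\infty(U),F)$ and so converges to some $T_g$, and then $T_f=T_g\in\I(\Gv^\infty(U),F)$ gives $f\in\H_{v\I}^\infty(U,F)$ again by Theorem \ref{linear}.

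I do not expect a genuine obstacle: the content is simply that $\H_{v\I}^\infty(U,F)$ is closed under $\|\cdot\|_{\H_{v\I}^\infty}$-limits, which is its completeness from Theorem \ref{linear}. The one point that needs attention is bookkeeping of norms — the approximation must be carried out in the strong norm $\|\cdot\|_{\H_{v\I}^\infty}$ (so that the limit retains the $\I$-property of its $v$-range), and one descends to $\|\cdot\|_v$ only at the end to identify that limit with $f$.
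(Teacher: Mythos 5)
Your proof is correct, and its main route differs slightly from the paper's. The paper works entirely on the linearized side: it notes $T_{f_n}\in\F(\G_v^\infty(U),F)$ (Proposition \ref{prop-18:07}), uses the isometry $\|T_{f_n}-T_f\|_{\I}=\|f_n-f\|_{\H_{v\I}^\infty}$ from Theorem \ref{linear} to place $T_f$ in the $\|\cdot\|_{\I}$-closure of the finite-rank operators, and then invokes the results of Sinha--Karn and Kim which say that this closure remains inside $\K_p(\G_v^\infty(U),F)$, $\K_{wp}(\G_v^\infty(U),F)$, $\K_{up}(\G_v^\infty(U),F)$, respectively; Theorem \ref{linear} then returns $f\in\H_{v\I}^\infty(U,F)$. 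Your primary argument instead stays on the weighted-holomorphic side: $\H_{v\F}^\infty(U,F)\subseteq\H_{v\I}^\infty(U,F)$, the sequence is Cauchy for $\|\cdot\|_{\H_{v\I}^\infty}$, completeness of $(\H_{v\I}^\infty(U,F),\|\cdot\|_{\H_{v\I}^\infty})$ (the final assertion of Theorem \ref{linear}) yields a limit $g$, and the comparison $\|\cdot\|_v\leq\|\cdot\|_{\H_{v\I}^\infty}$ from (P1) identifies $g=f$. What this buys is an argument that needs no appeal to the external density/closedness results for finite-rank operators beyond what Theorem \ref{linear} already packages, at the cost of using the completeness statement, which the paper itself obtains from the very same linearization; your ``equivalently'' linearized variant is essentially the paper's proof with completeness of the Banach operator ideal replacing the explicit citations. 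One shared and harmless subtlety, which you flag implicitly in your bookkeeping remark: the hypothesis $\|f_n-f\|_{\H_{v\I}^\infty}\to 0$ already presupposes that $f_n-f$ (hence $f$) belongs to $\H_{v\I}^\infty(U,F)$ --- indeed the paper's definition of an $\I$-approximable map assumes $f\in\H_{v\I}^\infty(U,F)$ from the outset --- so both your proof and the paper's are making explicit something essentially built into the definition; this is not a gap in your argument.
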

\begin{proof}
Let $f\in\H^\infty_{v\overline{\F}^\I}(U,F)$. Hence there is a sequence $(f_n)$ in $\H^\infty_{v\F}(U,F)$ such that $\left\|f_n-f\right\|_{\H^\infty_{v\I}}\to 0$ as $n\to\infty$. Since $T_{f_n}\in\F(\Gv^\infty(U),F)$ by Proposition \ref{prop-18:07}; $\F(\Gv^\infty(U),F)\subseteq\I(\Gv^\infty(U),F)$ by \cite[Theorem 4.2]{SinKar-02}, \cite[Theorem 4.1]{SinKar-02} and \cite[Theorem 2.1]{Kim-14} for $\I= \K_p, \K_{wp}, \K_{up}$, respectively; and $\left\|T_{f_n}-T_f\right\|_{\I}=\left\|f_n-f\right\|_{\H_{v\I}^{\infty}}$ for all $n\in\mathbb{N}$ by Theorem \ref{linear}, we deduce that $T_f\in \overline{\F(\Gv^\infty(U),F)}^{\I}$. Hence $T_f\in\I(\Gv^\infty(U),F)$ by the same results of \cite{SinKar-02} and \cite{Kim-14} cited above, and so $f\in\H^\infty_{v\I}(U,F)$ by Theorem \ref{linear}.
\end{proof}

We can also easily prove the following result using Proposition \ref{prop-18:07} and the fact that the class of approximable linear operators is also a completely symmetric operator ideal \cite[4.4.7]{Pie-80}.

\begin{proposition}\label{prop-18:08}
Let $f\in\Hv^\infty(U,F)$. The following are equivalent: 
\begin{enumerate}
	\item $f\in\H_{v\overline{\F}}^\infty(U,F)$. 
	\item $T_f\in\overline{\F}(\G_v^\infty(U),F)$.
	\item $f^t\in\overline{\F}(F^*,\H_v^\infty(U))$.
\end{enumerate}
Furthermore, the correspondence $f\mapsto T_f$ is an isometric isomorphism from $(\H_{v\overline{\F}}^\infty(U,F),\left\|\cdot\right\|_v)$ onto $(\overline{\F}(\Gv^\infty(U),F),\left\|\cdot\right\|)$.$\hfill\qed$
\end{proposition}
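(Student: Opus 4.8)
The plan is to establish the three equivalences by transporting the corresponding facts about the linearization $T_f$ across the isometric isomorphism $f\mapsto T_f$ from $\Hv^\infty(U,F)$ onto $\L(\Gv^\infty(U),F)$, exactly as was done in Proposition \ref{prop-18:07} for the ideal $\F$. The only new ingredient is to pass from $\F$ to its norm closure $\overline{\F}$ in the operator-norm topology.

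First I would prove $(i)\Leftrightarrow(ii)$. Suppose $f\in\H_{v\overline{\F}}^\infty(U,F)$, so there is a sequence $(f_n)$ in $\H_{v\F}^\infty(U,F)$ with $\left\|f_n-f\right\|_v\to 0$. By Proposition \ref{prop-18:07}, each $T_{f_n}\in\F(\Gv^\infty(U),F)$, and since $f\mapsto T_f$ is an isometry for $\left\|\cdot\right\|_v$ and the operator norm, $\left\|T_{f_n}-T_f\right\|=\left\|f_n-f\right\|_v\to 0$; hence $T_f\in\overline{\F}(\Gv^\infty(U),F)$. Conversely, if $T_f\in\overline{\F}(\Gv^\infty(U),F)$, pick finite-rank operators $R_n\to T_f$ in operator norm; by the universal property each $R_n=T_{f_n}$ for a unique $f_n\in\Hv^\infty(U,F)$, which lies in $\H_{v\F}^\infty(U,F)$ by Proposition \ref{prop-18:07}, and $\left\|f_n-f\right\|_v=\left\|R_n-T_f\right\|\to 0$, so $f\in\H_{v\overline{\F}}^\infty(U,F)$. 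The isometric isomorphism statement is then immediate: the map $f\mapsto T_f$ restricts to a bijection $\H_{v\overline{\F}}^\infty(U,F)\to\overline{\F}(\Gv^\infty(U),F)$ preserving $\left\|\cdot\right\|_v=\left\|\cdot\right\|$, with surjectivity coming from the converse direction just shown.

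For $(ii)\Leftrightarrow(iii)$ I would argue exactly as in Proposition \ref{prop-18:07}: using that $\overline{\F}$ is a completely symmetric operator ideal \cite[4.4.7]{Pie-80}, we have $T_f\in\overline{\F}(\Gv^\infty(U),F)$ if and only if $(T_f)^*\in\overline{\F}(F^*,\Gv^\infty(U)^*)$, and since $f^t=J_v^{-1}\circ(T_f)^*$ with $J_v^{-1}$ an isometric isomorphism, the latter is equivalent to $f^t\in\overline{\F}(F^*,\Hv^\infty(U))$ by the ideal property of $\overline{\F}$.

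I do not expect any genuine obstacle here; the proposition is a routine corollary, which is why the excerpt states it can be proven easily. The only point requiring minor care is the converse of $(i)\Leftrightarrow(ii)$, where one must invoke the universal property of $\Gv^\infty(U)$ to lift each approximating finite-rank operator $R_n$ back to a weighted holomorphic map $f_n$ of finite-dimensional $v$-rank — but this is exactly the content of Proposition \ref{prop-18:07} together with the surjectivity of $f\mapsto T_f$ onto $\L(\Gv^\infty(U),F)$, so there is nothing further to verify.
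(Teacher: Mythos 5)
Your proof is correct and follows exactly the route the paper intends: it is stated there as an easy consequence of Proposition \ref{prop-18:07} together with the complete symmetry of the ideal $\overline{\F}$ \cite[4.4.7]{Pie-80}, which is precisely how you argue the equivalences $(i)\Leftrightarrow(ii)$ (via the isometry $f\mapsto T_f$ and the universal property) and $(ii)\Leftrightarrow(iii)$ (via $f^t=J_v^{-1}\circ(T_f)^*$). No gaps; nothing further to add.
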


Now, Theorem \ref{linear}, Propositions \ref{ideal comp} and \ref{prop-18:08} provide the following. 

\begin{corollary}\label{proposition: Banach ideal finito1}
$[\H_{v\overline{\F}}^\infty,\left\|\cdot\right\|_v]=[\overline{\F}\circ\H_v^\infty,\left\|\cdot\right\|_{\overline{\F}\circ\H_v^\infty}]$ and, in particular, $[\H_{v\overline{\F}}^\infty,\left\|\cdot\right\|_v]$ is a Banach weighted holomorphic ideal. $\hfill\qed$
\end{corollary}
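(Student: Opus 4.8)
The plan is to obtain this corollary as a purely formal consequence of the linearization dictionary, exactly mirroring how Corollary \ref{proposition: Banach ideal} was derived for $\I=\K_p,\K_{wp},\K_{up}$. The crucial point is that the class $\overline{\F}$ of approximable operators, equipped with the operator norm, is itself a Banach operator ideal (this is classical; see \cite[4.4.7]{Pie-80}), so Theorem \ref{ideal}, Proposition \ref{ideal comp} and the corollary immediately following Theorem \ref{ideal} all apply verbatim with $\I=\overline{\F}$.

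First I would invoke Proposition \ref{prop-18:08}: for $f\in\Hv^\infty(U,F)$ one has $f\in\H_{v\overline{\F}}^\infty(U,F)$ if and only if $T_f\in\overline{\F}(\Gv^\infty(U),F)$, and in that case $f\mapsto T_f$ is an isometric isomorphism onto $\overline{\F}(\Gv^\infty(U),F)$, so that $\|f\|_v=\|T_f\|$. Next, applying Theorem \ref{ideal} to the normed operator ideal $[\overline{\F},\|\cdot\|]$, I would get that $f\in\overline{\F}\circ\Hv^\infty(U,F)$ if and only if $T_f\in\overline{\F}(\Gv^\infty(U),F)$, with $\|f\|_{\overline{\F}\circ\Hv^\infty}=\|T_f\|_{\overline{\F}}=\|T_f\|$. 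Chaining these two equivalences shows that $\H_{v\overline{\F}}^\infty(U,F)=\overline{\F}\circ\Hv^\infty(U,F)$ for every pair $(U,F)$, with the norms $\|\cdot\|_v$ and $\|\cdot\|_{\overline{\F}\circ\Hv^\infty}$ coinciding on this common space; this is precisely the asserted equality $[\H_{v\overline{\F}}^\infty,\|\cdot\|_v]=[\overline{\F}\circ\Hv^\infty,\|\cdot\|_{\overline{\F}\circ\Hv^\infty}]$ of weighted holomorphic ideals. Finally, since $[\overline{\F},\|\cdot\|]$ is a Banach operator ideal, the corollary following Theorem \ref{ideal} gives that $[\overline{\F}\circ\Hv^\infty,\|\cdot\|_{\overline{\F}\circ\Hv^\infty}]$ is a Banach weighted holomorphic ideal, and hence so is $[\H_{v\overline{\F}}^\infty,\|\cdot\|_v]$ by the identification just established.

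I do not expect any genuine obstacle here: all the machinery is already in place, and the proof is a two-line chase through the isometries of Proposition \ref{prop-18:08} and Theorem \ref{ideal}. The only point deserving a word of care is the observation that $\overline{\F}$ with the operator norm really is a (Banach) normed operator ideal --- closed under two-sided composition with bounded operators and complete --- so that the cited results are applicable; this is standard. It is also worth remarking that, unlike Corollary \ref{proposition: Banach ideal}, no genuinely new norm arises: the composition-ideal norm built from the plain operator norm collapses to $\|T_f\|=\|f\|_v$, which is why the statement is phrased with $\|\cdot\|_v$ rather than with a separate symbol.
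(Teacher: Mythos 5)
Your argument is correct and is essentially the paper's own: the paper derives the corollary by exactly this chase through the linearization results, combining Proposition \ref{prop-18:08} with the composition-ideal machinery and the fact that $[\overline{\F},\left\|\cdot\right\|]$ is a Banach operator ideal \cite[4.4.7]{Pie-80}. If anything, your citation of Theorem \ref{ideal} (with $\I=\overline{\F}$) is the more precise reference, since the paper's one-line attribution to Theorem \ref{linear} only literally covers $\I=\K_p,\K_{wp},\K_{up}$; your closing remark that the composition norm collapses to $\left\|T_f\right\|=\left\|f\right\|_v$ is also exactly why the statement is phrased with $\left\|\cdot\right\|_v$.
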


Let us recall that a Banach space $E$ is said to have the \textit{approximation property} if given a compact set $K\subseteq E$ and $\varepsilon>0$, there is an operator $T\in\F(E,E)$ such that $\left\|T(x)-x\right\|<\varepsilon$ for every $x\in K$. On the approximation property in holomorphic function spaces, we refer to Mujica \cite{Muj-91}.

Grothendieck \cite{g} proved that a dual Banach space $E^*$ has the approximation property if and only if given a Banach space $F$, an operator $S\in\K(E,F)$ and $\varepsilon>0$, there is an operator $T\in\F(E,F)$ such that $\left\|T-S\right\|<\varepsilon$. Since $\Hv^\infty(U)$ is a dual space, combining the Grothendieck's result with Proposition \ref{new}, Theorem \ref{linear} and Proposition \ref{prop-18:08}, we next give a necessary and sufficient condition for the space $\Hv^\infty(U)$ have the approximation property.

\begin{corollary}
$\Hv^\infty(U)$ has the approximation property if and only if $\H_{v\K}^\infty(U,F)\subseteq\H_{v\overline{\F}}^\infty(U,F)$ for each complex Banach space $F$. $\hfill\qed$
\end{corollary}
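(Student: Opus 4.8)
The plan is to translate the statement into operator-theoretic language via linearization, apply Grothendieck's characterization of the approximation property for dual spaces, and then translate back. Since $\Hv^\infty(U)$ is a dual space—indeed $\Hv^\infty(U)=\G_v^\infty(U)^*$ via the isometric isomorphism $J_v$—Grothendieck's result applies directly with $E=\G_v^\infty(U)$ and $E^*=\Hv^\infty(U)$: the space $\Hv^\infty(U)$ has the approximation property if and only if for every complex Banach space $F$, every operator $S\in\K(\G_v^\infty(U),F)$ can be approximated in operator norm by operators in $\F(\G_v^\infty(U),F)$, i.e.\ if and only if $\K(\G_v^\infty(U),F)\subseteq\overline{\F}(\G_v^\infty(U),F)$ for every $F$.

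The key step is then to upgrade this operator-level equivalence to the stated mapping-level equivalence using the dictionary provided by the linearization results. First I would recall from Proposition \ref{new} and Theorem \ref{linear} (applied with $\I=\K_\infty=\K$) that $f\mapsto T_f$ is an isometric isomorphism from $(\H_{v\K}^\infty(U,F),\|\cdot\|_v)$ onto $(\K(\G_v^\infty(U),F),\|\cdot\|)$; likewise, Proposition \ref{prop-18:08} gives that $f\mapsto T_f$ is an isometric isomorphism from $(\H_{v\overline{\F}}^\infty(U,F),\|\cdot\|_v)$ onto $(\overline{\F}(\G_v^\infty(U),F),\|\cdot\|)$. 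So the inclusion $\H_{v\K}^\infty(U,F)\subseteq\H_{v\overline{\F}}^\infty(U,F)$ holds if and only if the corresponding inclusion $\K(\G_v^\infty(U),F)\subseteq\overline{\F}(\G_v^\infty(U),F)$ holds, since a compact mapping $f$ lies in $\H_{v\overline{\F}}^\infty(U,F)$ precisely when $T_f\in\overline{\F}(\G_v^\infty(U),F)$.

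Putting these two equivalences together yields the corollary: $\Hv^\infty(U)$ has the approximation property $\iff$ $\K(\G_v^\infty(U),F)\subseteq\overline{\F}(\G_v^\infty(U),F)$ for all $F$ $\iff$ $\H_{v\K}^\infty(U,F)\subseteq\H_{v\overline{\F}}^\infty(U,F)$ for all $F$. The one point requiring a little care is that Grothendieck's criterion as stated quantifies over \emph{all} Banach spaces $F$, not merely complex ones, but since the linearization machinery here is developed for complex Banach spaces and since a real Banach space can be handled through its complexification (or since the approximation property for $\Hv^\infty(U)$ is really being tested against $\G_v^\infty(U)$-valued data, which is already in the complex category), this causes no difficulty; I would simply note that restricting to complex $F$ suffices because $\G_v^\infty(U)$ is a complex space and the finite-rank approximation can be carried out within the complex-linear operators. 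I do not expect any serious obstacle: the proof is essentially a bookkeeping exercise chaining together Grothendieck's theorem with the isometric linearization identifications already established in Theorem \ref{linear} and Proposition \ref{prop-18:08}. The only mild subtlety is making sure the quantifier over $F$ matches on both sides, which the remark above resolves.
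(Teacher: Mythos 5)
Your argument is correct and matches the paper's intended proof exactly: the paper also derives the corollary by combining Grothendieck's characterization of the approximation property for the dual space $\Hv^\infty(U)=\G_v^\infty(U)^*$ with the isometric identifications $\H_{v\K}^\infty(U,F)\cong\K(\G_v^\infty(U),F)$ (Proposition \ref{new} and Theorem \ref{linear}) and $\H_{v\overline{\F}}^\infty(U,F)\cong\overline{\F}(\G_v^\infty(U),F)$ (Proposition \ref{prop-18:08}). Your closing remark on the quantifier over $F$ is a harmless extra precaution and does not change the route.
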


\subsection{Relations with right $p$-nuclear weighted holomorphic mappings}

Let us recall (see \cite{Per-69}) that an operator $T\in\L(E,F)$ is said to be \textit{right $p$-nuclear} if there are sequences $(x_n^{*})\in\ell_{p^*}^{w}(E^{*})$ and $(y_n)\in\ell_p(F)$ such that
$$
T(x)=\sum_{n=1}^\infty x_n^{*}(x)y_n\qquad (x\in E)
$$
and the series converges in $\L(E,F)$. The set of such operators, denoted $\Nu^p(E,F)$, is a Banach space with the norm
$$
\left\|T\right\|_{\Nu^p}=\inf\left\{\|(x_n^{*})\|_{p^{*}}^w\|y_n\|_p\right\},
$$
where the infimum is taken over all representations of $T$ as above. 

A weighted holomorphic variant of this class of operators can be introduced as follows.

\begin{definition}
Given $p\in [1,\infty)$, a mapping $f\in\H_{v}^{\infty}(U,F)$ is said to be \textit{right $p$-nuclear} if there exist sequences $(g_n)$ in $\ell_{p^{*}}^{w}(\Hv^\infty(U))$ and $(y_n)$ in $\ell_p(F)$ such that $f=\sum_{n=1}^\infty g_n\cdot y_n$ in $\left(\Hv^\infty(U,F), \|\cdot\|_v\right)$. We set 
$$
\left\|f\right\|_{\H^\infty_{v\Nu_p}}=\inf\left\{\|(g_n)\|_{p^{*}}^w\|(y_n)\|_p\right\},
$$
with the infimum taken over all right $p$-nuclear weighted holomorphic representations of $f$ as above. Let $\H^\infty_{v\Nu^p}(U,F)$ denote the set of all right $p$-nuclear weighted holomorphic mappings from $U$ into $F$. 
\end{definition}

We now linearize right $p$-nuclear weighted holomorphic mappings. 

\begin{proposition} \label{teorpnu}
Let $p\in [1,\infty)$ and $f\in\Hv^\infty(U,F)$. The following conditions are equivalent:
\begin{enumerate}
	\item $f\in\H^\infty_{v\Nu^p}(U,F)$. 
	\item $T_f\in\Nu^p(\G_v^\infty(U),F)$. 
\end{enumerate}
In this case, we have $\left\|f\right\|_{\H^\infty_{v\Nu^p}}=\left\|T_f\right\|_{\Nu^p}$. Furthermore, the mapping $f\mapsto T_f$ is an isometric isomorphism from $\left(\H^\infty_{v\Nu^p}(U,F), \left\|\cdot\right\|_{\H^\infty_{v\Nu^p}}\right)$ onto $\left(\Nu^p(\G_v^\infty(U),F),\left\|\cdot\right\|_{\Nu^p}\right)$.
\end{proposition}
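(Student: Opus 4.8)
The plan is to run the same linearization scheme used for Theorem~\ref{ideal} and Theorem~\ref{linear}, transporting a right $p$-nuclear representation of $f$ to one of $T_f$ and back through the isometric isomorphism $J_v\colon\Hv^\infty(U)\to\G_v^\infty(U)^*$. The pivotal point is the correspondence between representations: a right $p$-nuclear representation $f=\sum_{n}g_n\cdot y_n$, with $(g_n)\in\ell_{p^*}^w(\Hv^\infty(U))$ and $(y_n)\in\ell_p(F)$, matches the formula $T_f(\phi)=\sum_{n}J_v(g_n)(\phi)\,y_n$ for $\phi\in\G_v^\infty(U)$, and conversely. Since $J_v$ is an isometric isomorphism, the map $(g_n)\mapsto(J_v(g_n))$ is an isometric isomorphism from $\ell_{p^*}^w(\Hv^\infty(U))$ onto $\ell_{p^*}^w(\G_v^\infty(U)^*)$; in particular $\|(g_n)\|_{p^*}^w=\|(J_v(g_n))\|_{p^*}^w$, which is what makes the correspondence isometric.

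For $(i)\Rightarrow(ii)$, starting from $f=\sum_n g_n\cdot y_n$ I would set $\psi_n:=J_v(g_n)$ and check the identity $T_f(\phi)=\sum_n\psi_n(\phi)y_n$ first on atoms $v(x)\delta_x$ --- where it collapses to $v(x)f(x)=\sum_n v(x)g_n(x)y_n$ --- then extend it by linearity to $\lin(\At_{\G_v^\infty(U)})$ and by density and continuity to all of $\G_v^\infty(U)$, using that $\overline{\lin}(\At_{\G_v^\infty(U)})=\G_v^\infty(U)$. The continuity, as well as the convergence of the series in $\L(\G_v^\infty(U),F)$ required by the definition of $\Nu^p$, both follow from the H\"older-type estimate $\|\sum_{n\geq m}\psi_n(\phi)y_n\|\leq\|(\psi_n)_{n\geq m}\|_{p^*}^w\,\|(y_n)_{n\geq m}\|_p$ valid for $\phi\in B_{\G_v^\infty(U)}$. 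This shows $T_f\in\Nu^p(\G_v^\infty(U),F)$ with $\|T_f\|_{\Nu^p}\leq\|(\psi_n)\|_{p^*}^w\|(y_n)\|_p=\|(g_n)\|_{p^*}^w\|(y_n)\|_p$; taking the infimum over representations gives $\|T_f\|_{\Nu^p}\leq\|f\|_{\H^\infty_{v\Nu^p}}$.

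For $(ii)\Rightarrow(i)$, from a right $p$-nuclear representation $T_f(\phi)=\sum_n\psi_n(\phi)y_n$ I would put $g_n:=J_v^{-1}(\psi_n)\in\Hv^\infty(U)$, so $(g_n)\in\ell_{p^*}^w(\Hv^\infty(U))$, and evaluate at $\phi=\delta_x$ to get $f(x)=T_f(\delta_x)=\sum_n g_n(x)y_n$, i.e.\ $f=\sum_n g_n\cdot y_n$ pointwise. The same H\"older estimate --- now using $v(x)g_n(x)=\psi_n(v(x)\delta_x)$ together with $v(x)\delta_x\in B_{\G_v^\infty(U)}$ --- shows this series converges in $(\Hv^\infty(U,F),\|\cdot\|_v)$, so it is an admissible right $p$-nuclear representation and $f\in\H^\infty_{v\Nu^p}(U,F)$ with $\|f\|_{\H^\infty_{v\Nu^p}}\leq\|(g_n)\|_{p^*}^w\|(y_n)\|_p=\|(\psi_n)\|_{p^*}^w\|(y_n)\|_p$; the infimum over representations of $T_f$ gives $\|f\|_{\H^\infty_{v\Nu^p}}\leq\|T_f\|_{\Nu^p}$, and combined with the previous inequality the norm equality follows. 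Finally, since $f\mapsto T_f$ is already a linear bijection from $\Hv^\infty(U,F)$ onto $\L(\G_v^\infty(U),F)$ by the universal property of $\G_v^\infty(U)$, the two implications show it restricts to an isometric bijection, hence an isometric isomorphism, from $\H^\infty_{v\Nu^p}(U,F)$ onto $\Nu^p(\G_v^\infty(U),F)$. I expect the only genuine (though mild) obstacle to be the careful bookkeeping of series convergence in the two relevant norms and the propagation of the atomic identity from $\lin(\At_{\G_v^\infty(U)})$ to $\G_v^\infty(U)$; everything else is the routine transfer along $J_v$.
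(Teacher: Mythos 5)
Your proposal is correct and follows essentially the same route as the paper: both directions transfer a right $p$-nuclear representation between $f$ and $T_f$ through the isometry $J_v$ (your $\psi_n=J_v(g_n)$ are exactly the paper's $T_{g_n}$), use the H\"older-type tail estimate $\bigl\|\sum_{n\geq m}\psi_n(\phi)y_n\bigr\|\leq\|(\psi_n)\|^w_{p^*}\|(y_n)_{n\geq m}\|_p$ to get convergence in the relevant norm, and obtain the isometric identification by taking infima over representations on both sides. The only cosmetic difference is that you establish $T_f=\sum_n\psi_n\cdot y_n$ by checking it on atoms and extending by density of $\lin(\At_{\G^\infty_v(U)})$, whereas the paper invokes the uniqueness in the universal property of $\G^\infty_v(U)$ from the identity $f=\left(\sum_n T_{g_n}\cdot y_n\right)\circ\Delta_v$ — the same argument in substance, and your tail estimate is, if anything, the more careful justification of the operator-norm convergence.
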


\begin{proof}
$(i)\Rightarrow (ii)$: Assume that $f\in\H^\infty_{v\Nu^p}(U,F)$ and let $\sum_{n= 1}^{\infty}g_n\cdot y_n$ be a right $p$-nuclear weighted holomorphic representation of $f$. Take the unique operator $T_f\in\L(\Gv^\infty(U),F)$ such that $T_f\circ\Delta_v=f$. Similarly, for each $n\in\mathbb{N}$, take $T_{g_n}\in\Gv^\infty(U)^*$ with $||T_{g_n}||=\left\|g_n\right\|_v$ such that $T_{g_n}\circ\Delta_v=g_n$. Notice that $\sum_{n=1}^{\infty}T_{g_n}\cdot y_n\in\L(\Gv^\infty(U),F)$ since 
$$
\sum_{k=1}^m\left\|T_{g_k}\cdot y_k\right\|=\sum_{k=1}^m\left\|T_{g_k}\right\|\left\|y_k\right\|=\sum_{k=1}^m\left\|g_k\right\|_v\left\|y_k\right\|\leq\left\|(g_n)\right\|^w_{p^*}\left\|(y_n)\right\|_{p},
$$
for all $m\in\mathbb{N}$. We can write
$$
f=\sum_{n=1}^\infty g_n\cdot y_n=\sum_{n=1}^\infty (T_{g_n}\circ\Delta_v)\cdot y_n=\left(\sum_{n=1}^\infty T_{g_n}\cdot y_n\right)\circ \Delta_v,
$$
in $(\Hv^\infty(U,F),\left\|\cdot\right\|_v)$. Hence $T_f=\sum_{n=1}^\infty T_{g_n}\cdot y_n$, where $(T_{g_n})\in\ell^w_{p^*}(\Gv^\infty(U)^*)$ with $\left\|(T_{g_n})\right\|^w_{p^*}\leq\left\|(g_n)\right\|^w_{p^*}$. Therefore $T_f\in\Nu^p(\Gv^\infty(U),F)$ with $\left\|T_f\right\|_{\Nu^p}\leq\left\|(g_n)\right\|^w_{p^*} \left\|(y_n)\right\|_p$. Taking infimum over all right $p$-nuclear weighted holomorphic representations of $f$, we deduce that $\left\|T_f\right\|_{\Nu^p}\leq\left\|f\right\|_{\H^\infty_{v\Nu^p}}$.

$(ii)\Rightarrow (i)$: Suppose that $T_f\in\Nu^p(\Gv^\infty(U),F)$ and let $\sum_{n= 1}^{\infty}\phi_n\cdot y_n$ be a right $p$-nuclear representation of $T_f$. For each $n\in\mathbb{N}$, there is a $g_n\in\Hv^\infty(U)$ such that $J_v(g_n)=\phi_n$ with $\left\|g_n\right\|_v=||\phi_n||$. We have 
\begin{align*}
v(x)\left\|\left(f-\sum_{k=1}^ng_k\cdot y_k\right)(x)\right\|&=v(x)\left\|f(x)-\sum_{k=1}^ng_k(x)y_k\right\|\\
&=v(x)\left\|T_f(\Delta_v(x))-\sum_{k=1}^n J_v(g_k)(\Delta_v(x))y_k\right\|\\
&=v(x)\left\|\left(T_f-\sum_{k=1}^n\phi_k\cdot y_k\right)(\Delta_v(x))\right\|\\
&\leq \left\|T_f-\sum_{k=1}^n\phi_k\cdot y_k\right\|v(x)\left\|\Delta_v(x)\right\|\\
&\leq\left\|T_f-\sum_{k=1}^n\phi_k\cdot y_k\right\|
\end{align*}
for all $x\in U$ and $n\in\mathbb{N}$. Taking supremum over all $x\in U$, we obtain
$$
\left\|f-\sum_{k=1}^ng_k\cdot y_k\right\|_v\leq\left\|T_f-\sum_{k=1}^n\phi_k\cdot y_k\right\| ,
$$
for all $n\in\mathbb{N}$. Hence $f=\sum_{n=1}^\infty g_n\cdot y_n$ in $(\Hv^\infty(U,F),\left\|\cdot\right\|_v)$, where $(g_n)\in\ell^w_{p^*}(\Hv^\infty(U))$ with $\left\|(g_n)\right\|^w_{p^*}\leq\left\|(\phi_n)\right\|^w_{p^*}$. So $f\in\H^\infty_{v\Nu^p}(U,F)$ with $\left\|f\right\|_{\H^\infty_{v\Nu^p}}\leq\left\|(\phi_n)\right\|^w_{p^*}\left\|(y_n)\right\|_p$, and therefore $\left\|f\right\|_{\H^\infty_{v\Nu^p}}\leq\left\|T_f\right\|_{\Nu^p}$.

\end{proof}

Combining Proposition \ref{teorpnu} with Theorem \ref{ideal} and Proposition \ref{ideal comp}, we derive the following.

\begin{corollary}\label{proposition: Banach ideal-8}
$[\H_{v\Nu^p}^\infty,\left\|\cdot\right\|_{\H_{v\Nu^p}^\infty}]=[\Nu^p\circ\H_v^\infty,\left\|\cdot\right\|_{\Nu^p\circ\H_v^\infty}]$ and, in particular, $[\H_{v\Nu^p}^\infty,\left\|\cdot\right\|_{\H_{v\Nu^p}^\infty}]$ is a Banach weighted holomorphic ideal for any $p\in [1,\infty)$. $\hfill\qed$
\end{corollary}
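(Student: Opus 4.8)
The plan is to read the statement off from two linearization isometries that are already available: Proposition~\ref{teorpnu} for the class $\H_{v\Nu^p}^\infty$, and Theorem~\ref{ideal}, applied to the Banach operator ideal $\I=\Nu^p$, for the composition ideal $\Nu^p\circ\Hv^\infty$. No new analytic work is needed; the argument is a comparison of the two statements pair by pair $(U,F)$, so the role of the last paragraph below is only bookkeeping.

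First I would fix a complex Banach space $E$, an open set $U\subseteq E$, a complex Banach space $F$, and a mapping $f\in\Hv^\infty(U,F)$. Proposition~\ref{teorpnu} says that $f\in\H_{v\Nu^p}^\infty(U,F)$ if and only if $T_f\in\Nu^p(\Gv^\infty(U),F)$, and in that case $\|f\|_{\H_{v\Nu^p}^\infty}=\|T_f\|_{\Nu^p}$. Theorem~\ref{ideal} (with $\I=\Nu^p$, which is a Banach operator ideal by the discussion preceding the definition of right $p$-nuclear weighted holomorphic mappings, cf.\ \cite{Per-69}) says that $f\in\Nu^p\circ\Hv^\infty(U,F)$ if and only if $T_f\in\Nu^p(\Gv^\infty(U),F)$, and then $\|f\|_{\Nu^p\circ\Hv^\infty}=\|T_f\|_{\Nu^p}$. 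Comparing the two equivalences, $\H_{v\Nu^p}^\infty(U,F)=\Nu^p\circ\Hv^\infty(U,F)$ with $\|f\|_{\H_{v\Nu^p}^\infty}=\|f\|_{\Nu^p\circ\Hv^\infty}$ for every such $f$; since $(U,F)$ was arbitrary, this is precisely the claimed identity $[\H_{v\Nu^p}^\infty,\|\cdot\|_{\H_{v\Nu^p}^\infty}]=[\Nu^p\circ\Hv^\infty,\|\cdot\|_{\Nu^p\circ\Hv^\infty}]$ of weighted holomorphic ideals in the sense of Definition~\ref{def-weighted holomorphic ideal}, the inequality-of-norms convention ``$\leq$'' collapsing to equality because both linearizations are isometric.

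For the ``in particular'' assertion I would note that, by Proposition~\ref{ideal comp}, $[\Nu^p\circ\Hv^\infty,\|\cdot\|_{\Nu^p\circ\Hv^\infty}]$ is a normed weighted holomorphic ideal, and completeness is immediate from Theorem~\ref{ideal}: $f\mapsto T_f$ is an isometric isomorphism onto the Banach space $\Nu^p(\Gv^\infty(U),F)$, so $(\Nu^p\circ\Hv^\infty(U,F),\|\cdot\|_{\Nu^p\circ\Hv^\infty})$ is complete; equivalently one may simply invoke the corollary stated just after Theorem~\ref{ideal}. Transporting this along the isometric identity of the previous paragraph shows that $[\H_{v\Nu^p}^\infty,\|\cdot\|_{\H_{v\Nu^p}^\infty}]$ is a Banach weighted holomorphic ideal. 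The only point that needs a word of care — and it is the closest thing to an obstacle — is checking that the two linearization isometries are natural in $(U,F)$, so that the comparison yields an identity of ideals in the full sense of Definition~\ref{def-weighted holomorphic ideal} (covering every $E$, every open $U\subseteq E$, and every $F$); but this is built into the statements of Proposition~\ref{teorpnu} and Theorem~\ref{ideal}, so no extra argument is required.
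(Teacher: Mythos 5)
Your proposal is correct and matches the paper's own argument, which likewise obtains the identity by comparing the isometric linearization of Proposition~\ref{teorpnu} with Theorem~\ref{ideal} applied to $\I=\Nu^p$, and then invokes Proposition~\ref{ideal comp} (and the corollary following Theorem~\ref{ideal}) for the Banach weighted holomorphic ideal property. No discrepancies to report.
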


We now show that every right $p$-nuclear weighted holomorphic mapping has a relatively $p$-compact $v$-range.

\begin{corollary}
$[\H_{v\Nu^p}^\infty,\left\|\cdot\right\|_{\H_{v\Nu^p}^\infty}]\leq[\H_{v\K_p}^\infty,\left\|\cdot\right\|_{\H_{v\K_p}^\infty}]$ for any $p\in [1,\infty)$.
\end{corollary}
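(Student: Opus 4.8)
The plan is to pass through linearization and reduce to the corresponding inclusion of operator ideals. By Proposition~\ref{teorpnu}, $f\in\H^\infty_{v\Nu^p}(U,F)$ if and only if $T_f\in\Nu^p(\Gv^\infty(U),F)$, with $\|f\|_{\H^\infty_{v\Nu^p}}=\|T_f\|_{\Nu^p}$; and by Theorem~\ref{linear}, $f\in\H^\infty_{v\K_p}(U,F)$ if and only if $T_f\in\K_p(\Gv^\infty(U),F)$, with $\|f\|_{\H^\infty_{v\K_p}}=\|T_f\|_{\K_p}$. So the statement follows at once from the linear containment $[\Nu^p,\|\cdot\|_{\Nu^p}]\leq[\K_p,\|\cdot\|_{\K_p}]$ --- every right $p$-nuclear operator is $p$-compact with $\|T\|_{\K_p}\leq\|T\|_{\Nu^p}$ --- which is a known fact in the circle of ideas of \cite{DelPinSer-10,SinKar-02}.

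For a self-contained argument I would instead work directly from a right $p$-nuclear weighted holomorphic representation $f=\sum_{n=1}^\infty g_n\cdot y_n$, with $(g_n)\in\ell^w_{p^*}(\Hv^\infty(U))$ and $(y_n)\in\ell_p(F)$. Fix $x\in U$ and test $(g_n)$ against the atom $v(x)\delta_x$: since $v(x)\delta_x\in B_{\Hv^\infty(U)^*}$, the scalar sequence $(v(x)g_n(x))_n=\big((v(x)\delta_x)(g_n)\big)_n$ lies in $\|(g_n)\|^w_{p^*}B_{\ell_{p^*}}$. As $v(x)f(x)=\sum_n(v(x)g_n(x))y_n$, this shows $v(x)f(x)\in p\text{-}\conv\big((\|(g_n)\|^w_{p^*}y_n)_n\big)$, and $(\|(g_n)\|^w_{p^*}y_n)_n\in\ell_p(F)$ has norm $\|(g_n)\|^w_{p^*}\|(y_n)\|_p$. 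Letting $x$ range over $U$, $(vf)(U)$ is relatively $p$-compact, so $f\in\H^\infty_{v\K_p}(U,F)$ with $\|f\|_{\H^\infty_{v\K_p}}=m_{\K_p}((vf)(U))\leq\|(g_n)\|^w_{p^*}\|(y_n)\|_p$; taking the infimum over all representations of $f$ gives $\|f\|_{\H^\infty_{v\K_p}}\leq\|f\|_{\H^\infty_{v\Nu^p}}$.

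I expect the only genuinely delicate point to be the endpoint $p=1$: there $p\text{-}\conv$ is built from coefficient sequences in $B_{c_0}$, while the evaluations $(v(x)g_n(x))_n$ are merely bounded. To fix this I would exploit $\|y_n\|\to 0$ to pick scalars $\lambda_n\geq 1$ with $\lambda_n\to\infty$ and $\sum_n\lambda_n\|y_n\|$ arbitrarily close to $\sum_n\|y_n\|$, and replace $y_n$ by $\lambda_n y_n$ and each evaluation by $v(x)g_n(x)/\lambda_n$; the latter now lies (uniformly in $x$) in a fixed multiple of $B_{c_0}$, the former still in $\ell_1(F)$, and letting $\sum_n\lambda_n\|y_n\|\downarrow\sum_n\|y_n\|$ restores the sharp norm estimate. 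The rest is routine bookkeeping with the definitions of $m_{\K_p}$ and of $\|\cdot\|^w_{p^*}$ from Section~\ref{section 1}.
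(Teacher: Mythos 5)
Your first paragraph is exactly the paper's proof: the paper applies Proposition~\ref{teorpnu} and Theorem~\ref{linear} to reduce to the linear containment $\Nu^p\subseteq\K_p$ with $\left\|T\right\|_{\K_p}\leq\left\|T\right\|_{\Nu^p}$, citing \cite[p.~295]{DelPinSer-10} for that fact, so the only difference is that your citation of the linear result is vaguer than the paper's.

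Your second, self-contained argument is a genuinely different route and it is also correct. Testing $(g_n)\in\ell^w_{p^*}(\Hv^\infty(U))$ against the atoms $v(x)\delta_x\in B_{\Hv^\infty(U)^*}$ gives $\left\|\left(v(x)g_n(x)\right)_n\right\|_{p^*}\leq\left\|(g_n)\right\|^w_{p^*}$ uniformly in $x$, and since convergence in $\left\|\cdot\right\|_v$ implies pointwise convergence, $v(x)f(x)=\sum_n v(x)g_n(x)\,y_n$ indeed exhibits $(vf)(U)\subseteq p\text{-}\conv\bigl((\left\|(g_n)\right\|^w_{p^*}y_n)_n\bigr)$, yielding $m_{\K_p}((vf)(U))\leq\left\|(g_n)\right\|^w_{p^*}\left\|(y_n)\right\|_p$ and the sharp inequality after taking infima. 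You also correctly isolate the only delicate point, $p=1$, where the coefficients must lie in $B_{c_0}$ rather than merely be bounded; your dilation trick (replace $y_n$ by $\lambda_n y_n$ with $\lambda_n\uparrow\infty$ and $\sum_n\lambda_n\left\|y_n\right\|$ close to $\sum_n\left\|y_n\right\|$, divide the coefficients by $\lambda_n$) is the standard remedy and works since $\left\|y_n\right\|\to0$ for $(y_n)\in\ell_1(F)$. What this buys is independence from the linear black box: you essentially re-prove the inclusion $\Nu^p\subseteq\K_p$ in this concrete setting at the level of atoms, bypassing linearization altogether, at the cost of a somewhat longer argument (and of having to handle the $p=1$ endpoint by hand), whereas the paper's proof is three lines but leans on Theorem~\ref{linear}, Proposition~\ref{teorpnu} and the cited linear fact.
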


\begin{proof}
Let  $f\in\H^\infty_{v\Nu^p}(U,F)$. By Proposition \ref{teorpnu}, we have $T_f\in\Nu^p(\Gv^\infty(U),F)$ with $\left\|T_f\right\|_{\Nu^p}=\left\|f\right\|_{\H^\infty_{v\Nu^p}}$. It follows that $T_f\in\K_p(\Gv^\infty(U),F)$ with $\left\|T_f\right\|_{\K_p}\leq\left\|T_f\right\|_{\Nu^p}$ (see \cite[p. 295]{DelPinSer-10}). Hence $f\in\H^\infty_{v\K_p}(U,F)$ with $\left\|f\right\|_{\H^\infty_{v\K_p}}\leq\left\|f\right\|_{\H^\infty_{v\Nu^p}}$ by Theorem \ref{linear}.
\end{proof}

\textbf{Conflict of interest}. The authors have no relevant financial or non-financial interests to disclose.

\end{document}